\numberwithin{equation}{section}
\theoremstyle{plain}
\newtheorem{theorem}{Theorem}
\numberwithin{theorem}{section}
\newtheorem{lemma}{Lemma}       	
\numberwithin{lemma}{section}
\newtheorem{proposition}{Proposition}
\numberwithin{proposition}{section}
\newtheorem{corollary}{Corollary}
\numberwithin{corollary}{section}
\theoremstyle{definition}
\newtheorem{definition}{Definition}
\numberwithin{definition}{section}
\numberwithin{example}{section}
\newtheorem{remark}{Remark}
\numberwithin{remark}{section}
\numberwithin{assumption}{section}
\newcommand\Eb{\mathds{E}}
\newcommand\Rb{\mathds{R}}
\newcommand\Ac{\mathscr{A}}
\newcommand\Dc{\mathscr{D}}
\newcommand\Lc{\mathscr{L}}
\newcommand\Pc{\mathscr{P}}
\newcommand\eps{\varepsilon}
\newcommand\gam{\gamma}
\newcommand\lam{\lambda}
\newcommand\del{\delta}
\newcommand\yo{\overline{y}}
\newcommand\Wh{\widehat{W}}
\newcommand\wh{\widehat{w}}
\newcommand\Vh{\widehat{V}}
\newcommand\Act{\widetilde{\Ac}}
\newcommand\Ct{\widetilde{C}}
\newcommand\ct{\tilde{c}}
\newcommand\htt{\widetilde{h}}
\newcommand\rt{\widetilde{r}}
\newcommand\Xt{\widetilde{X}}
\newcommand\Zt{\widetilde{Z}}
\newcommand\ut{\widetilde{u}}
\newcommand\Vt{\widetilde{V}}
\newcommand\wt{\tilde{w}}
\newcommand\deltat{\tilde{\del}}
\newcommand\dd{\mathrm{d}}
\newcommand\ee{\mathrm{e}}
\newcommand{\cs}{{c^*}}
\newcommand{\cso}{c^*}
\newcommand\xu{x_{s}}
\newcommand{\xs}{{x_{\alpha}}}
\newcommand{\xa}{{x_{\alpha}}}
\newcommand{\xH}{x_\text{h}}
\newcommand{\ys}{{y_{\alpha}}}
\newcommand \al {\alpha}
\newcommand \vp {v}
\newcommand \vpt {v}
\providecommand{\keywords}[1]{\textbf{\textit{Keywords: }} #1}
\begin{document}

\title{Optimal Consumption under a Habit-Formation\\Constraint: the Deterministic Case}

\author{
	Bahman Angoshtari
	\thanks{Department of Mathematics, University of Miami.  \textbf{e-mail}: \url{b.angoshtari@math.miami.edu}}
	\and
	Erhan Bayraktar
	\thanks{Department of Mathematics, University of Michigan.  \textbf{e-mail}: \url{erhan@umich.edu} E. Bayraktar is supported in part by the Susan M. Smith Professorship.}
	\and
	Virginia R. Young
	\thanks{Department of Mathematics, University of Michigan.  \textbf{e-mail}: \url{vryoung@umich.edu} V. R. Young is supported in part by the Cecil J. and Ethel M. Nesbitt Professorship.}
}

\date{This version: \today}

\maketitle

\begin{abstract}
	We formulate and solve a deterministic optimal consumption problem to maximize the discounted CRRA utility of an individual's consumption-to-habit process assuming she only invests in a riskless market and that she is unwilling to consume at a rate below a certain proportion $\alpha\in(0,1]$ of her consumption habit. Increasing $\alpha$, increases the degree of addictiveness of habit formation, with $\alpha=0$ (respectively, $\al=1$) corresponding to non-addictive (respectively, completely addictive) model. 	
	We derive the optimal consumption policies explicitly in terms of the solution of a nonlinear free-boundary problem, which we analyze in detail. Impatient individuals (or, equivalently, those with more addictive habits)  always consume above the minimum rate; thus, they eventually attain the minimum wealth-to-habit ratio.  Patient individuals (or, equivalently, those with less addictive habits) consume at the minimum rate if their wealth-to-habit ratio is below a threshold, and above it otherwise.  By consuming patiently, these individuals maintain a wealth-to-habit ratio that is greater than the minimum acceptable level. Additionally, we prove that the optimal consumption path is hump-shaped if the initial wealth-to-habit ratio is either: (1) larger than a high threshold; or (2) below a low threshold and the agent is more risk seeking (that is, less risk averse). Thus, we provide a simple explanation for the consumption hump observed by various empirical studies.
\end{abstract}

\keywords{Addictive habit formation, consumption hump, optimal consumption, average past consumption, optimal control, free-boundary problem.}

%
%

\section{Introduction}

It has been long known that the classical time-separable preferences of \cite{Merton1969} is at odds with empirical observations; see \cite{GrossmanShiller1980}, \cite{mehra1985equity}, and the references therein.  To address these shortcomings, researchers developed model of habit-formation models in the late 1960's; see, for example, \cite{Pollak1970} and \cite{RyderHeal1973}.  In these models, time-inseparability is introduced through an explicit dependence of the consumption utility function on the consumption habit, which is defined as a running average of past consumption. \cite{sundaresan1989} solved an infinite-horizon, optimal investment and consumption problem assuming a habit-formation power utility function and a geometric Brownian motion price process; he demonstrated the smoothness of the optimal consumption process relative to that of \cite{Merton1969}.  \cite{Constantinides1990} generalized the specification of the consumption habit process and provided an explanation for the equity premium puzzle. Under a more general habit-formation utility and market model, \cite{DetempleZapatero1991} and  \cite{DetempleZapatero1992} derive conditions under which optimal policies exist and characterize the optimal consumption policy in terms of an unknown stochastic process that arises from the martingale representation theorem.

The above studies largely assume \emph{addictive} habit formation, in the sense that they explicitly or implicitly assume that the individual is unwilling to consume at a rate below her consumption habit.\footnote{Our definition of addictive and nonaddictive models follows \cite{DetempleZapatero1991} (see Example 3.2 on page 1639) and \cite{DetempleKaratzas2003} (see top of page 266).}  \cite{DetempleKaratzas2003} adapted a habit-formation specification that allows for \emph{non-addictive} habit where consumption can fall below the individual's consumption habit. For more recent studies on continuous-time optimal consumption models with habit-formation preferences see \cite{MUNK2008}, \cite{EnglezosKaratzas2009}, \cite{Muraviev2011}, \cite{2015Yu}, and \cite{KraftMunkSeirfriedWagner2017}, among others.

We consider an infinite-horizon, optimal consumption problem for an individual who forms a consumption habit. The novelty of our approach is in introducing a consumption habit-formation constraint that controls the level of addictiveness of the habit-formation mechanism. In particular, we assume that the individual is unwilling to consume at a rate that is below a certain proportion $\al\in(0,1]$ of her consumption habit. Setting $\al=0$ (resp.\ $\al=1$) leads to a non-addictive (resp.\ addictive) habit formation. Choosing $\al\in(0,1)$ leads to partially addictive models, with the level of addictiveness increasing in $\al$. In contrast to the existing habit-formation literature, our constraint cannot be incorporated in the objective function through infinite marginal utility, and alters the analysis of the corresponding optimal control problem.

We assume the individual funds her consumption solely through risk-free investment.  To avoid bankruptcy, we show that the wealth-to-habit ratio must always be above a certain ``\emph{safe level}'' $\xu$. We, then, formulate and solve a deterministic control problem to maximize the discounted CRRA utility of the consumption-to-habit process. We show that there exists a threshold $\xa$ such that if the ratio of wealth-to-habit is above (resp.\ below) $\xa$, it is optimal to consume at a rate greater than (resp.\ equal to) the minimum acceptable rate imposed by the habit-formation constraint. Furthermore, the individual optimally consumes in such a way that her wealth-to-habit ratio attains a specific value. We find a significant difference between impatient individuals (those whose utility discount rate is above a certain threshold) and patient individuals (those with utility discount rate below the said threshold). Impatient individuals always consume above the minimum rate (that is, $\xa = \xu$) and, thereby, eventually attain the minimum wealth-to-habit ratio $\xu$, while patient individuals might consume at the minimum rate (that is, $\xa > \xu$) and, thereby, attain a wealth-to-habit ratio greater than the minimum acceptable level. This patient vs.\ impatient dichotomy can also be interpreted as high-addictive habits vs.\ low-addictive habits. In particular, an impatient (resp.\ patient) individual has an $\al$ that is above (resp.\ below) a certain threshold, and thus has a more (resp.\ less) addictive habit.  On the technical side, we obtain explicit results in terms of the solution of a nonlinear free-boundary problem, which we analyze in detail.

Various empirical studies indicate that consumption spending of individuals usually have a hump-shaped pattern, with spending typically increasing until the age of about 50 and then decreasing; see \cite{Thurow1969} for an early study and \cite{fernandezVillaverdeKrueger2007} for a more recent one. In a recent article, \cite{KraftMunkSeirfriedWagner2017} provided a theoretical justification for the consumption hump through an agent's habit formation. They considered a finite-horizon, optimal consumption model with addictive habit formation, in which the agent only invests in a risk-free market and with the objective of maximizing the discounted CRRA utility of the difference between consumption and the consumption habit. They derived the optimal consumption policy in closed form and provided sufficient conditions for the presence of a consumption hump in the asymptotic case of large investment horizon.

We compliment the study of \cite{KraftMunkSeirfriedWagner2017} by providing necessary and sufficient conditions for the presence of a consumption hump in our model; see Proposition \ref{prop:inverseU} below.  As in their paper, we show that a consumption hump can exist only if the individual's subjective utility discount rate is higher than the risk-free rate. We find that a consumption hump exists if the individuals initial wealth-to-habit is higher than a specific threshold, which we characterize as the solution of a certain algebraic equation (see \eqref{eq:xH} below).  Additionally, we find that a consumption hump can occur for individuals with low wealth-to-habit ratio (of around the level $\xa$ mentioned above), but only if their risk aversion rate is low (more specifically, their risk-aversion must be below that of a log-utility investor). Thus, our justification for a consumption hump is either: (1) excess initial wealth (relative to habit); or (2) lack of wealth and low risk aversion. Our first condition (with high wealth-to-consumption ratio) is similar to the condition provided by \cite{KraftMunkSeirfriedWagner2017}, while the second scenario (with low wealth-to-consumption ratio and risk aversion) is new.  

To the best of our knowledge, our paper is the first that incorporates the degree of addictiveness of habit formation via an admissibility constraint that cannot be incorporated in the objective function through infinite marginal utility.  It should be mentioned that there is a related literature on optimal consumption models with ratcheting and drawdown constraints; see, \cite{Dybvig1995}, \cite{ElieTouzi2008}, \cite{jeon2018portfolio}, \cite{2019Roche},  \cite{AngoshtariBayraktarYoung2019}, \cite{AlbrecherAzcueMuler2020}, and  \cite{AlbrecherAzcueMuler2022} among others.  In these studies, the individual is forbidden to consume below a proportion $\al$ of the running maximum of her past consumption. In parallel to addictive and non-addictive habit formation, the case $\al=1$ corresponds to the ratcheting constraint, while $\al\in(0,1)$ represents the drawdown constraint. There is, however, significant differences between the above studies and ours. Indeed, in contrast to habit formation based on average past consumption, drawdown and ratcheting constraints represent a severe form of habit formation for which the effect of past consumption does not ``fade away'' with time, and the habit process cannot be reduced by lowering the rate of consumption.  Furthermore, drawdown and ratcheting constraints lead to singular control, while our setting leads to regular control.

In a companion article \cite{AngoshtariBayraktarYoung2021}, we extend our model to the case when the agent invests in a risky asset as well as the risk-free asset, which leads to a stochastic control problem. The results presented herein for the deterministic case is not a special case of our other paper, however.  On the technical side, the analysis of the stochastic control problem relies on randomness of the model and degenerates once the risky asset is removed. Furthermore, the results presented here rely on analysis of a single ODE, while the stochastic case reduces to a coupled system of first-order ODEs with a free boundary, whose analysis is much more intricate. Thus, our deterministic model in the current paper is more tractable and amenable to extensions such as equilibrium modeling. On the economic side, we don't see structural differences between patient and impatient individual in the stochastic case, in the sense that, for the optimal consumption policy, we always have $\xa>\xu$ (that is, the individual consumes patiently) regardless of the value of the utility discount rate. Finally, our goal in the current paper is to explain the consumption hump, while our stochastic model explains the equity premium puzzle.

The rest of the paper is organized as follows. In Section \ref{sec:2}, we introduce the consumption habit process, derive its basic properties, and define our optimal consumption problem. In Section \ref{sec:3}, we formulate the Hamilton-Jacobi-Bellman (HJB) free-boundary-problem and solve it semi-explicitly by applying the Legendre transform. This section includes the main result of the paper, namely, Theorem \ref{thm:VF-riskless}, in which we verify that the solution of the HJB free-boundary-problem yields the value function and the optimal consumption policy. Furthermore, Proposition \ref{prop:inverseU} provides necessary and sufficient conditions for the presence of a consumption hump. In Subsection \ref{sub:Log}, we derive the optimal policy for logarithmic utility and, in Subsection \ref{sub:Income}, we investigate the effect of adding income at a constant rate. In Section \ref{sec:4}, we illustrate the optimal consumption and wealth process and their sensitivity to some of the model parameters through several numerical examples. We conclude in Section \ref{sec:5}. Longer proofs are included in the appendices.

%
%

\section{Problem formulation}\label{sec:2}

We consider an individual who invests in a riskless asset with a fixed interest rate $r>0$ and who consumes in order to maximize her utility of lifetime consumption. Let $C(t)\ge 0$ denote the individual's consumption rate at time $t\ge0$, such that $\int_0^t C(u)\dd u$ is the total consumption over the time interval $[0,t]$. Then, her wealth process $W = \{W(t)\}_{t\ge0}$ satisfies
\begin{align}\label{eq:wealth}
	\frac{\dd W(t)}{\dd t} &= r\,W(t) - C(t),
\end{align}
for $t\ge0$, with $W(0)=w>0$.

For a given consumption process $C = \{C(t)\}_{t \ge 0}$, we define the individual's \emph{habit process} (that is, consumption habit) as the process $Z = \{Z(t)\}_{t \ge 0}$ given by
\begin{align}\label{eq:Z}
	Z(t) = \ee^{-\rho\,t}\left(z + \int_0^{t} \rho\,\ee^{\rho\,u} C(u) \dd u\right);\quad t\ge0,
\end{align}
which has the following equivalent differential form,
\begin{align}\label{eq:Z2}
	\begin{cases}
		\frac{\dd Z(t)}{\dd t} = -\rho\big(Z(t) - C(t)\big);\quad t\ge0,\\
		Z(0)=z.
	\end{cases}
\end{align}
Here, $\rho>0$ is a constant, and $z>0$ represents the initial consumption habit of the individual.  The parameter $\rho$ determines how much current habit is influenced by the recent rate of consumption relative to the consumption rate farther in the past.  As $\rho$ increases, more weight is given to recent consumption.  In the limiting cases, $\rho = 0$ implies $Z(t) = z$ for all $t \ge 0$, and $\rho = \infty$ implies $Z(t) = C(t)$ for all $t \ge 0$.

For $t>0$, the consumption habit $Z(t)$ given by \eqref{eq:Z} is the exponentially-weighted moving average of past consumption $C(s)$, $s<t$. To see this, assume the individual lived (and consumed) over the time period $(-\infty, t)$. Let $z$ be the exponentially-weighted average of her consumption rate before time zero, that is, $z = \int_{-\infty}^0 \rho\,\ee^{\rho u} C(u) \dd u$.
(Note that $\int_{-\infty}^0 \rho \, \ee^{\rho u} \dd u = 1$.)  By substituting for $z$ in \eqref{eq:Z}, we obtain
\begin{align}
	Z(t) &= \int_{-\infty}^0 \rho\,\ee^{-\rho(t-u)} C(u)\dd u + \int_0^{t} \rho\,\ee^{-\rho(t-u)} C(u)\dd u\\
	&= \int_{-\infty}^{t} \rho\,\ee^{-\rho(t-u)} C(u)\dd u,
\end{align}
with $\int_{-\infty}^{t} \rho\,\ee^{-\rho(t-u)} \dd u = 1$.
Thus, $Z(t)$ is the exponentially-weighted moving average of $C(s)$, $s<t$, as claimed.

We consider a \emph{consumption habit formation} for the individual by assuming that, at any time $t\ge0$, she is unwilling to consume at a rate that is below a certain proportion of her  habit $Z(t)$. In particular, we impose the following constraint on the individual's consumption process
\begin{align}\label{eq:Habit}
	C(t) \ge \al\,Z(t);\quad t\ge0,
\end{align}
in which $\al \in (0,1]$ is a constant that measures the individual's tolerance for her current consumption to drop below her habit.  The larger the value of $\al$, the less tolerant the individual is in allowing her current consumption to fall below her habit. Note that the consumption habit process $\{Z(t)\}_{t\ge0}$, depends on $z$ and on the consumption process $\{C(t)\}_{t\ge0}$. To ease the notational burden, however, we write $Z(t)$ instead of the more accurate notation $Z_{z,C(\cdot)}(t)$. 

We assume that the individual consumes in such a way to avoid bankruptcy.  The following lemma provides the corresponding necessary and sufficient condition, namely, that the wealth-to-habit ratio must be above a ``\emph{safe level}'' $\xu$ given by
\begin{align}\label{eq:xu}
	\xu=\xu(\al):=\frac{\al}{r+\rho(1-\al)}\,,
\end{align}
for $\al\in[0,1]$. Note that $\xu(\al)$ is strictly increasing in $\al$, $\xu(0)=0$, and $\xu(1)=1/r$.  

\begin{lemma}\label{lem:NoRuin}
	Assume that $C:\Rb_+\to \Rb_+$ is a measurable function satisfying \eqref{eq:Habit}, in which $\{Z(t)\}_{t\ge0}$ is given by \eqref{eq:Z}. Define the wealth process $\{W(t)\}_{t\ge0}$ by \eqref{eq:wealth}. Then, $W(t)>0$ for all $t\ge0$ if and only if
	\begin{align}\label{eq:NoRuin}
		\frac{W(t)}{Z(t)} \ge \xu,
	\end{align}
	for all $t\ge0$.
\end{lemma}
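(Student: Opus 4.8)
The plan is to reduce the two-dimensional dynamics $(W,Z)$ to a scalar comparison by studying the wealth-to-habit ratio $X(t):=W(t)/Z(t)$. Two preliminary observations make the ``if'' direction and the scalar reduction legitimate. First, $Z(t)>0$ for all $t\ge0$: from \eqref{eq:Z}, $Z(t)\ge z\,\ee^{-\rho t}>0$ since $z>0$ and $C\ge0$. Second, $\xu>0$, because $\al\in(0,1]$, $r>0$, and $\rho>0$ force the denominator in \eqref{eq:xu} to be positive. Consequently $W(t)>0\iff X(t)>0$, and the ``if'' implication is immediate: if $X(t)\ge\xu$ for all $t$, then $W(t)=X(t)Z(t)\ge\xu\,Z(t)>0$.

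For the ``only if'' direction, I would compute $\dot X$ from \eqref{eq:wealth} and \eqref{eq:Z2} via the quotient rule. Writing $c(t):=C(t)/Z(t)$, which satisfies $c(t)\ge\al$ by \eqref{eq:Habit}, a short computation gives
\[
  \dot X(t)=\big(r+\rho-\rho\,c(t)\big)X(t)-c(t).
\]
For fixed $X>0$ the right-hand side is strictly decreasing in $c$ (its $c$-derivative is $-\rho X-1<0$), so the constraint $c\ge\al$ yields the pointwise bound $\dot X\le\big(r+\rho(1-\al)\big)X-\al=a\,(X-\xu)$, where $a:=r+\rho(1-\al)>0$ and $\xu=\al/a$ as in \eqref{eq:xu}. (This incidentally explains the terminology ``safe level'': at $X=\xu$ with minimal consumption $c=\al$ one has $\dot X=0$.) Under the hypothesis $W(t)>0$ for all $t$, we have $X(t)>0$ for all $t$, so this differential inequality holds almost everywhere on all of $[0,\infty)$.

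The comparison step is then a one-line Gr\"onwall argument: $\frac{\dd}{\dd t}\big(\ee^{-a t}(X(t)-\xu)\big)=\ee^{-a t}\big(\dot X-a(X-\xu)\big)\le0$ a.e., so $t\mapsto\ee^{-a t}(X(t)-\xu)$ is non-increasing, and hence $X(t)-\xu\le\ee^{a(t-t_0)}(X(t_0)-\xu)$ for all $t\ge t_0\ge0$. If $X(t_0)<\xu$ for some $t_0$, the right-hand side tends to $-\infty$ as $t\to\infty$, which forces $X(t)<0$ for large $t$ and contradicts $W(t)>0$. Hence $X(t_0)\ge\xu$; since $t_0\ge0$ is arbitrary, \eqref{eq:NoRuin} follows.

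The only point requiring genuine care is regularity. With $C$ measurable (and locally integrable, as is implicit in \eqref{eq:wealth}), both $W$ and $Z$ are absolutely continuous; since $Z$ is bounded below by a positive constant on every compact interval, $X=W/Z$ is absolutely continuous with $\dot X$ given a.e. by the quotient rule, which is exactly what legitimizes the differential inequality and the comparison step above. I expect this bookkeeping --- rather than any substantive estimate --- to be the only mild obstacle; the crux of the proof is simply the observation that passing to $X=W/Z$ collapses the problem to a scalar ODE whose worst case for survival is minimal consumption $C=\al Z$.
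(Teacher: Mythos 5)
Your proof is correct, and it takes a genuinely different route from the paper's. The paper works directly with the pair $(W,Z)$: it first proves an auxiliary habit lower bound $Z(t)\ge Z(s)\,\ee^{-\rho(1-\al)(t-s)}$ (Lemma \ref{lem:Z-Lbound}, established via a recursive approximation and monotone convergence), then for the converse direction solves the wealth ODE explicitly under the minimal-consumption policy $C(s)=\al Z(s)$ and argues that this trajectory dominates the wealth under any admissible policy, so that $W(t)<\xu Z(t)$ forces ruin in finite time. You instead collapse the problem to the scalar ratio $X=W/Z$ (the same reduction the paper only introduces later, in \eqref{eq:X-riskless}), observe that the constraint $c\ge\al$ gives the pointwise differential inequality $\dot X\le a(X-\xu)$ with $a=r+\rho(1-\al)$, and conclude by Gr\"onwall that $\ee^{-at}(X(t)-\xu)$ is non-increasing, so $X(t_0)<\xu$ at any time forces $X\to-\infty$. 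Your route buys two things: it dispenses with Lemma \ref{lem:Z-Lbound} entirely (the trivial bound $Z(t)\ge z\,\ee^{-\rho t}>0$ suffices), and it makes explicit the comparison that the paper's phrase ``any consumption and investment policy leads to bankruptcy in finite time'' leaves somewhat implicit. What the paper's approach buys in exchange is constructive content: it exhibits the explicit minimal-consumption wealth path, which shows concretely that an agent at $X=\xu$ can sustain the constraint forever (the interpretation of the safe level, which you recover in passing by noting $\dot X=0$ at $X=\xu$, $c=\al$). Your handling of the regularity bookkeeping (local integrability of $C$, absolute continuity of $W$, $Z$, and hence of $X$ on compacts, with the quotient rule a.e.) is the right level of care and closes the only potential gap.
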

\begin{proof}
	See Appendix \ref{app:NoRuin}.
\end{proof}

We can interpret \eqref{eq:NoRuin} by observing how it changes with respect to the parameters $\al$, $\rho$, and $r$.  First, $\xu$ increases with $\al$, which means that to avoid ruin, the individual's wealth-to-habit ratio needs to be larger with increasing $\al$.  This relationship makes sense because as $\al$ increases, the individual is less tolerant about allowing her current consumption to fall relative to her habit.   Second, $\xu$ decreases with increasing $\rho$, and increasing $\rho$ means that more weight is given to recent consumption in measuring the habit.  Thus, as $\rho$ increases, past consumption has less effect on current consumption via the habit, and the wealth-to-habit ratio does not need to be as large to avoid ruin.  Third, $\xu$ decreases with increasing $r$, and increasing $r$ means that the individual can earn more money in the riskless asset to fund her consumption; thus, it makes sense that increasing $r$ implies that the wealth-to-habit ratio does not need to be as large to avoid ruin.

Inequality \eqref{eq:NoRuin} implies that the highest initial consumption habit that the individual can \emph{afford} with an initial wealth $w$ is $z = w/\xu$.  Equivalently, \eqref{eq:NoRuin} tell us that the minimum initial wealth that the individual \emph{needs} to afford an initial consumption habit of $z$ is $w = \xu\, z$. In other words, \eqref{eq:NoRuin} characterizes \emph{affordable} consumption habits given the individual's wealth.

Note that as $\al \to 0^+$, the requirement for consumption \eqref{eq:Habit} becomes $C(t) \ge 0$, and inequality \eqref{eq:NoRuin} becomes moot, which we expect because this limiting case is the setting considered by \cite{Merton1969}.  It is also worth noting that, in the special case of $\al = 1$, the requirement for consumption \eqref{eq:Habit} becomes $C(t) \ge Z(t)$, and inequality \eqref{eq:NoRuin} becomes $rW(t) \ge Z(t)$, which is consistent with the feasibility condition adapted by \cite{Dybvig1995}, namely, that $rW(t) \ge C(t-)$. Note, also, that although both of the aforementioned studies consider risky investment in addition to the riskless investment, their no-bankruptcy conditions compares with ours because these conditions are derived using arguments that rely solely on riskless investments.

We define the set of \emph{admissible} investment and consumption policies as those that avoid bankruptcy while satisfying the individual's consumption habit-formation constraint.

\begin{definition}\label{def:Admissible0}
	Let $\widetilde{\Ac}(w,z)$ be the set of all measurable functions $C:\Rb_+\to \Rb_+$ such that conditions \eqref{eq:Habit} and \eqref{eq:NoRuin} hold, namely, $C(t) \ge \al Z(t)$, and $W(t)\ge \xu Z(t)$ for all $t\ge0$, in which $W$ and $Z$ are given by \eqref{eq:wealth} and \eqref{eq:Z}, respectively. \qed
\end{definition}

Next, we formulate the individual's lifetime consumption and investment problem as a control problem. For any admissible consumption policy $C\in \Act$, let us introduce the \emph{wealth-to-habit} process
\begin{align}\label{eq:X}
	X(t) := \frac{W(t)}{Z(t)};\quad t\ge0,
\end{align}
and note that, by \eqref{eq:wealth} and \eqref{eq:Z2},
\begin{align}\label{eq:X-riskless}
	X(t) = x + \int_0^t\Big[(r+\rho)X(u) - \big(1+\rho X(u)\big) c(u)\Big]\dd u;\quad t\ge0,
\end{align}
in which we have defined the \emph{consumption-to-habit} process $c = \{c(t)\}_{t \ge 0}$ by $c(t) :=C(t)/Z(t)$. We define the set of \emph{admissible} consumption-to-habit policies as follows.
\begin{definition}\label{def:Admissible2}
	Let $\Ac(x)$ be the set of all measurable functions $c:\Rb_+\to[\al,+\infty)$ such that $X(t) \ge \xu$ for all $t\ge0$, in which $X(t)$ is given by \eqref{eq:X-riskless}.\qed
\end{definition}

As the following proposition states, our two definitions of admissible policies are equivalent in the sense that any admissible consumption policy corresponds to an admissible relative consumption policy and vice versa. Its proof is elementary and, thus, omitted.

\begin{proposition}\label{prop:Admiss}
	Let $w,z > 0$ be the initial wealth and habit, respectively, and define $x:=w/z$. Assume that $C \in \Act(w,z)$ and let $Z$ be given by \eqref{eq:Z}. Then, we have  $c:=C/Z\in\Ac(x)$.
	Conversely, assume that $c\in\Ac(x)$, and let $W$ be the solution of
	\begin{align}
		\begin{cases}\displaystyle
			\frac{\dd W(t)}{\dd t} = W(t)\left(r - \frac{c(t)}{X(t)}\right);\quad t\ge0,\\
			W(0)=w,
		\end{cases}
	\end{align}
	in which $X$ is given by \eqref{eq:X-riskless}. We, then, have $C:= c W / X \in \Act(w,z)$.   \qed
\end{proposition}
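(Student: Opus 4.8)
The statement asserts the equivalence of Definitions \ref{def:Admissible0} and \ref{def:Admissible2}. The proof is a matter of unwinding the relations $X = W/Z$ and $c = C/Z$, and checking that each of the two required inequalities (the habit constraint \eqref{eq:Habit} and the no-ruin constraint \eqref{eq:NoRuin}) transfers between the two normalizations. I would organize it in two directions.

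\emph{Forward direction.} Suppose $C\in\Act(w,z)$. Define $Z$ by \eqref{eq:Z}; since $C\ge0$ is measurable and $\rho,z>0$, the process $Z$ is strictly positive and continuous, so $c:=C/Z$ is a well-defined nonnegative measurable function. Dividing the habit constraint $C(t)\ge\al Z(t)$ by $Z(t)>0$ gives $c(t)\ge\al$, i.e.\ $c:\Rb_+\to[\al,+\infty)$. Next, set $X:=W/Z$. Using \eqref{eq:wealth} and \eqref{eq:Z2} and the quotient rule, $\frac{\dd X}{\dd t} = \frac{\dd}{\dd t}(W/Z) = \frac{W'Z - WZ'}{Z^2} = \frac{(rW-C)Z + \rho(Z-C)W}{Z^2} = (r+\rho)X - (1+\rho X)c$, with $X(0)=x$, which is exactly the integral equation \eqref{eq:X-riskless}; hence this $X$ coincides with the one in Definition \ref{def:Admissible2}. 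Dividing the no-ruin inequality $W(t)\ge\xu Z(t)$ by $Z(t)>0$ yields $X(t)\ge\xu$ for all $t\ge0$, so $c\in\Ac(x)$.

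\emph{Converse direction.} Suppose $c\in\Ac(x)$, and let $X$ solve \eqref{eq:X-riskless}; note $X(t)\ge\xu>0$, so the ODE for $W$ displayed in the proposition has a well-defined, strictly positive, continuous solution $W$ (its coefficient $r-c(t)/X(t)$ is measurable and locally bounded because $X$ is continuous and bounded below by $\xu$). Define $Z:=W/X$, which is then strictly positive; set $C:=cW/X = cZ$. From $c(t)\ge\al$ we immediately get $C(t)\ge\al Z(t)$, i.e.\ \eqref{eq:Habit}. One must check that this $Z$ agrees with the habit process \eqref{eq:Z} generated by $C$ with $Z(0)=z$: differentiating $Z=W/X$ and substituting the ODEs for $W$ and $X$ gives $Z' = \frac{W'X - WX'}{X^2}$; a direct computation reduces this to $-\rho(Z-C)$, and $Z(0)=W(0)/X(0)=w/x=z$, so by uniqueness of the linear ODE \eqref{eq:Z2}, $Z$ is indeed the habit process of $C$. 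Finally, rewrite $W=XZ$: then $X(t)\ge\xu$ gives $W(t)\ge\xu Z(t)$, which is \eqref{eq:NoRuin}; and $W(t)=X(t)Z(t)>0$. Hence $C\in\Act(w,z)$.

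\emph{Main point to be careful about.} There is no real obstacle here — this is why the paper calls the proof elementary and omits it. The only things demanding a moment's attention are (i) confirming that the $X$ built from $W/Z$ in the forward direction actually satisfies the \emph{same} integral equation \eqref{eq:X-riskless} (so that ``$X(t)\ge\xu$'' means the same thing in both definitions), and, symmetrically, (ii) in the converse, confirming that the $Z$ defined as $W/X$ is genuinely the habit process associated with $C=cZ$ via \eqref{eq:Z}–\eqref{eq:Z2}, which follows from uniqueness of solutions to the linear ODE \eqref{eq:Z2}. Positivity and measurability of the auxiliary processes are automatic from the positivity of $r,\rho,z$ and the lower bound $X\ge\xu>0$.
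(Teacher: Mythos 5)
Your proof is correct and is exactly the elementary verification the paper had in mind when it omitted the proof of Proposition \ref{prop:Admiss}: both directions come down to the quotient-rule computations you perform for $X=W/Z$ and $Z=W/X$, together with uniqueness of the solutions of the linear ODEs \eqref{eq:Z2}, \eqref{eq:X-riskless}, and \eqref{eq:wealth} (the last being implicit in your converse step, since $W'=W\big(r-c/X\big)=rW-C$). The only minor wording point is that the coefficient $r-c(t)/X(t)$ is locally integrable rather than locally bounded (as $c$ is merely measurable, with the local integrability needed for \eqref{eq:X-riskless} to make sense), which is all that is required for $W(t)=w\exp\big(\int_0^t (r-c(u)/X(u))\,\dd u\big)$ to be well defined and strictly positive.
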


We assume that the individual values her consumption relative to her habit. In particular, for a given consumption process $C$, the expected utility of her lifetime consumption is given by\footnote{Such a multiplicative habit-formation preference is common in the literature. See, for instance, \cite{Abel1990}. See page 322 of \cite{KraftMunkSeirfriedWagner2017} for a more complete list of references.}
\begin{align}\label{eq:obj}
	\Eb\left(\int_0^{\tau_d} \frac{1}{1-\gam}\left(\frac{C(t)}{Z(t)}\right)^{1-\gam}\, \ee^{-\deltat\,t}\,\dd t\right)
	= \int_0^{+\infty} \frac{1}{1-\gam}\left(\frac{C(t)}{Z(t)}\right)^{1-\gam}\, \ee^{-(\deltat + \lam)\,t}\,\dd t,
\end{align}
in which $\deltat>0$ is the individual's subjective time preference, $\gam>0$ (with $\gam \ne 1$) is her (constant) relative risk aversion, and $\tau_d$ is the random time of her death, which we assume is exponentially distributed with mean $1/\lam > 0$.  
In light of Proposition \ref{prop:Admiss}, the individual's optimal investment-consumption problem is, thus, formulated by the following control problem:
\begin{align}\label{eq:VF-riskless}
	V(x) := \sup_{c(\cdot)\in\Ac(x)} \int_{0}^{+\infty}\ee^{-\del t} \frac{\big(c(t)\big)^{1-\gam}}{1-\gam}\, \dd t;\quad x\ge \xu.
\end{align}
in which $\del = \deltat + \lam$.

\section{The optimal consumption policy}\label{sec:3}

The Hamilton-Jacobi-Bellman (HJB) equation corresponding to the control problem in \eqref{eq:VF-riskless} is as follows
\begin{align}\label{eq:HJB-riskless}
	-\del \vpt(x) + (r+\rho) x \vpt'(x) + \sup_{c\ge\al} \left\{\frac{c^{1-\gam}}{1-\gam} - (1+\rho x)c \vpt'(x)\right\} = 0;\quad x\ge \xu.
\end{align}
In the rest of this section, we construct a classical solution of this differential equation; then, in the proof of Theorem \ref{thm:VF-riskless}, we verify that the constructed solution equals the value function $V$ in \eqref{eq:VF-riskless}. In Subsection \ref{sub:InverseU}, we provide sharp conditions for presence of consumption hump in our model. In Subsection \ref{sub:Log}, we solve the case of logarithmic utility function. In Subsection \ref{sub:Income}, we investigate the effect of adding a constant rate of income to our model. 

To construct a candidate solution, we hypothesize that  the optimal consumption policy has the following form. There exists a critical level of wealth-to-habit ratio $\xs\ge\xu$ such that:
\begin{itemize}
	\item[$(a)$] If $\xu\le X(t)\le\xs$, it is optimal to consume at the minimum rate $c(t) = \al$,
	
	\item[$(b)$] If $X(t)> \xs$, it is optimal to consume more than the minimum rate.
\end{itemize}

Next, we drive a set of conditions by assuming that a solution $v(x)$ of \eqref{eq:HJB-riskless} is consistent with the ansatz $(a)$ and $(b)$ above. Note that the optimizer $c^*$ in \eqref{eq:HJB-riskless} is given by\footnote{Here, we are assuming that $v'(x)>0$, which is verified by Proposition \ref{prop:HJB-riskless}.}
\begin{align}\label{eq:cs-cand-riskless}
	\cso(x) :=
	\begin{cases}
		\displaystyle
		\al;&\quad (1+\rho x)\vpt'(x) \ge \al^{-\gam},\vspace{1ex}\\
		\displaystyle
		\Big((1+\rho x) \vpt'(x) \Big)^{-\frac{1}{\gam}};&\quad 0<(1+\rho x)\vpt'(x)< \al^{-\gam}.
	\end{cases}
\end{align}
For ansatz $(a)$ and $(b)$ to be true, we must have
\begin{align}\label{eq:VE-riskless}
	\begin{cases}
		(1+\rho x)\vpt'(x) \ge \al^{-\gam};&\quad \xu\le x\le \xs,\vspace{1ex}\\
		0<(1+\rho x)\vpt'(x) < \al^{-\gam};&\quad x>\xs.
	\end{cases}
\end{align}
Under these conditions, \eqref{eq:HJB-riskless} becomes the free-boundary problem (FBP)
\begin{align}\label{eq:FBP-riskless}
	\begin{cases}
		\displaystyle
		-\al\left(\frac{x}{\xu}-1\right) \vpt'(x)  + \del \vpt(x) = \frac{\al^{1-\gam}}{1-\gam};
		&\quad \xu\le x \le \xs,\vspace{1ex}\\
		\displaystyle
		- (r+\rho) x \vpt'(x) + \del \vpt(x) = \frac{\gam}{1-\gam}\big((1+\rho x)\vpt'(x)\big)^{1-\frac{1}{\gam}};
		&\quad x > \xs,\vspace{1ex}\\
		(1+\rho\xs)\vpt'(\xs) = \al^{-\gam},
	\end{cases}
\end{align}
in which $\xs\ge \xu$ is an unknown free boundary.

\begin{remark}
	It is possible to directly provide the solution of the FBP \eqref{eq:FBP-riskless}. Here, we have chosen an indirect approach through the Legendre transform \eqref{eq:convexcon} below. We have three reasons for doing this. Firstly, the analysis of \eqref{eq:FBP-riskless} is more natural after applying the Legendre transform, since the verification argument relies on several properties of the solution which are expressed in terms of the derivative of the value function at certain boundaries. For instance, for the case $0<\del<r+\rho(1-\al)$, we have $V'(x_s^+)=+\infty$; while  $V'(x_s)=\al^{-\gam}/(1+\rho\xu)$ if $\del>r+\rho(1-\al)$. By applying the Legendre transform, the derivative $V'(x)=y$ becomes the independent variable and it will be easier to see these conditions. By working directly with FBP \eqref{eq:FBP-riskless}, one still needs to establish these additional properties which will require equivalent argument as those presented for the dual value function below. Secondly, our current arguments are parallel to the arguments in our companion work \cite{AngoshtariBayraktarYoung2021}. Therein, using the Legendre transform is necessary for linearizing the terms involving the second derivative of the value function. Our current presentation makes it easier to compare the two papers. Finally, we have found that the auxiliary ODEs \eqref{eq:y-ODE-impatient} and \eqref{eq:y-FBP} are numerically more stable than their counterparts obtained by directly working with \eqref{eq:FBP-riskless}. Although there are ways to properly deal with the numerical instability of such ODEs, this further highlights the fact that applying the Legendre transform is appropriate here.\qed
\end{remark}

To solve FBP \eqref{eq:FBP-riskless}, we define the convex conjugate $u$ given by
\begin{align}\label{eq:convexcon}
	u(y) := \sup_{x\ge\xu}\big\{v(x)-xy\big\};\quad 0<y\le \yo := \lim_{x\to\xu^+} \vp'(x)\in\Rb\cup\{+\infty\},
\end{align}
in which we have assumed that $\vpt$ is strictly increasing and concave, an assumption that will be verified in Proposition \ref{prop:HJB-riskless}. Assume that $I(\cdot)$ is the inverse of $v'(\cdot)$, that is, $v'\big(I(y)\big) = y$, $0<y\le \yo$. We, then, have
\begin{align}\label{eq:Legendre}
	v\big(I(y)\big) = u(y) - y u'(y),\quad I(y) = -u'(y),\quad\text{and}\quad v''\big(I(y)\big)=-\frac{1}{u''(y)},
\end{align}
for $0<y\le\yo$. By using these relationships and substituting $x = -u'(y)$, the FBP \eqref{eq:FBP-riskless} becomes the following FBP:
\begin{align}
	&\displaystyle\label{eq:FBPDual1-riskless}
	\big(r+\rho(1-\al)-\del\big)y u'(y) + \del u(y) = \frac{\al^{1-\gam}}{1-\gam} -\al y;
	\qquad \ys\le y \le \yo,\vspace{1ex}\\
	&\displaystyle\label{eq:FBPDual2-riskless}
	\big(r+\rho -\del\big)y u'(y) + \del u(y) = \frac{\gam}{1-\gam}\big(y-\rho y u'(y)\big)^{1-\frac{1}{\gam}};
	\qquad 0< y < \ys,\vspace{1ex}\\
	&\displaystyle\label{eq:FBPDual3-riskless}
	\lim_{y\to\yo^{\,-}} u'(y) = -\xu,&\vspace{1ex}\\
	\intertext{and}
	\label{eq:FBPDual4-riskless}
	&\ys-\rho \ys u'(\ys) = \al^{-\gam},
\end{align}
in which $\yo=\lim_{x\to\xu^-}v'(x)$ and $\ys=v'(\xs)$ are unknown free boundaries. Here, we include the possibility of $\yo=+\infty$. Our goal is to find a decreasing and strictly convex $u$ satisfying \eqref{eq:FBPDual1-riskless}--\eqref{eq:FBPDual4-riskless} which, by inverting \eqref{eq:convexcon}, yields an increasing and strictly concave $v$ satisfying \eqref{eq:FBP-riskless}.  Specifically, $v$ is given by $v(x) = u(y) - y u'(y)$, in which $0 < y \le \yo$ uniquely solves $x = -u'(y)$ for $x \ge \xu$.

Note that if $\del=r+\rho(1-\al)$, then, by \eqref{eq:FBPDual1-riskless},
\begin{align}
	u(y) =\frac{\al^{1-\gam}}{\del(1-\gam)} -\frac{\al}{\del} y;
	\qquad \ys< y \le \yo.
\end{align}
However, this $u$ is not strictly convex and contradicts \eqref{eq:convexcon} unless $\ys=\yo$ (meaning that \eqref{eq:FBPDual1-riskless} is moot).  If $\ys=\yo$, then \eqref{eq:FBPDual3-riskless} and \eqref{eq:FBPDual4-riskless} yield $\ys=\al^{-\gam}/(1+\rho\xu)$.  In the following proposition, we show that, if $\del\ge r+\rho(1-\al)$, there exists a decreasing and convex solution of \eqref{eq:FBPDual2-riskless}--\eqref{eq:FBPDual4-riskless} with $\yo = \ys=\al^{-\gam}/(1+\rho\xu)$.

\begin{proposition}\label{prop:u-solution-impatient}
	Assume $\del\ge r+\rho(1-\al)$. Then:
	
	\noindent $(i)$ There is a strictly increasing function $y:(0,\al^{-\gam}]\to\big(0,\al^{-\gam}/(1+\rho\xu)\big]$ satisfying
	\begin{align}\label{eq:y-ODE-impatient}
		\begin{cases}\displaystyle
			y'(\psi) = \frac{\frac{\rho}{r+\rho}\left(\frac{r+\rho-\del}{\rho} -\psi^{-\frac{1}{\gam}}\right)y(\psi)}{y(\psi)-\frac{\del}{r+\rho}\psi};\quad 0<\psi\le\al^{-\gam}, \vspace{0.5em} \\
			y\left(\al^{-\gam}\right) = \dfrac{\al^{-\gam}}{1+\rho\xu}.
		\end{cases}
	\end{align}
	Furthermore, $0<y(\psi)<\del\psi/(r+\rho)$ for $0<\psi<\al^{-\gam}$.
	
	\noindent $(ii)$ A strictly decreasing and strictly convex solution of the FBP \eqref{eq:FBPDual2-riskless}--\eqref{eq:FBPDual4-riskless} is given by $\yo=\ys = \frac{\al^{-\gam}}{1+\rho\xu}$, and
	\begin{align}\label{eq:u-impatient}
		u(y) &= \frac{1}{\del}\left[\frac{\gam}{1-\gam}\big(\psi(y)\big)^{1-\frac{1}{\gam}}+\frac{r+\rho-\del}{\rho}\big(\psi(y)-y\big)\right],
	\end{align}\vspace{1ex}
	for $0<y\le \frac{\al^{-\gam}}{1+\rho\xu}$, in which $\psi = \psi(y)$ is the $($strictly increasing$)$ inverse of $y = y(\psi)$ in $(i)$.
\end{proposition}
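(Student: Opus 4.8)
The whole statement reduces to an invariant–region (barrier) analysis of the scalar ODE in \eqref{eq:y-ODE-impatient}. Write its right‑hand side as $N(\psi,y)/D(\psi,y)$ with
\[
N(\psi,y)=\tfrac{\rho}{r+\rho}\Big(\tfrac{r+\rho-\del}{\rho}-\psi^{-1/\gam}\Big)y,\qquad D(\psi,y)=y-\tfrac{\del}{r+\rho}\,\psi .
\]
The first thing I would record are the two sign facts that the hypothesis $\del\ge r+\rho(1-\al)$ buys us: (a) $\tfrac{r+\rho-\del}{\rho}\le\al$, so that $\tfrac{r+\rho-\del}{\rho}-\psi^{-1/\gam}<0$ for every $\psi\in(0,\al^{-\gam})$ (using $\psi^{-1/\gam}>(\al^{-\gam})^{-1/\gam}=\al$); and (b) a one‑line computation with \eqref{eq:xu} gives $1+\rho\xu=(r+\rho)/(r+\rho(1-\al))$, whence the prescribed terminal value $y(\al^{-\gam})=\al^{-\gam}/(1+\rho\xu)$ satisfies $y(\al^{-\gam})\le\tfrac{\del}{r+\rho}\al^{-\gam}$, with strict inequality exactly when $\del>r+\rho(1-\al)$.

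\textbf{Part $(i)$.} Suppose first $\del>r+\rho(1-\al)$. Near $\psi=\al^{-\gam}$ one has $D<0$, so the ODE is regular there, Picard--Lindel\"of gives a unique local solution, and I continue it to the left on a maximal interval $(\psi_0,\al^{-\gam}]$. The claim is the a priori bound $0<y(\psi)<\tfrac{\del}{r+\rho}\psi$ on this interval. The lower bound is free: $y$ factors out of $N$, so $\{y=0\}$ is invariant for the (locally Lipschitz away from $\{D=0\}$) ODE, hence a positive solution stays positive. For the upper bound, if $D$ first vanished (moving left) at some $\psi_1>0$, then $D<0$ on $(\psi_1,\al^{-\gam}]$ while $N(\psi_1,y(\psi_1))<0$ by sign fact (a) (with $y(\psi_1)=\tfrac{\del}{r+\rho}\psi_1>0$); so $y'=N/D\to+\infty$ and hence $D'(\psi)=y'(\psi)-\tfrac{\del}{r+\rho}\to+\infty$ as $\psi\downarrow\psi_1$, forcing $D$ to be increasing there and contradicting $D(\psi_1)=0$ with $D<0$ to its right. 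Granting the bound, sign fact (a) gives $N<0$, $D<0$, hence $y'>0$ on $(\psi_0,\al^{-\gam})$; so $y$ is strictly increasing, stays in the compact box $0<y\le\al^{-\gam}/(1+\rho\xu)$, $\psi\le\al^{-\gam}$, where the field is bounded and Lipschitz away from $\{D=0\}$ (which the bound avoids), and therefore the solution extends with $\psi_0=0$. Monotonicity plus $D<0$ forces $y(\psi)\to0^+$ as $\psi\to0^+$, giving the stated range and strict bound. The borderline value $\del=r+\rho(1-\al)$ is the one genuine nuisance: there $N$ and $D$ both vanish at $(\al^{-\gam},\al^{-\gam}/(1+\rho\xu))$, so \eqref{eq:y-ODE-impatient} is singular at the initial point; I would handle it by a local analysis (L'H\^opital at that point pins the admissible initial slope to $a_+=\tfrac{1}{2(r+\rho)}\big(\del+\sqrt{\del^2+4\rho\del\al/\gam}\,\big)>\tfrac{\del}{r+\rho}$, so that $D<0$ just to the left, after which the argument above runs unchanged).

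\textbf{Part $(ii)$.} Let $\psi(\cdot)$ be the inverse of the strictly increasing $y(\cdot)$ from $(i)$ and define $u$ by \eqref{eq:u-impatient}. Differentiating \eqref{eq:u-impatient}, using $\tfrac{\gam}{1-\gam}\big(1-\tfrac1\gam\big)=-1$ together with $\psi'(y)=1/y'(\psi)$ and the ODE \eqref{eq:y-ODE-impatient}, all terms collapse and one obtains the clean identity
\[
u'(y)=\frac{y-\psi(y)}{\rho\,y},\qquad\text{equivalently}\qquad y-\rho\,y\,u'(y)=\psi(y).
\]
Plugging this into \eqref{eq:FBPDual2-riskless} makes both sides equal $\tfrac{\gam}{1-\gam}\psi(y)^{1-1/\gam}$, so \eqref{eq:FBPDual2-riskless} holds on $(0,\ys)$ with $\ys:=\al^{-\gam}/(1+\rho\xu)$; at $y=\ys$ we have $\psi(\ys)=\al^{-\gam}$, hence $u'(\ys)=\tfrac1\rho\big(1-(1+\rho\xu)\big)=-\xu$, which (with $\yo=\ys$) is \eqref{eq:FBPDual3-riskless}, and \eqref{eq:FBPDual4-riskless} then reads $\ys(1+\rho\xu)=\al^{-\gam}$, true by definition of $\ys$. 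There remains monotonicity and convexity of $u$. Writing $\beta(\psi):=y(\psi)/\psi$, the identity above gives $u'(y)<0\iff\beta(\psi(y))<1$ and $u''(y)>0\iff\beta$ strictly increasing in $\psi$; since $\beta(\al^{-\gam})=1/(1+\rho\xu)\in(0,1)$, both follow once $\beta$ is shown strictly increasing on $(0,\al^{-\gam}]$. Substituting $\beta$ into \eqref{eq:y-ODE-impatient} and simplifying (every factor has a definite sign because $0<y<\tfrac{\del}{r+\rho}\psi$), the sign of $\beta'(\psi)$ equals the sign of $\phi(\psi):=\beta(\psi)-m(\psi)$, with $m(\psi):=1-\tfrac{\rho}{r+\rho}\psi^{-1/\gam}$. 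Now $\phi(\al^{-\gam})=0$ (both $\beta(\al^{-\gam})$ and $m(\al^{-\gam})$ equal $(r+\rho(1-\al))/(r+\rho)$), so $\beta'(\al^{-\gam})=0$, while $m'(\al^{-\gam})>0$; hence $\phi'(\al^{-\gam})=-m'(\al^{-\gam})<0$ and $\phi>0$ just to the left of $\al^{-\gam}$. If $\phi$ vanished again, let $\psi^{\sharp}<\al^{-\gam}$ be the largest such point; then $\phi>0$ on $(\psi^{\sharp},\al^{-\gam})$, so $\beta'(\psi^{\sharp})=0$, giving $\phi'(\psi^{\sharp})=-m'(\psi^{\sharp})<0$, which contradicts $\phi\ge0$ just to the right of $\psi^{\sharp}$. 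Thus $\phi>0$ on $(0,\al^{-\gam})$, $\beta$ is strictly increasing, and $u$ is strictly decreasing and strictly convex. (In the borderline case $\del=r+\rho(1-\al)$ the same $\phi$-barrier works, using the slope $a_+$ from $(i)$ to check again that $\phi'(\al^{-\gam}^{-})<0$.)

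\textbf{Main obstacle.} The delicate point is the convexity in $(ii)$: unlike the a priori bound of $(i)$, the natural comparison function $m$ is \emph{tangent} to $\beta$ at the free boundary $\psi=\al^{-\gam}$ — $\phi$ vanishes there to first order — so a first-order barrier cannot even be started, and one must instead exploit the strict sign $m'(\al^{-\gam})>0$. The secondary, more routine technicality is the degenerate parameter value $\del=r+\rho(1-\al)$, where \eqref{eq:y-ODE-impatient} is itself singular at the initial condition and a separate local existence argument is needed.
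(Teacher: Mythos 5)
Your proposal is correct in substance, and it is worth recording where it coincides with and departs from the paper's own argument. For part $(i)$ you follow essentially the same route: the sign analysis of the numerator and denominator under $\del\ge r+\rho(1-\al)$ (the paper's inequality \eqref{eq:yODE-denom}), backward integration from the terminal point, and the a priori confinement $0<y(\psi)<\frac{\del}{r+\rho}\psi$. The paper obtains that confinement by working in the region $\Dc_\eps$ of \eqref{eq:DC_eps} and invoking the comparison theorem for first-order ODEs, whereas you run a direct barrier contradiction ($D'=y'-\frac{\del}{r+\rho}\to+\infty$ at a first vanishing of $D$), which is equally valid and in fact also covers the case where $D$ only tends to zero at the left end of the maximal interval. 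For part $(ii)$ your route is genuinely more direct: the paper first constructs $u$ through the integral representation \eqref{eq:u-impatient2} and then proves it agrees with \eqref{eq:u-impatient} by showing the auxiliary function $F$ of \eqref{eq:F} vanishes identically, while you take \eqref{eq:u-impatient} as the definition and verify \eqref{eq:FBPDual2-riskless} and the boundary conditions directly; both hinge on the same identity $u'(y)=\frac1\rho-\frac{\psi(y)}{\rho y}$, i.e. $\psi(y)=y-\rho y u'(y)$. Your convexity inequality $\phi>0$, rewritten as $y(\psi)>\psi-\frac{\rho}{r+\rho}\psi^{1-\frac{1}{\gam}}$, is exactly the paper's Lemma \ref{lem:u_is_convex}; the paper proves it by computing the defect of the comparison function $w(\psi)=\psi-\frac{\rho}{r+\rho}\psi^{1-\frac{1}{\gam}}$ (which is strictly positive in the interior, precisely the strictness you exploit through $m'>0$) and citing the comparison theorem, while your largest-zero argument with $\phi'=-m'<0$ at any zero is a self-contained substitute. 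So the ``tangency obstacle'' you identify is real, but both proofs resolve it the same way in spirit: by using the strict first-order defect rather than a strict ordering at the endpoint.

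Two soft spots, both reparable, should be flagged. First, the borderline case $\del=r+\rho(1-\al)$ is only sketched: your L'H\^opital computation correctly pins the admissible slope $a_+$ (it is the same quadratic-root analysis the paper performs at the singular point $(\psi_0,y_0)$ in the proof of Proposition \ref{prop:u-solution-patient}), but L'H\^opital by itself only tells you what the slope must be \emph{if} a differentiable solution through the singular terminal point exists; an existence construction is still needed, e.g. perturbing the terminal value by $\eps$ and passing to the limit as in the paper's treatment of $(\psi_0,y_0)$, or, as the paper actually does here, letting $\del\downarrow r+\rho(1-\al)$ and using continuous dependence on $\del$. Second, in extending the solution to all of $(0,\al^{-\gam}]$, the assertion that the bound ``avoids $\{D=0\}$'' does not by itself guarantee extension: one must also rule out that the maximal solution terminates at some $\psi_1>0$ with $y(\psi)\to0$ there; since near $\{y=0\}$ and away from $\{D=0\}$ one has $0<y'\le C\,y$, a Gronwall-type (logarithmic) bound keeps $y$ away from zero on compact $\psi$-intervals and closes this. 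Neither point changes the verdict that your plan is sound.
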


\begin{proof}
	See Appendix \ref{app:u-solution-impatient}.
\end{proof}

Next, we consider the FBP \eqref{eq:FBPDual1-riskless}--\eqref{eq:FBPDual4-riskless} when $0<\del<r+\rho(1-\al)$. For this case, we find that $0<\ys<\yo=+\infty$. The following proposition provides the solution for this case.

\begin{proposition}\label{prop:u-solution-patient}
	Assume $0<\del< r+\rho(1-\al)$. Define the constants $\psi_0:=\left(\frac{r+\rho-\del}{\rho}\right)^{-\gam}\in(0,\al^{-\gam})$ and $y_0:=\frac{\del\psi_0}{r+\rho} \in \big(0,\frac{\al^{-\gam}}{1+\rho\xu} \big)$. Then:\vspace{1ex}
	
	\noindent $(i)$ There exist a constant $\ys \in \big(y_0,\frac{\al^{-\gam}}{1+\rho\xu} \big)$ and a strictly increasing function $y:(0,\al^{-\gam}]\to\big(0,\ys\big]$ satisfying\footnote{This statement is non-trivial since we are looking for a \textbf{strictly increasing} solution on $(0,\al^{-\gam}]$. See Figure \ref{fig:y_pat}.}
	\begin{align}\label{eq:y-FBP}
		\begin{cases}\displaystyle
			y'(\psi) = \frac{\frac{\rho}{r+\rho}\left(\frac{r+\rho-\del}{\rho} -\psi^{-\frac{1}{\gam}}\right)y(\psi)}{y(\psi)-\frac{\del}{r+\rho}\psi};\quad 0<\psi\le\al^{-\gam},\\
			y\left(\al^{-\gam}\right) = \ys.
		\end{cases}
	\end{align}
	Furthermore, $\max\left(0, \psi-\frac{\rho}{r+\rho}\psi^{1-\frac{1}{\gam}}\right) < y(\psi) < \frac{\del}{r+\rho}\psi$ for $0<\psi<\psi_0$, and $\frac{\del}{r+\rho}\psi < y(\psi) < \psi-\frac{\rho}{r+\rho}\psi^{1-\frac{1}{\gam}}$ for $\psi_0<\psi<\al^{-\gam}$.
	
	\noindent $(ii)$ A strictly decreasing and strictly convex solution of the FBP \eqref{eq:FBPDual1-riskless}--\eqref{eq:FBPDual4-riskless} is given by $\yo=+\infty$, $\ys$ as in $(i)$,
	\begin{align}\label{eq:u_patient_sol1}
		u(y) = \frac{r+\rho(1-\al) - \del}{\del\rho}\big(\al^{-\gam} - \ys(1+\rho\xu)\big)
		\left(\frac{y}{\ys}\right)^{-\frac{\del}{r+\rho(1-\al)-\del}}-\xu y+\frac{\al^{1-\gam}}{\del(1-\gam)};\quad y>\ys,
	\end{align}
	and
	\begin{align}\label{eq:u_patient_sol2}
		u(y) = \frac{\gam}{\del(1-\gam)}\big(\psi(y)\big)^{1-\frac{1}{\gam}}
		+\frac{r+\rho-\del}{\rho\del}\big(\psi(y)-y\big);\quad 0<y\le \ys,
	\end{align}\vspace{1ex}
	in which $\psi = \psi(y)$ is the $($strictly increasing$)$ inverse of $y = y(\psi)$ in $(i)$.
\end{proposition}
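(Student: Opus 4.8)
The plan is to prove $(i)$ first — this is the delicate part, because the right‑hand side of \eqref{eq:y-FBP} is singular along the line $\{y=\frac{\del}{r+\rho}\psi\}$ — and then, in $(ii)$, to build $u$ by gluing the explicit solution of the linear equation \eqref{eq:FBPDual1-riskless} on $(\ys,\infty)$ to the function produced by $(i)$ through the substitution $\psi=y-\rho y u'(y)$ (equivalently $\psi=y(1+\rho x)$ with $x=-u'(y)$ from \eqref{eq:Legendre}). First I would record the elementary facts that make the statement well posed: $0<\del<r+\rho(1-\al)$ gives $r+\rho-\del>\rho\al>0$, whence $\psi_0=\big(\tfrac{r+\rho-\del}{\rho}\big)^{-\gam}\in(0,\al^{-\gam})$, and since $\tfrac{\del}{r+\rho}<\tfrac{r+\rho(1-\al)}{r+\rho}=\tfrac{1}{1+\rho\xu}$ also $y_0=\tfrac{\del}{r+\rho}\psi_0\in\big(0,\tfrac{\al^{-\gam}}{1+\rho\xu}\big)$. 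Write the right‑hand side of \eqref{eq:y-FBP} as $N(\psi,y)/D(\psi,y)$, with $N(\psi,y)=\tfrac{\rho}{r+\rho}\big(\tfrac{r+\rho-\del}{\rho}-\psi^{-1/\gam}\big)y$ and $D(\psi,y)=y-\tfrac{\del}{r+\rho}\psi$; for $y>0$ the sign of $N$ is that of $\psi-\psi_0$. Hence a \emph{strictly increasing} solution must have $D<0$ on $(0,\psi_0)$ and $D>0$ on $(\psi_0,\al^{-\gam})$, so by continuity it is forced to pass through the point $(\psi_0,y_0)$, at which both $N$ and $D$ vanish — this is the obstruction the footnote refers to.

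\textbf{Local solution at the singular point.} I would analyze the planar field $\dot\psi=D$, $\dot y=N$, whose equilibrium $(\psi_0,y_0)$ has Jacobian $\begin{pmatrix}-\tfrac{\del}{r+\rho}&1\\ a&0\end{pmatrix}$ with $a=\tfrac{\rho}{\gam(r+\rho)}\psi_0^{-1/\gam-1}y_0>0$; since $\det=-a<0$ this is a hyperbolic saddle. Invariant‑manifold theory then gives a unique $C^1$ solution curve $\psi\mapsto y(\psi)$ through $(\psi_0,y_0)$ with positive slope there, namely the larger root $m_+=\tfrac12\big(b+\sqrt{b^2+4a}\big)$ of $m^2-bm-a=0$, where $b:=\tfrac{\del}{r+\rho}$; it is the only local increasing solution. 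Using $a=bP$ with $P:=\tfrac{\rho}{\gam(r+\rho)}\psi_0^{-1/\gam}$, a one‑line check gives the ordering $b<m_+<b+P=h'(\psi_0)$, where $h(\psi):=\psi-\tfrac{\rho}{r+\rho}\psi^{1-1/\gam}$; and $h(\psi_0)=y_0=\tfrac{\del}{r+\rho}\psi_0$, so the three curves $y=\tfrac{\del}{r+\rho}\psi$, $y=y(\psi)$, $y=h(\psi)$ all pass through the singular point with strictly ordered slopes.

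\textbf{Global extension and the bounds.} Starting from this local solution I would continue it in both directions, using $L(\psi):=\tfrac{\del}{r+\rho}\psi$ and $h(\psi)$ as barriers. One checks: $h(\psi)-L(\psi)=\tfrac{\psi}{r+\rho}\big(r+\rho-\del-\rho\psi^{-1/\gam}\big)$ has the sign of $\psi-\psi_0$; on $\{y=L(\psi)\}$ the field is vertical and $N/D\to\pm\infty$, so the solution cannot cross $L$; and on $\{y=h(\psi)\}$ one has $N/D\big|_{y=h}=h(\psi)/\psi<h'(\psi)$, so the solution cannot cross $h$ in the relevant direction. This confines the solution to $\{L(\psi)<y<h(\psi)\}$ for $\psi\in(\psi_0,\al^{-\gam})$ and to $\{\max(0,h(\psi))<y<L(\psi)\}$ for $\psi\in(0,\psi_0)$, which is exactly the asserted set of bounds; it keeps $N$ and $D$ of the same sign (hence $y'>0$, strict monotonicity), prevents $D$ from vanishing again, and forces $y(\psi)\to0$ as $\psi\to0^+$ (since $0<y<L(\psi)\to0$). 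Setting $\ys:=y(\al^{-\gam})$, the trapping at the right endpoint yields $\tfrac{\del}{r+\rho}\al^{-\gam}<\ys<h(\al^{-\gam})=\tfrac{\al^{-\gam}}{1+\rho\xu}$, so $\ys\in\big(y_0,\tfrac{\al^{-\gam}}{1+\rho\xu}\big)$; and uniqueness among strictly increasing solutions follows from uniqueness of the $m_+$‑separatrix. This completes $(i)$, with $\psi(\cdot):=y(\cdot)^{-1}$ the strictly increasing inverse used below.

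\textbf{Construction of $u$.} For $(ii)$, on $(0,\ys]$ I define $u$ by \eqref{eq:u_patient_sol2}; substituting $\psi=\psi(y)$ and using \eqref{eq:y-FBP} shows it solves \eqref{eq:FBPDual2-riskless} and satisfies $u'(y)=\tfrac{y-\psi(y)}{\rho y}$, so the bounds from $(i)$ ($y(\psi)<h(\psi)<\psi$ for $\psi>\psi_0$ and $y(\psi)<L(\psi)<\psi$ for $\psi<\psi_0$) give $u'<0$, while reducing $u''$ to $\tfrac{\psi-y\psi'(y)}{\rho y^2}$ and using the same bounds together with $m_+>b$ at $y_0$ gives $u''>0$ on $(0,\ys]$; moreover $\psi(\ys)=\al^{-\gam}$ yields $u'(\ys)=\tfrac{\ys-\al^{-\gam}}{\rho\ys}$, which is precisely \eqref{eq:FBPDual4-riskless}. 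On $(\ys,\infty)$ I solve the linear ODE \eqref{eq:FBPDual1-riskless} with datum $u(\ys)$; its general solution is $u(y)=Cy^{-\del/(r+\rho(1-\al)-\del)}-\xu y+\tfrac{\al^{1-\gam}}{\del(1-\gam)}$, for which $\lim_{y\to\infty}u'(y)=-\xu$ holds automatically (using $\del<r+\rho(1-\al)$), so \eqref{eq:FBPDual3-riskless} is satisfied with $\yo=+\infty$; matching $u(\ys)$ pins down $C=\tfrac{r+\rho(1-\al)-\del}{\del\rho}\big(\al^{-\gam}-\ys(1+\rho\xu)\big)(\ys)^{\del/(r+\rho(1-\al)-\del)}$, i.e.\ formula \eqref{eq:u_patient_sol1}, and $C>0$ because $\ys<\tfrac{\al^{-\gam}}{1+\rho\xu}$, so $u$ is strictly decreasing and strictly convex on $(\ys,\infty)$. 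Finally, an algebraic identity shows that this datum feeds back through \eqref{eq:FBPDual1-riskless} to exactly $u'(\ys)=\tfrac{\ys-\al^{-\gam}}{\rho\ys}$, so $u\in C^1$ across $\ys$; with one‑sided convexity on each side, $u$ is a globally strictly decreasing, strictly convex solution of \eqref{eq:FBPDual1-riskless}--\eqref{eq:FBPDual4-riskless}. The hard part throughout is $(i)$: because a strictly increasing solution is pinned to the singular point $(\psi_0,y_0)$, one must carry out the saddle/separatrix analysis for existence and uniqueness there and then verify that the two barriers — which themselves meet at $(\psi_0,y_0)$ — trap the solution on the whole of $(0,\al^{-\gam}]$, so that monotonicity is never lost and the denominator never vanishes again; part $(ii)$ is then explicit integration plus sign bookkeeping.
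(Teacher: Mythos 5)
Your proposal is correct, and in the crucial part $(i)$ it takes a genuinely different route from the paper. The paper handles the singular point $(\psi_0,y_0)$ by solving perturbed terminal/initial-value problems with data $y(\psi_0)=y_0\mp\eps$, letting $\eps\to0^+$ to get left and right solutions, gluing them, and then using L'H\^{o}pital's rule to show the glued function is differentiable at $\psi_0$ (the slope satisfying the same quadratic $m^2-\tfrac{\del}{r+\rho}m-a=0$ you derive, with a unique positive root); the bounds come from a comparison-theorem lemma whose defect computation is exactly your barrier inequality $N/D|_{y=h}=h(\psi)/\psi<h'(\psi)$. You instead recognize $(\psi_0,y_0)$ as a hyperbolic saddle of the planar field $(\dot\psi,\dot y)=(D,N)$ and take the unstable separatrix, which gives existence, local uniqueness among increasing solutions, and the slope $m_+$ in one stroke; your ordering $b<m_+<h'(\psi_0)$ then seeds the trapping region, and the global bounds follow from the same barriers the paper uses. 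The phase-plane argument is arguably cleaner and yields uniqueness of the increasing solution (which the paper does not claim), at the cost of invoking invariant-manifold theory rather than elementary ODE limits; your explicit use of $m_+>b$ to get $u''(y_0)>0$ also patches a point the paper passes over silently. In part $(ii)$ you follow essentially the paper's construction (explicit linear solution on $(\ys,\infty)$, inverse function $\psi$ on $(0,\ys]$), with two minor variations: you verify \eqref{eq:u_patient_sol2} directly by differentiation and the ODE for $\psi$, rather than via the paper's integral representation and the identity $F\equiv0$; and you fix the constant $C$ by value matching at $\ys$ and then assert the $C^1$ fit, whereas the paper imposes the free-boundary condition \eqref{eq:FBPDual4-riskless} to determine $C$ and gets value matching for free. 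The "algebraic identity" you defer does hold (the two determinations of $C$ coincide, using $1+\rho\xu=\tfrac{r+\rho}{r+\rho(1-\al)}$), but it should be written out, since it is the only place where smooth pasting across $\ys$ is actually established in your version.
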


\begin{proof}
	See Appendix \ref{app:u-solution-patient}.
\end{proof}

Propositions \ref{prop:u-solution-impatient} and \ref{prop:u-solution-patient} yield a decreasing and convex solution $\big(\yo, \ys, u(y)\big)$ of the FBP \eqref{eq:FBPDual1-riskless}--\eqref{eq:FBPDual4-riskless}. By reversing the transformation in \eqref{eq:convexcon}, we obtain an increasing and concave solution $\big(\xs, v(x)\big)$ of the FBP \eqref{eq:FBP-riskless}. We state this result as the following proposition.

\begin{proposition}\label{prop:HJB-riskless}
	Let $\yo$, $\ys$, $\psi$, and $u$ be as in Proposition {\rm \ref{prop:u-solution-impatient}} $($if $\del\ge r+\rho(1-\al))$ or Proposition {\rm \ref{prop:u-solution-patient}} $($if $0<\del<r+\rho(1-\al))$. Let $J:(-\infty, -\xu)\to(0,\yo)$ be the inverse of $u'$, that is, $u'\big(J(\xi)\big)=\xi$ for $\xi<-\xu$. Define
	\begin{align}
		\xs &:= -u'(\ys) = \frac{\al^{-\gam}}{\rho\ys}-\frac{1}{\rho},\\
		\vpt(x) &:= u\big(J(-x)\big) + x J(-x);\quad x>\xu,\\
		\vpt(\xu) &= \lim_{x\to\xu^+} \Big(u\big(J(-x)\big) + x J(-x)\Big),
	\end{align}
	and
	\begin{align}\label{eq:CS-sol-riskless}
		\cso(x) :=
		\begin{cases}
			\al; &\quad \xu\le x\le \xs,\vspace{1ex}\\
			\displaystyle
			\Big(\psi\big(J(-x)\big)\Big)^{-\frac{1}{\gam}};&\quad x>\xs.
		\end{cases}
	\end{align}
	Then, $\xs$, $\vpt$, and $\cso$ satisfy \eqref{eq:cs-cand-riskless}, \eqref{eq:VE-riskless}, and \eqref{eq:FBP-riskless}. In particular, $\vpt = \vpt(x)$ is increasing and strictly concave with respect to $x$. Furthermore, $\xs=\xu$ $($resp.\ $\xu<\xs<\frac{r+\rho-\del}{\del\rho})$ if $\del\ge r+\rho(1-\al)$ $($resp.\ $0<\del <r+\rho(1-\al))$. 
\end{proposition}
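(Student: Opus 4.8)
The plan is to assemble the proposition from the two preceding propositions by unwinding the Legendre transform \eqref{eq:convexcon}--\eqref{eq:Legendre}. First I would record the basic structural facts supplied by Propositions \ref{prop:u-solution-impatient} and \ref{prop:u-solution-patient}: in each case $u$ is strictly decreasing and strictly convex on $(0,\yo)$, so $u'$ is a strictly increasing continuous bijection from $(0,\yo)$ onto some interval of negative values. I would then check that this interval is exactly $(-\infty,-\xu)$: the right endpoint comes from \eqref{eq:FBPDual3-riskless}, i.e. $u'(y)\to-\xu$ as $y\to\yo^-$ (in the impatient case $\yo=\al^{-\gam}/(1+\rho\xu)$ and one computes $u'$ at that endpoint directly from \eqref{eq:u-impatient} using $y(\al^{-\gam})=\al^{-\gam}/(1+\rho\xu)$; in the patient case $\yo=+\infty$ and \eqref{eq:u_patient_sol1} gives $u'(y)\to-\xu$ as $y\to\infty$ since the power $-\del/(r+\rho(1-\al)-\del)$ is negative). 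The left endpoint, $u'(y)\to-\infty$ as $y\to 0^+$, follows because $x=-u'(y)$ must range over all of $[\xu,\infty)$ — concretely, from $x=-u'(y)=\psi(y)/( \text{something})$-type expressions using $I(y)=-u'(y)$ and $\psi(y)\to 0$. Hence $J:=(u')^{-1}:(-\infty,-\xu)\to(0,\yo)$ is well defined, which makes $\vpt(x)=u(J(-x))+xJ(-x)$ well defined for $x>\xu$, and the limit defining $\vpt(\xu)$ exists (finite or via the formula; one should note $\vpt(\xu)$ is finite because $u(y)$ stays bounded as $y\to\yo$).

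Next I would verify the three bullet claims. (1) That $\vpt$ is increasing and strictly concave: by the standard Legendre identities \eqref{eq:Legendre}, $\vpt'(x)=J(-x)>0$ and $\vpt''(x)=-1/u''(J(-x))<0$ since $u$ is strictly convex; these identities hold for the transform of a strictly convex decreasing function, so this is immediate once $J$ is in hand. (2) That $\cso$ as defined in \eqref{eq:CS-sol-riskless} agrees with \eqref{eq:cs-cand-riskless} and satisfies \eqref{eq:VE-riskless}: here I would use $\vpt'(x)=J(-x)=y$ where $y=J(-x)$, so $(1+\rho x)\vpt'(x)=(1+\rho x)y$; on $\xu\le x\le\xs$ I must show $(1+\rho x)y\ge\al^{-\gam}$, and for $x>\xs$ that $0<(1+\rho x)y<\al^{-\gam}$. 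The boundary case $x=\xs$ is exactly \eqref{eq:FBPDual4-riskless} rewritten via $\xs=-u'(\ys)$, which also pins down the stated formula $\xs=\al^{-\gam}/(\rho\ys)-1/\rho$. For the inequality away from the boundary I would exploit monotonicity: as $x$ increases, $y=J(-x)$ decreases, and one shows $x\mapsto(1+\rho x)J(-x)$ is monotone in the relevant direction on each piece (using $\psi=\psi(y)$, the relation $1+\rho x = \al^{-\gam}\psi^{-?}$-type from the construction, and the monotonicity of $\psi$). Then I would identify the optimizer: on $x>\xs$, \eqref{eq:cs-cand-riskless} gives $\cso(x)=((1+\rho x)\vpt'(x))^{-1/\gam}$, and I must show this equals $(\psi(J(-x)))^{-1/\gam}$, i.e. $(1+\rho x)y=\psi(y)$ when $x=-u'(y)$ — this is precisely the substitution $1+\rho x = 1-\rho u'(y)$ combined with the algebraic relation between $u',\psi,y$ coming from \eqref{eq:u-impatient}/\eqref{eq:u_patient_sol2} (differentiate and simplify using the ODE \eqref{eq:y-ODE-impatient}/\eqref{eq:y-FBP}).

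(3) That $\vpt$ solves the FBP \eqref{eq:FBP-riskless}: this is the mechanical but essential step — substitute $x=-u'(y)$, $\vpt(x)=u(y)-yu'(y)$, $\vpt'(x)=y$, $\vpt''(x)=-1/u''(y)$ into each branch of \eqref{eq:FBP-riskless} and check it reduces to the corresponding branch of \eqref{eq:FBPDual1-riskless}--\eqref{eq:FBPDual2-riskless}, which holds by Proposition \ref{prop:u-solution-impatient} or \ref{prop:u-solution-patient}; the free-boundary matching \eqref{eq:FBPDual4-riskless} $\leftrightarrow$ the last line of \eqref{eq:FBP-riskless}. Finally, the comparison $\xs=\xu$ iff $\del\ge r+\rho(1-\al)$: in the impatient case Proposition \ref{prop:u-solution-impatient} gives $\yo=\ys=\al^{-\gam}/(1+\rho\xu)$, hence $\xs=-u'(\ys)=\xu$ by \eqref{eq:FBPDual3-riskless}; in the patient case Proposition \ref{prop:u-solution-patient} gives $\ys\in(y_0,\al^{-\gam}/(1+\rho\xu))$ with $y_0=\del\psi_0/(r+\rho)$ and $\psi_0=((r+\rho-\del)/\rho)^{-\gam}$, and plugging into $\xs=\al^{-\gam}/(\rho\ys)-1/\rho$ gives $\xu<\xs$ from $\ys<\al^{-\gam}/(1+\rho\xu)$ and $\xs<(r+\rho-\del)/(\del\rho)$ from $\ys>y_0$ — both are one-line monotonicity computations. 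I expect the main obstacle to be step (2): carefully verifying the sign/monotonicity of $(1+\rho x)\vpt'(x)-\al^{-\gam}$ on each side of $\xs$ and confirming the optimizer identity $(1+\rho x)\vpt'(x)=\psi(J(-x))$, since these require combining several of the algebraic relations buried in the proofs of the two previous propositions rather than any single clean statement; everything else is bookkeeping with the Legendre transform.
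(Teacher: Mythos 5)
Your overall route---invert $u'$, define $J$, pull $\vpt$ back through the Legendre identities, verify \eqref{eq:VE-riskless} via the identity $(1+\rho x)\vpt'(x)=\psi\big(J(-x)\big)$ for $x>\xs$ and via the explicit formula \eqref{eq:u_patient_sol1} on $[\xu,\xs]$, and read off the bounds on $\xs$ from those on $\ys$---is exactly the paper's, and most of it is indeed bookkeeping. The genuine gap is the point you dispose of in one clause: the claim that $u'(y)\to-\infty$ as $y\to0^+$, equivalently that $-u'$ maps $(0,\yo)$ onto all of $(\xu,+\infty)$, so that $J$ and hence $\vpt$ are defined for \emph{every} $x>\xu$. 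Your justification (``follows because $x=-u'(y)$ must range over all of $[\xu,\infty)$'') assumes the conclusion, and the concrete reason you gesture at, $\psi(y)\to0$, does not suffice: since $u'(y)=\tfrac{1}{\rho}-\tfrac{\psi(y)}{\rho y}$ for small $y$, what is needed is $\psi(y)/y\to+\infty$, and $\psi(y)\to0$ is compatible with $\psi(y)/y$ remaining bounded; the bounds in Propositions \ref{prop:u-solution-impatient} and \ref{prop:u-solution-patient} only give $\psi(y)/y>(r+\rho)/\del$ near $y=0$, a finite bound. The paper closes exactly this step by contradiction: if $u'(0^+)=K$ were finite (the limit exists by monotonicity of $u'$), then $\psi(y)/y\to1-\rho K>1$; but since $\psi(y)\to0$, L'H\^opital together with the ODE \eqref{eq:psi-TVP}/\eqref{eq:psi-FBP} forces $\lim_{y\to0^+}\psi(y)/y=\lim_{y\to0^+}\psi'(y)=0$, because $\big(\psi(y)\big)^{-1/\gam}\to+\infty$ in the denominator while the numerator stays bounded---a contradiction. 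Some argument of this type (or a direct estimate on the ODE for $y(\psi)$ showing $y(\psi)/\psi\to0$ as $\psi\to0^+$) must be supplied; without it the domain of $\vpt$ is not established and the remaining verification has nothing to stand on.

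Two smaller points. Your parenthetical reason for the finiteness of $\vpt(\xu)$ (``$u(y)$ stays bounded as $y\to\yo$'') fails in the patient case, where $\yo=+\infty$ and $u(y)\sim-\xu y$; what converges is $u(y)-yu'(y)$, as one sees from \eqref{eq:u_patient_sol1}. Conversely, the step you flag as the main obstacle is lighter than you fear: the optimizer identity $(1+\rho x)\vpt'(x)=\psi\big(J(-x)\big)$ is immediate from $u'(y)=\tfrac1\rho-\tfrac{\psi(y)}{\rho y}$ (recorded in the proofs of the two propositions), whence $(1+\rho x)\vpt'(x)=\psi(y)<\al^{-\gam}$ for $x>\xs$ by monotonicity of $\psi$; and on $[\xu,\xs]$ in the patient case the inequality $(1+\rho x)\vpt'(x)\ge\al^{-\gam}$ follows from the explicit form \eqref{eq:u_patient_sol1} together with $\ys\ge\tfrac{\del}{r+\rho}\al^{-\gam}$, which Proposition \ref{prop:u-solution-patient}(i) provides. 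So the plan is sound and parallel to the paper's proof, but you must add the argument for the limit of $u'$ at $0^+$.
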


\begin{proof}
	We, first, prove that $J(\cdot)$ is an increasing function such that $\lim_{\xi\to-\infty} J(\xi)=0$ and $\lim_{\xi\to-\xu^{-}}J(\xi)=\yo$. By Propositions \ref{prop:u-solution-impatient} and \ref{prop:u-solution-patient}, $u(\cdot)$ is convex and thus, $u'(\cdot)$ is strictly increasing. Therefore, its inverse $J(\xi)$ exists and is strictly increasing. That $\lim_{\xi\to-\xu^{-}}J(\xi)=\yo$ follows from $\lim_{y\to\yo^{\hspace{0.2ex}-}}u'(y) = -\xu$ by \eqref{eq:FBPDual3-riskless}. Finally, to show $\lim_{\xi\to-\infty} J(\xi)=0$, it suffices to show $\lim_{y\to 0^+} u'(y)=-\infty$. On the contrary, suppose that $\lim_{y\to 0^+} u'(y)\ne-\infty$. Because $u'(\cdot)$ is strictly increasing, we must have $\lim_{y\to 0^+} u'(y) = K$ for some constant $K<-\xu=\lim_{y\to\yo^{\hspace{0.2ex}-}}u'(y)$. Note that for a sufficiently small $\eps>0$, we have in both Propositions \ref{prop:u-solution-impatient} and \ref{prop:u-solution-patient} that $u'(y) = \frac{1}{\rho}-\frac{\psi(y)}{\rho y}$ for $0<y<\eps$. Therefore, we must have
	\begin{align}\label{eq:contradiction}
		\lim_{y\to0^+}\frac{\psi(y)}{y} = 1-\rho K >1.
	\end{align}
	On the other hand, since $\lim_{y\to0^+}\psi(y)=0$, by L'H\^{o}pital's rule, \eqref{eq:psi-TVP}, and \eqref{eq:psi-FBP}, one obtains that
	\begin{align}
		\lim_{y\to0^+}\frac{\psi(y)}{y} = \lim_{y\to0^+} \psi'(y) = 
		\lim_{y\to0^+} \frac{1-\frac{\del}{r+\rho}\frac{\psi(y)}{y}}{\frac{\rho}{r+\rho}\left(\frac{r+\rho-\del}{\rho} -\big(\psi(y)\big)^{-\frac{1}{\gam}}\right)}=0,
	\end{align}
	which contradicts \eqref{eq:contradiction}. Thus, $\displaystyle\lim_{y\to 0^+} u'(y)=-\infty$ which, in turn, yields that $\displaystyle\lim_{\xi\to-\infty} J(\xi)=0$.
	
	It is, then, straightforward to prove that $\xs$, $\vpt$, and $\cso$ satisfy \eqref{eq:cs-cand-riskless}, \eqref{eq:VE-riskless}, and \eqref{eq:FBP-riskless} by reversing the transformation \eqref{eq:convexcon} and by using the fact that $\yo$, $\ys$, and $u$ solve FBP \eqref{eq:FBPDual1-riskless}--\eqref{eq:FBPDual4-riskless}. That $\vpt$ is increasing and strictly concave follows from \eqref{eq:Legendre} since $u$ is decreasing and strictly convex as established by Propositions \ref{prop:u-solution-impatient} and \ref{prop:u-solution-patient}. Finally, the statement about $\xs$ follows from the properties of $\ys$ in Propositions \ref{prop:u-solution-impatient} and \ref{prop:u-solution-patient}.
\end{proof}

We now state the main result of this section regarding the solution of the control problem \eqref{eq:VF-riskless}.

\begin{theorem}\label{thm:VF-riskless}
	Let $v$ and $\cso$ be as in Proposition {\rm \ref{prop:HJB-riskless}}. We, then, have $V(x) = \vpt(x)$ for $x\ge\xu$. Furthermore, an optimal consumption-to-habit policy is given by $\big\{\cso\big(X^*(t)\big) \big\}_{t\ge0}$, in which $X^* = \big\{X^*(t) \big\}_{t \ge 0}$ solves
	\begin{align}\label{eq:XStar-riskless}
		\begin{cases}
			\displaystyle
			\frac{\dd}{\dd t}X^*(t) = (r+\rho)X^*(t) - \big(1+\rho X^*(t)\big) \cso\big(X^*(t)\big);\quad t\ge0, \\
			X^*(0) = x.
		\end{cases}
	\end{align}
\end{theorem}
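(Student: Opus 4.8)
The plan is a standard verification argument for this deterministic control problem. We have constructed, via Proposition~\ref{prop:HJB-riskless}, a function $\vpt\in C^1([\xu,\infty))$ (smooth on $(\xu,\xs)\cup(\xs,\infty)$, with the matching condition at $\xs$) that is increasing, strictly concave, and solves the HJB equation~\eqref{eq:HJB-riskless} with $\cso$ as the pointwise maximizer in~\eqref{eq:cs-cand-riskless}. First I would establish the \emph{upper bound} $V(x)\le\vpt(x)$: fix any $c(\cdot)\in\Ac(x)$ with associated trajectory $X(\cdot)$ from~\eqref{eq:X-riskless}, and compute $\frac{\dd}{\dd t}\big(\ee^{-\del t}\vpt(X(t))\big)$ using the chain rule and~\eqref{eq:X-riskless}. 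Since $\vpt$ solves the HJB equation and $c(t)\ge\al$, the integrand is dominated by $-\ee^{-\del t}\frac{c(t)^{1-\gam}}{1-\gam}$; integrating over $[0,T]$ and rearranging gives $\int_0^T \ee^{-\del t}\frac{c(t)^{1-\gam}}{1-\gam}\,\dd t \le \vpt(x) - \ee^{-\del T}\vpt(X(T))$. Then I would send $T\to\infty$, which requires a transversality estimate: $\ee^{-\del T}\vpt(X(T))\to 0$ (or at least $\liminf$ of it is $\ge 0$, which suffices for the inequality). This is controlled because any admissible $X$ satisfies $\xu\le X(t)\le x\,\ee^{(r+\rho)t}$ (consumption is nonnegative, so growth is at most the linear-ODE bound), and one reads off the growth rate of $\vpt$ at infinity from the explicit formulas in Propositions~\ref{prop:u-solution-impatient}--\ref{prop:u-solution-patient} — in the patient case $\vpt(x)\sim \text{const}\cdot x^{1-\gam}$ type behavior, and one checks $\del$ beats the exponential growth; there may be a case split on $\gam\lessgtr 1$ since $1-\gam$ changes sign, and a careful handling when $\vpt$ could be negative (the $\frac{1}{1-\gam}$ prefactor). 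This transversality bookkeeping is the part I expect to be the main technical obstacle.

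Next I would prove the \emph{lower bound} $V(x)\ge\vpt(x)$ by exhibiting the candidate optimal control. One first argues that the ODE~\eqref{eq:XStar-riskless} has a unique global solution $X^*(\cdot)$ staying in $[\xu,\infty)$: the right-hand side is locally Lipschitz away from $\xs$ (and continuous at $\xs$), so local existence/uniqueness holds; to get \emph{global} existence and the state constraint $X^*(t)\ge\xu$, I would check the sign of the drift at the boundary, namely at $x=\xu$ with $c=\al$ the drift is $(r+\rho)\xu - (1+\rho\xu)\al$, which by~\eqref{eq:xu} equals $0$ — so $\xu$ is an equilibrium and the trajectory cannot cross below it (in the impatient case $\xs=\xu$ and $X^*$ decreases to $\xu$; in the patient case $\xs>\xu$ and one analyzes the drift on $(\xu,\xs)$). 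Then I set $c^*(t):=\cso(X^*(t))$, verify $c^*\in\Ac(x)$, and run the computation of the previous paragraph \emph{with equality}: because $\cso(x)$ is exactly the maximizer in~\eqref{eq:HJB-riskless}, we get $\frac{\dd}{\dd t}\big(\ee^{-\del t}\vpt(X^*(t))\big) = -\ee^{-\del t}\frac{c^*(t)^{1-\gam}}{1-\gam}$, hence $\int_0^T \ee^{-\del t}\frac{c^*(t)^{1-\gam}}{1-\gam}\,\dd t = \vpt(x) - \ee^{-\del T}\vpt(X^*(T))$. Taking $T\to\infty$ and using the same transversality fact (now I need $\ee^{-\del T}\vpt(X^*(T))\to 0$ exactly, which is easy since $X^*$ is bounded — it converges to $\xu$ or to a finite limit — so $\vpt(X^*(T))$ is bounded) yields $\int_0^\infty \ee^{-\del t}\frac{c^*(t)^{1-\gam}}{1-\gam}\,\dd t = \vpt(x)$. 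Combining the two bounds gives $V(x)=\vpt(x)$ and simultaneously shows $c^*$ is optimal.

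A few remarks on what to be careful about. The HJB equation holds classically on $(\xu,\xs)\cup(\xs,\infty)$ and $\vpt\in C^1$ across $\xs$ with $\vpt''$ having at worst a jump there; since $X(t)$ crosses $\xs$ at most on a measure-zero set of times (or one can invoke that $\vpt\circ X$ is absolutely continuous as a composition of $C^1$ with Lipschitz), the fundamental-theorem-of-calculus step is legitimate — I would state this as a short lemma rather than belabor it. One should also note $\vpt(\xu)$ is defined as a limit and confirm continuity there so the argument extends to the boundary point $x=\xu$. Finally, in the impatient case one should double-check that $X^*$ started at $x>\xu$ indeed decreases monotonically to $\xu$ (drift $(r+\rho)x-(1+\rho x)\al<0$ for $x>\xu$ by~\eqref{eq:xu} and $\xu$ being the root), so that $c^*\equiv\al$ eventually and the objective integral converges; the patient case is analogous with the trajectory driven toward the rest point of~\eqref{eq:XStar-riskless} in $(\xu,\infty)$. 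I would relegate the full details to an appendix and present here only the skeleton of the verification plus the transversality estimates.
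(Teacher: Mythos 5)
Your overall architecture is the same as the paper's: Step 1 is the identical HJB-inequality verification for the upper bound, and Step 2 constructs $X^*$ from \eqref{eq:XStar-riskless}, checks invariance and monotonicity of the trajectory through the sign of the drift (this is exactly the content of the paper's Lemma \ref{lem:CS-comparison}, with the rest points $\xu$ and $x_0$), and reruns the computation with equality, the transversality along the bounded optimal path being immediate. The one place where your plan, as written, would not go through is the transversality estimate in Step 1 for $0<\gam<1$, which you yourself flag as the main obstacle. The value function here does \emph{not} behave like $\mathrm{const}\cdot x^{1-\gam}$: because the constraint $c\ge\al$ caps marginal utility, \eqref{eq:VE-riskless} gives $(1+\rho x)\vpt'(x)<\al^{-\gam}$ for $x>\xs$, so $\vpt$ grows at most logarithmically in $x$. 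This is not cosmetic: if $\vpt$ really grew like $x^{1-\gam}$, then with your trajectory bound $X(T)\le x\,\ee^{(r+\rho)T}$ you would need $\del>(1-\gam)(r+\rho)$ to force $\ee^{-\del T}\vpt\big(X(T)\big)\to0$, and no such restriction is assumed in the paper; for instance, in the patient case $\del<r+\rho(1-\al)$ with small $\gam$ the required inequality fails even with the sharper Gronwall bound $X(T)\le\xu+(x-\xu)\ee^{(r+\rho(1-\al))T}$. So ``checking that $\del$ beats the exponential growth'' does not work in general. The paper's Lemma \ref{lem:Transversality-riskless} instead combines the Gronwall bound with the logarithmic growth of $\vpt$ coming from \eqref{eq:VE-riskless}, which makes $\ee^{-\del T}\vpt\big(X(T)\big)\to0$ for every $\del>0$.

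The gap is easily repaired, and your own parenthetical remark already contains the cheapest fix for Step 1: since $\vpt>0$ when $0<\gam<1$, you may simply drop the nonnegative term $\ee^{-\del T}\vpt\big(X(T)\big)$ and let $T\to\infty$ by monotone convergence, while for $\gam>1$ the function $\vpt$ is bounded between $\al^{1-\gam}/\big(\del(1-\gam)\big)$ and $0$, so the limit is trivially zero; the full limit is only genuinely needed along $X^*$ in Step 2, where boundedness of $X^*$ makes it immediate, as you note. Two smaller points: the assertion that the trajectory ``cannot cross below $\xu$'' needs backward uniqueness at the boundary, which the paper secures by observing that the terminal-value problem with datum $\xu$ has the constant solution as its unique solution on a compact region where the right-hand side of \eqref{eq:XStar-riskless} is Lipschitz; and the identification of the long-run limits of $X^*$ (namely $\xu$ when $\del\ge r+\rho(1-\al)$ and $x_0$ otherwise) should be justified by the comparison $\cso(x)\gtrless\frac{(r+\rho)x}{1+\rho x}$, i.e.\ Lemma \ref{lem:CS-comparison}, rather than by inspection.
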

\begin{proof}
	We complete the proof in two steps. In the first step, we show that $\vpt(x)\ge V(x)$. In the second step, we prove that $t\mapsto\cso\big(X^*(t)\big)$ is admissible and that $\vpt(x) = \int_0^{+\infty} \ee^{-\del\,t} \cso\big(X^*(t)\big)^{1-\gam}/(1-\gam)\dd t$, which implies that $\vpt(x)\le V(x)$. The statement of the theorem, then, follows from these two steps.\vspace{1em}
	
	\noindent \textbf{Step 1:} Let $c(\cdot)\in\Ac_0$, and let $\{X(t)\}_{t \ge 0}$ be the corresponding wealth-to-habit process given by \eqref{eq:X-riskless}. We, then, have
	\begin{align}\label{eq:Dynkin-riskless}
		\ee^{-\del T}\vpt\big(X(T)\big) + \int_0^T \ee^{-\del t}\frac{c(t)^{1-\gam}}{1-\gam}\,\dd t
		= \vpt(x) + \int_0^T \ee^{-\del t} \Lc_{c(t)}\vpt\big(X(t)\big) \dd t;\quad T>0,
	\end{align}
	in which we have defined the operator $\Lc_{c}$, for any $c \ge \al$, by
	\begin{align}
		\Lc_{c} \vpt(x) := -\del \vpt(x) + (r+\rho) x \vpt'(x) + \frac{c^{1-\gam}}{1-\gam} - (1+\rho x)c \vpt'(x).
	\end{align}
	Because $\vpt(\cdot)$ satisfies the HJB equation \eqref{eq:HJB-riskless}, we have $\Lc_{c(t)} \vpt\big(X(t)\big)\le 0$ for all $t\ge 0$. Therefore, from \eqref{eq:Dynkin-riskless}, it follows that
	\begin{align}\label{eq:Dynkin-riskless2}
		\ee^{-\del T} \, \vpt\big(X(T)\big) + \int_0^T \ee^{-\del t} \, \frac{c(t)^{1-\gam}}{1-\gam}\,\dd t
		\le \vpt(x);\quad T\ge0.
	\end{align}
	By letting $T\to+\infty$ in \eqref{eq:Dynkin-riskless2} and using Lemma \ref{lem:Transversality-riskless} in Appendix \ref{app:Transversality-riskless}, we obtain that $\vpt(x)\ge \int_0^\infty \ee^{-\del t} \, \frac{c(t)^{1-\gam}}{1-\gam}\,\dd t$. Finally, by taking the supremum over all $c(\cdot)\in\Ac_0$, we obtain that $\vpt(x)\ge V(x)$.\vspace{1em}
	
	\noindent\textbf{Step 2:} 
	First, we show that \eqref{eq:XStar-riskless} has a solution $X^*:[0,+\infty)\to[\xu,+\infty)$. Note that if $x=\xu$, the unique solution of \eqref{eq:XStar-riskless} is $X^*\equiv\xu$. Similarly, by using Lemma \ref{lem:CS-comparison}(ii) in Appendix \ref{app:Transversality-riskless}, if $0< \del < r+\rho(1-\al)$ and if $x=x_0:=\frac{r+\rho-\del}{\del\rho}$, then the unique solution of \eqref{eq:XStar-riskless} is $X^*\equiv x_0$. Next, consider the case $x\notin\{\xu,x_0\}$, for which \eqref{eq:XStar-riskless} has a non-constant solution. We consider three sub-cases:
	\begin{itemize}
		\item[(a)] Suppose $\del\ge r+\rho(1-\al)$ and $x>\xu$. For $T>0$, define the region $\Dc(T):=\{(t,X^*):0\le t\le T, \, \xu\le X^*\le x\}$. By the classical existence and uniqueness theorem for first-order ODEs, \eqref{eq:XStar-riskless} has a unique solution in $\Dc(T)$ because the ODE is Lipschitz with respect to $x$ over $\Dc(T)$; denote this solution by $X^*:[0,T_1] \to [\xu,x]$ for some $T_1\in(0, T]$.  By Lemma \ref{lem:CS-comparison}(i), $X^*(\cdot)$ is strictly decreasing:
		\begin{align}
			\frac{\dd}{\dd t}X^*(t) = (r+\rho)X^*(t) - \big(1+\rho X^*(t)\big) \cso\big(X^*(t)\big) <0;\quad 0\le t\le T_1.
		\end{align}
		We claim that the solution that starts from the top-left corner can only exit from the right edge of $\Dc(T)$, that is, $T_1 = T$.  Indeed, the solution cannot exit from the top boundary since it starts from the top left corner and is strictly decreasing in  $\Dc(T)$; 	therefore, $X^*(t)<x$ for all $0 \le t \le T_1$.  Furthermore, we must have $X^*(T_1) > \xu$ because $X^*(T_1)=\xu$ contradicts the uniqueness of the solution of the following terminal-value problem:
		\begin{align}
			\begin{cases}
				\displaystyle
				\frac{\dd}{\dd t}\Xt^*(t) = (r+\rho)\Xt^*(t) - \big(1+\rho \Xt^*(t)\big) \cso\big(\Xt^*(t)\big);\quad 0<t\le T_1, \\
				\Xt^*(T_1) = \xu,
			\end{cases}
		\end{align}
		which has the unique solution $\Xt^*\equiv\xu$. So, the solution has to exit from the right boundary of  $\Dc(T)$, which implies that $T = T_1$ and $\xu < X^*(t) < x$.  Because the choice of $T$ is arbitrary, we can conclude that, for this sub-case, \eqref{eq:XStar-riskless} has a unique decreasing solution $X^*:[0,+\infty)\to (\xu,x]$.
		
		\item[(b)] Suppose $0<\del < r+\rho(1-\al)$ and $x>x_0$. For $T>0$, define the region $\Dc(T):=\{(t,X^*):0\le t\le T, \, x_0 \le X^*\le x\}$. As in the argument for sub-case (a), we deduce that \eqref{eq:XStar-riskless} has a unique decreasing solution $X^*:[0,+\infty)\to (x_0,x]$.
		
		\item[(c)] Suppose $0<\del < r+\rho(1-\al)$ and $\xu<x<x_0$. For $T>0$, define the region $\Dc(T):=\{(t,X^*):0\le t\le T, \, x \le X^*\le x_0\}$.  As in the arguments for sub-cases (a) and (b), we deduce that \eqref{eq:XStar-riskless} has a unique increasing solution $X^*:[0,+\infty)\to [x, x_0)$.
	\end{itemize}
	We have, thereby, shown that \eqref{eq:XStar-riskless} has a solution $X^*(t)\ge\xu$.  Because $\cso(x)\ge \al$, it follows that $\cso\big(X^*(\cdot)\big)\in\Ac_0$ and $X^*(\cdot)$ is the corresponding wealth-to-habit process.
	
	It only remains to show that $\vpt(x) = \int_0^{+\infty} \ee^{-\del\,t} \cso\big(X^*(t)\big)^{1-\gam}/(1-\gam) \, \dd t$. To this end, we repeat the argument in Step 1 of the proof with $c(\cdot)$ and $X(\cdot)$ replaced by $\cso\big(X^*(\cdot)\big)$ and $X^*(\cdot)$, respectively. In particular, because $\vpt$ and $\cso$ satisfy \eqref{eq:cs-cand-riskless}, \eqref{eq:VE-riskless}, and \eqref{eq:FBP-riskless}, we have
	\begin{align}
		\Lc_{\cso(x)} \vpt(x) := -\del \vpt(x) + (r+\rho) x \vpt'(x) + \frac{\cso(x)^{1-\gam}}{1-\gam} - (1+\rho x)\cso(x) \vpt'(x) = 0,
	\end{align}
	for $x>\xu$. Equation \eqref{eq:Dynkin-riskless}, then, becomes
	\begin{align}
		\vpt(x) = \ee^{-\delta\,T}\vpt\big(X^*(T)\big) + \int_0^T \ee^{-\del\,t}\frac{\cso\big(X^*(t)\big)^{1-\gam}}{1-\gam} \,\dd t;\quad T>0.
	\end{align}
	Finally, by taking the limit as $T\to+\infty$ and by using Lemma \ref{lem:Transversality-riskless}, we obtain $\vpt(x) = \int_0^{+\infty} \ee^{-\del\,t}\frac{\cso(X^*(t))^{1-\gam}}{1-\gam} \, \dd t$.
\end{proof}

In the proof of Theorem \ref{thm:VF-riskless}, we also established the following results regarding the behavior of the optimal wealth-to-habit and consumption-to-habit processes. In its statement, $x_0$ and $c_0$ are the constants defined in Lemma \ref{lem:CS-comparison} of Appendix \ref{app:Transversality-riskless}, namely,
\begin{align}\label{eq:x0c0}
	x_0:=\frac{r+\rho-\del}{\del\rho},
	\quad\text{and}\quad
	c_0:=\frac{r+\rho-\del}{\rho}.
\end{align}

\begin{corollary}\label{cor:optimconsum_riskless}
	The optimal wealth-to-habit process $\{X^*(t)\}_{t \ge 0}$ and the optimal consumption-to-habit process $\big\{\cso\big(X^*(t)\big) \big\}_{t \ge 0}$ satisfy the following properties. 
	\begin{enumerate}
		\item[$(i)$] If $x=\xu$, then $X^*(t)=\xu$ and $\cso\big(X^*(t)\big)=\al$ for all $t\ge0$.
		\item[$(ii)$] If $\del\ge r+\rho(1-\al)$ and $x>\xu$, then $X^*(t)$ is a decreasing function, $\lim \limits_{t\to+\infty}X^*(t) = \xu$, and $\lim \limits_{t\to+\infty}\cso\big(X^*(t)\big) = \al$.
		\item[$(iii)$] If $0<\del< r+\rho(1-\al)$ and $x>x_0$, then $X^*(t)$ is a decreasing function, $\lim \limits_{t\to+\infty}X^*(t) = x_0$, and $\lim \limits_{t\to+\infty}\cso\big(X^*(t)\big) = c_0$.
		\item[$(iv)$] If $0<\del< r+\rho(1-\al)$ and $x=x_0$, then $X^*(t)=x_0$ and $\cso\big(X^*(t)\big)=c_0$ for all $t\ge0$.
		\item[$(v)$] If $0<\del< r+\rho(1-\al)$ and $\xu<x<x_0$, then $X^*(t)$ is an increasing function, $\lim \limits_{t\to+\infty}X^*(t) = x_0$, and $\lim \limits_{t\to+\infty}\cso\big(X^*(t)\big) = c_0$.  \qed
	\end{enumerate}  
\end{corollary}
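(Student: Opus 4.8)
The plan is to derive the corollary by combining what was already proved inside Step~2 of the proof of Theorem~\ref{thm:VF-riskless} with one additional observation about the asymptotics of the scalar autonomous ODE \eqref{eq:XStar-riskless}. The only genuinely new content is the identification of the $t\to+\infty$ limits.

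Parts $(i)$ and $(iv)$ are immediate: in Step~2 of the proof of Theorem~\ref{thm:VF-riskless} it is shown that \eqref{eq:XStar-riskless} has the constant solution $X^*\equiv\xu$ when $x=\xu$, and the constant solution $X^*\equiv x_0$ when $0<\del<r+\rho(1-\al)$ and $x=x_0$. The corresponding values of the consumption-to-habit process follow from \eqref{eq:CS-sol-riskless}: since $\xu\le\xs$ we have $\cso(\xu)=\al$, while in the patient case $x_0>\xs$ (recall from Proposition~\ref{prop:HJB-riskless} that $\xs<(r+\rho-\del)/(\del\rho)=x_0$), so $\cso(x_0)=c_0$ by Lemma~\ref{lem:CS-comparison}$(ii)$.

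For parts $(ii)$, $(iii)$, and $(v)$, the monotonicity of $t\mapsto X^*(t)$ and the fact that it stays in a bounded interval --- namely $(\xu,x]$ in case $(ii)$, $(x_0,x]$ in case $(iii)$, and $[x,x_0)$ in case $(v)$ --- were already established in sub-cases (a), (b), (c) of Step~2. Hence $X^*(t)$ converges to some limit $L$ as $t\to+\infty$. Write $f(\xi):=(r+\rho)\xi-(1+\rho\xi)\cso(\xi)$ for the right-hand side of \eqref{eq:XStar-riskless}. Then $f$ is continuous: by \eqref{eq:CS-sol-riskless}, $\cso$ equals the constant $\al$ on $[\xu,\xs]$ and equals $\big(\psi(J(-\xi))\big)^{-1/\gam}$ on $(\xs,+\infty)$, and the two expressions agree at $\xs$ because the free-boundary condition $(1+\rho\xs)\vpt'(\xs)=\al^{-\gam}$ in \eqref{eq:FBP-riskless} forces the right-hand limit of $\cso$ at $\xs$ to equal $\al$. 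Since \eqref{eq:XStar-riskless} is autonomous, $\frac{\dd}{\dd t}X^*(t)=f\big(X^*(t)\big)\to f(L)$ by continuity; and because $X^*(t)$ converges, $f(L)=0$ (otherwise $X^*$ would eventually be strictly monotone with derivative bounded away from zero, hence unbounded). By Lemma~\ref{lem:CS-comparison}$(i)$, $f(\xi)<0$ for every $\xi>\xu$ when $\del\ge r+\rho(1-\al)$, so $\xu$ is the only zero of $f$ in $[\xu,x]$ and $L=\xu$ in case $(ii)$; by Lemma~\ref{lem:CS-comparison}$(ii)$, when $0<\del<r+\rho(1-\al)$ one has $f(\xi)<0$ for $\xi>x_0$ and $f(\xi)>0$ for $\xu<\xi<x_0$, so $x_0$ is the only zero of $f$ in $(x_0,x]$ and in $[x,x_0)$, giving $L=x_0$ in cases $(iii)$ and $(v)$. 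Finally, the consumption limits follow from continuity of $\cso$ together with $\cso(\xu)=\al$ and $\cso(x_0)=c_0$ recorded above, since $\cso\big(X^*(t)\big)\to\cso(L)$.

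The step carrying all the real content is Lemma~\ref{lem:CS-comparison}: it determines the sign of $f$ on each sub-interval, and that sign is simultaneously responsible for the monotonicity used in Step~2 and for pinning down the unique equilibrium to which $X^*$ can converge. Granting that lemma, the corollary reduces to the elementary fact that a bounded, monotone solution of a scalar autonomous ODE with continuous right-hand side must converge to an equilibrium of that ODE.
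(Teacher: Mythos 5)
Your proposal is correct and follows essentially the same route as the paper, which treats the corollary as a byproduct of Step~2 in the proof of Theorem~\ref{thm:VF-riskless} (constant solutions at $\xu$ and $x_0$, monotonicity and boundedness in the three sub-cases, with the sign information coming from Lemma~\ref{lem:CS-comparison}). The only addition is that you spell out explicitly the standard fact that a bounded monotone solution of the autonomous ODE \eqref{eq:XStar-riskless} must converge to a zero of its (continuous) right-hand side, which pins the limits at $\xu$ and $x_0$ --- a step the paper leaves implicit, and your filling it in (including continuity of $\cso$ at $\xs$) is accurate.
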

\vspace{1em}

\begin{remark}
	The monotonic patterns of the consumption-to-habit and wealth-to-habit ratios are a consequence of our multiplicative habit-formation utility function of \eqref{eq:obj}. They are present even if we remove the habit-formation constraint. To see this, let $\al\to0^+$ in our model and note that $c_0$ and $x_0$ in Corollary \eqref{cor:optimconsum_riskless}.$(iii)-(v)$ do not depend on $\al$. Thus, as $\al\to 0^+$, the limiting behavior (at $t=+\infty$) of the optimal consumption policy should be the same as what is stated in Corollary 3.1.$(iii)-(v)$. In particular, if $w/z>x_0$ (i.e. the initial wealth-to-habit ratio is above $x_0$), then the individual starts with an initial consumption-to-habit $\cs(w/z)>c_0$ and, as $t\to+\infty$, reduces her consumption-to-habit (respectively, wealth-to-habit) to its limiting value of $c_0$ (respectively, $x_0$). Similarly, if $w/z<x_0$ (i.e. the initial wealth-to-habit ratio is below $x_0$), then the individual starts with a initial consumption-to-habit $\cs(w/z)<c_0$ and, as $t\to+\infty$, increases her consumption-to-habit (respectively, wealth-to-habit) to its limiting value of $c_0$ (respectively, $x_0$). Finally, if $w/z=x_0$, then the optimal consumption policy is to consume at a rate such that $c^*_t=C^*_t/Z^*_t\equiv c_0$. Thus, the monotoneicity of $X^*(t)$ and $c^*\big(X^*(t)\big)$ follows from the multiplicative habit-formation utility rather than the habit formation constrain \eqref{eq:Habit}.\qed
\end{remark}

\begin{remark}
	The previous remark also shed more light on the difference between patient and impatient optimal consumption patterns. Consider an individual with $\al=0$. As discussed in the previous remark, she optimally consumes to attain long-term consumption-to-habit level of $c_0$ and investment-to-habit level of $x_0$. Now, let us increase her $\al$ above 0 while keeping all the other parameters fixed. By Lemma \ref{lem:NoRuin}, a non-zero $\al$ would impose a safe level $W_t/Z_t\ge \xu$ in order to keep wealth positive. Then, the question is whether the individual can still attain her long-term consumption-to-habit $c_0$ and investment-to-habit $x_0$ now that she is subject to the habit-formation constraint. The answer depends on whether or not $x_0>\xu$, that is, if the long-term investment-to-habit ratio is sustainable under the habit-formation constraint. By \eqref{eq:xu} and \eqref{eq:x0c0}, we have that $\xu\le x_0~\Longleftrightarrow~ \del<r+\rho(1-\al)$. Thus, patient individuals can reach their long term goals (i.e. the wealth-to-habit ratio of $x_0$ and the consumption-to-habit ratio of $c_0$). If their initial wealth-to-habit ratio is above $x_0$ (i.e. $w/z\ge x_0$), then their consumption policy will be identical to the individual without habit-formation constraint (i.e. $\al=0$). If $w/z<x_0$, then they still optimally consume in a way to reach their long-term goals $(x_0,c_0)$ as in Corollary 3.1.$(v)$. However, their consumption policy is different from the case $\al=0$ in that if their wealth is low ($W^*_t/Z^*_t<x_\al$), they keep consuming at the minimum rate $C^*_t=\al Z^*_t$.
	while impatient individuals cannot. Finally, for impatient individual, $\del>r+\rho(1-\al)$ which implies $x_0<\xu$. Thus, these individual cannot attain their long-term goals of reaching a wealth-to-habit ratio of $x_0$ and a consumption-to-habit ratio of $c_0$. Since they have to start with a  wealth-to-habit ratio $w/z>\xu>x_0$, it is expected for them to try to reach the lowest possible wealth-to-habit ratio of $\xu$ (instead of their lower ideal goal $x_0$). This explains their optimal policy as in Corollary 3.1.$(ii)$.
	\qed
\end{remark}

In the following three subsection, we provide sharp conditions for presence of consumption hump, consider the case of logarithmic utility, and investigate the effect of adding a constant rate of income to our model.

\subsection{Necessary and sufficient condition for consumption hump}\label{sub:InverseU}
As discussed in the introduction, several empirical studies have observed that the consumption spending of individuals has a hump pattern, that is, the (absolute) consumption rate first increases until it reaches a maximum at about the age of 50, and it decreases during the remaining life of the individual.

In this subsection, we provide necessary and sufficient conditions for the presence of such a consumption hump in our model. In particular, as the following proposition states, a consumption hump can only exist if the wealth-to-habit ratio is either $(i)$ larger than a threshold $\xH>max\{x_0, \xu\}$; or $(ii)$ smaller than a threshold $\xH' \in (\xa, x_0)$. In the statement of the proposition, let $C^*(t):=\cs\big(X^*(t)\big) Z^*(t)$ be the optimal absolute consumption rate at time $t\ge 0$, in which the habit process $Z^*(t)$, $t\ge0$, solves
\begin{align}\label{eq:ZSTAR2}
	\begin{cases}
		\frac{\dd Z^*(t)}{\dd t} = -\rho Z^*(t)\big[1 - \cs\big(X^*(t)\big)\big];\quad t\ge0,\\
		Z(0)=z.
	\end{cases}
\end{align}

\begin{proposition}\label{prop:inverseU}
	The optimal absolute consumption rate $t\mapsto C^*(t)$ is hump-shaped if and only if one of the following two cases hold:
	\begin{enumerate}
		\item[$(i)$] $r<\del$ and $w>\xH z$, in which $\xH$ is the unique constant in the interval $\big[max\{x_0, \xu\}, +\infty\big)$ satisfying the equation
		\begin{align}\label{eq:xH}
			1+\frac{\del}{\gam}\frac{\xH-x_0}{1+\rho \xH} -\cs(\xH) = 0.
		\end{align}
		In this case, $t\mapsto C^*(t)$ is hump-shaped and attains its maximum at time $t=\tau_h$, in which $\tau_h>0$ is the unique time such that $X^*(\tau_h)=\xH$.
		
		\item[$(ii)$] $r<\del<r+\rho(1-\al)$, $1+\frac{\del}{\gam}\frac{\xa-x_0}{1+\rho \xa} -\al < 0$, and $\xa z<w<\xH' z$, in which $\xH'$ is the unique constant in the interval $(\xa, x_0)$ satisfying
		\begin{align}\label{eq:xHp}
			1+\frac{\del}{\gam}\frac{\xH'-x_0}{1+\rho \xH'} -\cs(\xH') = 0.
		\end{align}
		In this case, $t\mapsto C^*(t)$ is also hump-shaped and attains its maximum at time $t=\tau_h'$, in which $\tau_h'>0$ is the unique time such that $X^*(\tau_h')=\xH'$.
	\end{enumerate}
	In particular, Conditions $(i)$ and $(ii)$ fail if $\del\le r$, and Condition $(ii)$ fails if $\gam > 1-\frac{\del-r}{\rho(1-\al)}$.
\end{proposition}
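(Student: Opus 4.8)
The plan is to read off the shape of $C^*$ from the sign of $\frac{\dd}{\dd t}C^*(t)$. Writing $b(x):=(r+\rho)x-(1+\rho x)\cso(x)$ so that $\frac{\dd}{\dd t}X^*(t)=b\big(X^*(t)\big)$ by \eqref{eq:XStar-riskless}, and using $C^*(t)=\cso\big(X^*(t)\big)Z^*(t)$ with \eqref{eq:ZSTAR2},
\[
\frac{\dd}{\dd t}C^*(t)=Z^*(t)\Big[(\cso)'\big(X^*(t)\big)\,b\big(X^*(t)\big)-\rho\,\cso\big(X^*(t)\big)\big(1-\cso\big(X^*(t)\big)\big)\Big].
\]
On $[\xu,\xa]$ one has $\cso\equiv\al$, so this equals $-\rho\al(1-\al)Z^*(t)\le0$; hence a hump is impossible once the optimal path ever enters $[\xu,\xa]$, which by Corollary \ref{cor:optimconsum_riskless} forces $x>\xa$ (i.e.\ $w>\xa z$), and then $X^*$ stays in $(\xa,+\infty)$. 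There, differentiating the second equation of \eqref{eq:FBP-riskless} and using $(1+\rho x)v'(x)=\cso(x)^{-\gam}$ from \eqref{eq:cs-cand-riskless} gives the key identity $(\cso)'(x)\,b(x)=-\frac{\del\rho(x-x_0)}{\gam(1+\rho x)}\cso(x)$, which upon substitution collapses the display to
\[
\frac{\dd}{\dd t}C^*(t)=-\rho\,C^*(t)\,\Phi\big(X^*(t)\big),\qquad \Phi(x):=1+\frac{\del}{\gam}\cdot\frac{x-x_0}{1+\rho x}-\cso(x).
\]
Since $C^*>0$, the map $C^*$ is hump-shaped iff $t\mapsto\Phi\big(X^*(t)\big)$ is negative for small $t$, positive for large $t$, and changes sign exactly once.

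Next I would evaluate $\Phi$ at the relevant points: $\Phi(x_0)=(\del-r)/\rho$, $\Phi(\xa)=1+\frac{\del}{\gam}\frac{\xa-x_0}{1+\rho\xa}-\al$ (using $\cso(\xa)=\al$), and $\Phi(x)\to-\infty$ as $x\to+\infty$ because $\cso(x)\to+\infty$ (as $J(-x)\to0^+$ in Proposition \ref{prop:HJB-riskless}). By Corollary \ref{cor:optimconsum_riskless}, $X^*(t)$ is monotone with limit $\xu$ (impatient case, where $\del\ge r+\rho(1-\al)>r$ and $\Phi(\xu)\ge1-\al\ge0$) or $x_0$ (otherwise); a hump needs $\frac{\dd}{\dd t}C^*\le0$ eventually, hence $\Phi$ nonnegative at that limit, which forces $\del\ge r$. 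When $\del<r$ the limit is $x_0$ with $\Phi(x_0)<0$, so $C^*$ is eventually strictly increasing and no hump occurs; the borderline $\del=r$ is a degenerate case handled by a separate direct computation. So assume $\del>r$.

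The crux is uniqueness of the zeros of $\Phi$. At a zero $x^*$ the relation $\cso(x^*)=1+\frac{\del}{\gam}\frac{x^*-x_0}{1+\rho x^*}$ lets one eliminate $\cso(x^*)$ from $b(x^*)$ and from the key identity, giving
\[
\Phi'(x^*)=\frac{N(x^*-x_0)}{\gam\,(1+\rho x^*)^2\,b(x^*)},\qquad
N(A):=\frac{(r-\del)(r+\rho)^2}{\del\rho}+(r+\rho)\Big(r+\rho-\frac{\del}{\gam}\Big)A+\frac{\del\rho(\gam\rho+\del)}{\gam}A^2.
\]
The quadratic $N$ opens upward with $N(0)=(r-\del)(r+\rho)^2/(\del\rho)<0$, so it has one negative root and one positive root $A_+$, and is negative precisely between them. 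Combining this with the signs of $b$ from Lemma \ref{lem:CS-comparison} ($b(x)<0$ for $x>x_0$, and $b(x)>0$ for $\xa<x<x_0$ in the patient case) and with $\Phi(x_0)>0$, $\Phi(+\infty)=-\infty$, one obtains the sign pattern of $\Phi$: on $(x_0,+\infty)$ the smallest zero $\xH$ is a downcrossing, so $N(\xH-x_0)\ge0$, hence $\xH-x_0\ge A_+$; beyond $\xH$ one has $N>0$ and $b<0$, so $\Phi'<0$ at every further zero, ruling out any later upcrossing, so $\xH$ is the only zero of $\Phi$ in $(x_0,+\infty)$, with $\Phi>0$ on $(x_0,\xH)$, $\Phi<0$ on $(\xH,+\infty)$, and $\xH\ge\max\{x_0,\xu\}$. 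The symmetric argument on $(\xa,x_0)$ — where $b>0$, so $\operatorname{sign}\Phi'=\operatorname{sign}N$ at a zero — shows $\Phi$ has at most one zero $\xH'$ there, present exactly when $\Phi(\xa)<0$, with $\Phi<0$ on $(\xa,\xH')$ and $\Phi>0$ on $(\xH',x_0)$ (and $\Phi>0$ on all of $(\xa,x_0)$ if $\Phi(\xa)\ge0$). I expect this passage from the explicit sign data on $N$ and $b$ to the global sign pattern of $\Phi$ — the ``first zero is a downcrossing, hence lies past $A_+$, hence so do all later zeros'' bootstrap — to be the main obstacle, along with the bookkeeping needed to match it against the several sub-cases of Corollary \ref{cor:optimconsum_riskless}.

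Finally I would push this sign pattern through the monotone trajectory $X^*$. When $X^*$ decreases (impatient case, or patient case with $x>x_0$) its limit is $\le\xH$, so $\Phi\big(X^*(t)\big)$ runs from $\Phi(x)$ to a positive value, and the only sign change in the traversed interval is at $\xH$; hence $C^*$ is hump-shaped iff $\Phi(x)<0$, i.e.\ $x>\xH$, i.e.\ $w>\xH z$, the maximum being attained at the unique $\tau_h>0$ with $X^*(\tau_h)=\xH$; since $\xH$ solves $\Phi(\xH)=0$, this is \eqref{eq:xH} and case $(i)$. When $X^*$ increases (patient case with $\xa<x<x_0$) its limit is $x_0$, so $\Phi\big(X^*(t)\big)$ runs from $\Phi(x)$ to $\Phi(x_0)>0$ with its only possible sign change at $\xH'$; hence $C^*$ is hump-shaped iff $\xH'$ exists (i.e.\ $\Phi(\xa)<0$, the displayed inequality in $(ii)$) and $x<\xH'$, i.e.\ $\xa z<w<\xH' z$, the maximum at the unique $\tau_h'$ with $X^*(\tau_h')=\xH'$; since $\xH'$ solves $\Phi(\xH')=0$, this is \eqref{eq:xHp} and case $(ii)$. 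In every remaining configuration either $X^*$ is constant, or $X^*$ spends an initial interval in $[\xu,\xa]$, or $\Phi\big(X^*(t)\big)\ge0$ for all $t>0$, and in each case $C^*$ is not hump-shaped. The closing remarks are immediate: $(i)$ and $(ii)$ each require $\del>r$; and since $x\mapsto\frac{x_0-x}{1+\rho x}$ is decreasing and $\xa\ge\xu$, the inequality $\Phi(\xa)<0$ forces $1-\al<\frac{\del}{\gam}\cdot\frac{x_0-\xu}{1+\rho\xu}=\frac{r+\rho(1-\al)-\del}{\gam\rho}$ (the last equality by direct computation), i.e.\ $\gam<1-\frac{\del-r}{\rho(1-\al)}$, so $(ii)$ fails once $\gam>1-\frac{\del-r}{\rho(1-\al)}$.
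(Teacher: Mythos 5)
Your reduction coincides with the paper's: the identity $\frac{\dd}{\dd t}C^*(t)=-\rho\,C^*(t)\,\Phi\big(X^*(t)\big)$ with $\Phi$ equal to the function $g$ of \eqref{eq:g} is exactly Lemma \ref{lem:f}, your key identity $(\cso)'(x)\,b(x)=-\frac{\del\rho(x-x_0)}{\gam(1+\rho x)}\cso(x)$ is \eqref{eq:cSTAR-ODE}, the endpoint evaluations, the traversal of the monotone trajectories via Corollary \ref{cor:optimconsum_riskless}, and the closing computation showing $\gam>1-\frac{\del-r}{\rho(1-\al)}$ kills Condition $(ii)$ all match the paper. Where you genuinely deviate is the uniqueness/sign-pattern step: the paper establishes that $g$ is strictly monotone on $(\xa,x_0)$ and on $(x_0,+\infty)$ (using \eqref{eq:cSTAR-ODE} together with Lemma \ref{lem:CS-comparison}, cf.\ \eqref{eq:g-prime}), which immediately yields a single transversal zero on each side; you instead control $\Phi'$ only at zeros of $\Phi$, via the relation $\Phi'(x^*)=N(x^*-x_0)/\big[\gam(1+\rho x^*)^2 b(x^*)\big]$, whose algebra I checked and which is correct.

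The gap sits in the bootstrap you yourself flag as the crux. At the smallest zero $\xH$ of $\Phi$ in $\big[\max\{x_0,\xu\},+\infty\big)$ your argument gives only the weak inequality $\Phi'(\xH)\le 0$, i.e.\ $N(\xH-x_0)\ge 0$, i.e.\ $\xH\ge x_0+A_+$. The degenerate case $\xH=x_0+A_+$ with $\Phi'(\xH)=0$ — a tangential zero at which $\Phi$ touches $0$ from above and then returns to positive values before a final downcrossing at some $x_1>\xH$ — is consistent with everything you have proved: every zero beyond $\xH$ is then a downcrossing with $\Phi'<0$, so "no later upcrossing" produces no contradiction. In that configuration \eqref{eq:xH} would have two roots and the equivalence "hump iff $w>\xH z$" would fail for $w/z\in(\xH,x_1)$, i.e.\ the very statement you are proving would be false; so you must exclude it, either by a second-order analysis at a tangential zero (which requires differentiating \eqref{eq:cSTAR-ODE} once more) or, as the paper does, by proving monotonicity of $\Phi$ on the whole interval rather than sign information only at its zeros. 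The same unexcluded possibility (a tangential zero at $x_0+A_-$ adjacent to $x_0$) undermines the claimed uniqueness of $\xH'$ in \eqref{eq:xHp} on $(\xa,x_0)$. Finally, the borderline case $\del=r$ is not a routine afterthought in your framework: there $N(0)=0$ and $\Phi(x_0)=0$, so your sign analysis degenerates exactly where it is needed, and the "only if" direction of the proposition for $\del=r$ is left unproved by deferring to an unspecified "separate direct computation" (in the paper's route it follows at once from $g(x_0)=0$ combined with the monotonicity of $g$ on each side of $x_0$). Apart from these points the case bookkeeping and the sufficiency arguments are in order.
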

\begin{proof}
	See Appendix \ref{app:inverseU}.
\end{proof}

\cite{KraftMunkSeirfriedWagner2017} provided a nice interpretation for the presence of a consumption hump in a habit-formation model such as ours. At the initial time, the individual likes to increase her habit if she can afford it. However, starting with a high initial rate of consumption would lead to a high consumption habit and would diminish her utility of consumption (relative to habit). Instead, the individual starts with a lower rate of consumption and puts aside wealth for her future consumption. As time passes, less wealth is needed to fund future consumption, allowing the individual to increase her rate of consumption. At a certain age, however, the individual's impatience outweighs her concern for future habit, meaning that she prefers consuming more at that point, even if it leads to higher level of habit (and lower level of utility) later. 
In our model, as Proposition \ref{prop:inverseU} indicates, such a scenario applies to all individuals if they have a large enough initial wealth. Additionally, an individual with low level of initial wealth behaves in this way if the conditions in Proposition \ref{prop:inverseU}.(ii) hold, which can only be the case if $\gam<1-\frac{\del-r}{\rho(1-\al)}<1$.

\begin{remark}\label{rem:Kraft}
	\cite{KraftMunkSeirfriedWagner2017} provided a sufficient condition, in the form of three inequalities, for presence of consumption hump in their model. The first inequality (i.e. (24) therein) is $r<\del$. The third inequality (i.e. (26) therein) requires the initial wealth to be larger than some threshold (see equation (29) in \cite{KraftMunkSeirfriedWagner2017} and their discussion thereafter). The remaining inequality (i.e. (25) therein) is $\beta-\frac{\del-r}{\gam}>\al>0$, in which $\beta$ (respectively $\al$) corresponds to $\rho$ (respectively, $\al\rho$) in our model.\footnote{The habit process $h(t)$ in \cite{KraftMunkSeirfriedWagner2017} satisfies $\dd h(t) = [\al c(t)-\beta h(t)]\dd t$. The counterpart of $h(t)$ in our model is $\tilde{Z}(t) = \al Z(t)$. From \eqref{eq:Z2}, we obtain that $\dd \tilde{Z}_t = \al \dd Z_t = -\al\rho (Z(t) - C(t)) \dd t = \big(\al\rho C(t)-\rho \tilde{Z}(t)\big)\dd t$. So, $\beta$ (respectively, $\al$) in \cite{KraftMunkSeirfriedWagner2017} is $\rho$ (respectively, $\al\rho$) in our model.} 
	Therefore, Condition (25) of \cite{KraftMunkSeirfriedWagner2017} becomes 
	$\rho - \frac{\del-r}{\gam}>\al\rho$ or, equivalently, 
	$\gam>\frac{\del-r}{\rho(1-\al)}$.
	Thus, the sufficient conditions in \cite{KraftMunkSeirfriedWagner2017} are similar to Condition (i) of Proposition \ref{prop:inverseU} in that they both require $r<\del$ and that the initial wealth $w$ be larger than some threshold. Our Condition (i) is a weaker condition, however, since we don't require the extra lower bound on the risk-aversion parameter $\gam$ as in Condition (25) of \cite{KraftMunkSeirfriedWagner2017}.\footnote{Note that \cite{KraftMunkSeirfriedWagner2017} did not provide any economic explanation for Condition (25). It seems that this condition is adapted to simplify their arguments.}
		
	Condition $(ii)$ of Proposition \ref{prop:inverseU} has a different nature from those provided by \cite{KraftMunkSeirfriedWagner2017}. Firstly, it requires a ``moderate'' level of wealth, that is, a wealth that is not too small (i.e. $w>\xa z$) nor too large (i.e. $w<x_h' z$). This is in contrast to condition (26) of \cite{KraftMunkSeirfriedWagner2017} that only imposes a lower threshold for wealth. Secondly, our Condition $(ii)$ only holds for small values of the risk-aversion parameter $\gam$, as the last statement of Proposition \ref{prop:inverseU} states that Condition $(ii)$ implies that $0<\gam<1-\frac{\del-r}{\rho(1-\al)}<1$. This is in contrast with Condition (25) of \cite{KraftMunkSeirfriedWagner2017} that imposes a lower bound on $\gam$, namely, that $\gam>\frac{\del-r}{\rho(1-\al)}$.
	
	The numerical examples in Section \ref{sec:4} highlight another distinction between the two conditions. Condition $(i)$ (and those in \cite{KraftMunkSeirfriedWagner2017}) show consumption hump for \emph{rich and risk-averse} individuals, while Condition $(ii)$ is relevant for \emph{poor and more risk-seeking} (but patient) individuals. As Figure \ref{fig:Merton_Kraft} in Section \ref{sec:4} indicates (see also Figure 2 in \cite{KraftMunkSeirfriedWagner2017}), Condition $(i)$ holds for an wealth-to-habit ratio of $w/z=370/3.92=94.4$, and risk-aversion $\gam=4$. In contrast, Figure \ref{fig:conshump_lowWealth} of Section \ref{sec:4} shows that Condition $(ii)$ holds for an initial wealth-to-habit ratio of $w/z=2.6$ and risk-aversion $\gam=0.05$.
	
	Finally, note that our conditions are necessary and sufficient, while \cite{KraftMunkSeirfriedWagner2017} only provided sufficient conditions for consumption hump in their model. Thus, the fact that Condition $(ii)$ in Proposition 3.4 does not reconcile with those provided by \cite{KraftMunkSeirfriedWagner2017} does not mean that there is an inconsistency between our results and theirs. Indeed, there may still be scenarios in \cite{KraftMunkSeirfriedWagner2017} with hump-shaped consumption that is not included in their sufficient conditions. Furthermore, the utility functions between the two models are different (we use a multiplicative utility while \cite{KraftMunkSeirfriedWagner2017} used a classical habit-formation utility), which may also lead to different conditions for presence of consumption hump.\qed
\end{remark}

\subsection{The logarithmic utility function}\label{sub:Log}
In this subsection, we consider the following stochastic control problem,
\begin{align}\label{eq:VF_log}
	V_{\log}(x) := \sup_{c(\cdot)\in\Ac(x)} \int_{0}^{+\infty}\ee^{-\del t} \log\big(c(t)\big)\, \dd t;\quad x\ge \xu.
\end{align}
That is, we replace the power utility function in \eqref{eq:VF-riskless} with a logarithmic utility function. As we will show, $V_{\log}$ and the corresponding optimal consumption policy are obtained from our earlier results by taking the limit $\gam\to 1$.

For $\gam\in(0,1)\cup(1,+\infty)$, define the value function
\begin{align}\label{eq:VF_alt}
	\Vt_{\gam}(x) := \sup_{c(\cdot)\in\Ac(x)} \int_{0}^{+\infty}\ee^{-\del t} \frac{\big(c(t)\big)^{1-\gam}-1}{1-\gam}\, \dd t;\quad x\ge \xu.
\end{align}
Note that $V_{\log}=\lim_{\gam\to1} \Vt_\gam$. By comparing $\Vt_\gam$ with $V$ given by \eqref{eq:VF-riskless}, we obtain that 
\begin{align}\label{eq:VF_VFalt}
	\Vt_\gam(x)=V(x)-\int_0^{+\infty}\frac{\ee^{-\del t}}{1-\gam}\dd t = V(x) -\frac{1}{\del(1-\gam)};\quad x\ge \xu,
\end{align}
and that the consumption policy $\cs$ in \eqref{eq:CS-sol-riskless} is also the optimal policy for \eqref{eq:VF_alt}. 
Let $\ut_\gam$ (respectively, $u_{\log}$) be the convex conjugate of $\Vt$ (respectively, $V_{\log}$), namely,
\begin{align}
	\ut_\gam(y) :=\sup_{x\ge \xu}\{\Vt_\gam(x)-xy\};\quad 0<y<\lim_{x\to\xu^+}\Vt'(x)=\lim_{x\to\xu^+}V'(x)=\yo,
\end{align}
and
\begin{align}
	\ut_{\log}(y) :=\sup_{x\ge \xu}\{V_{\log}(x)-xy\};\quad 0<y<\yo_{\log}:=\lim_{x\to\xu^+}V_{\log}'(x)=\lim_{\gam\to 1}\yo.
\end{align}
Recall that, by Propositions \ref{prop:u-solution-impatient} and \ref{prop:u-solution-patient}, we have $\yo=\frac{\al^{-\gam}}{1+\rho \xu}$ (respectively, $\yo=+\infty$) for impatient (respectively, patient) individuals. 
From \eqref{eq:VF_VFalt}, it follows that
\begin{align}\label{eq:ut_u}
	\ut_\gam(y)=\sup_{x\ge \xs}\{V(x)-xy\} -\frac{1}{\del(1-\gam)} = u(y) - \frac{1}{\del(1-\gam)},
	\quad 0<y\le \yo,
\end{align}
in which $u$ is the convex conjugate of $V$. We then obtain the following Corollary of Proposition \ref{prop:u-solution-impatient} for an impatient individual with logarithmic utility.

\begin{corollary}\label{cor:u-solution-impatient-log}
	Assume $\del\ge r+\rho(1-\al)$. There is a strictly increasing function $y:(0,\al^{-1}]\to\big(0,\al^{-1}/(1+\rho\xu)\big]$ satisfying
	\begin{align}\label{eq:y-ODE-impatient-log}
		\begin{cases}\displaystyle
			y'(\psi) = \frac{\frac{\rho}{r+\rho}\left(\frac{r+\rho-\del}{\rho} -\psi^{-1}\right)y(\psi)}{y(\psi)-\frac{\del}{r+\rho}\psi};\quad 0<\psi\le\al^{-1}, \vspace{0.5em} \\
			y\left(\al^{-1}\right) = \dfrac{\al^{-1}}{1+\rho\xu}=\frac{1}{\al}-\frac{\rho}{r+\rho}.
		\end{cases}
	\end{align}
	Furthermore, $0<y(\psi)<\del\psi/(r+\rho)$ for $0<\psi<\al^{-1}$. Finally,
	\begin{align}\label{eq:u-impatient-log}
		u_{\log}(y)=\frac{1}{\del}\left[-\log\big(\psi(y)\big)+\frac{r+\rho-\del}{\rho}\big(\psi(y)-y\big)\right],\quad 0<y\le \yo:=\frac{\al^{-1}}{1+\rho\xu},
	\end{align}
	in which $\psi = \psi(y)$ is the $($strictly increasing$)$ inverse of $y = y(\psi)$.\qed\vspace{1em}
\end{corollary}
\begin{proof}
	 Existence of $y(\psi)$ and its properties is obtained by setting $\gam=1$ in Proposition \ref{prop:u-solution-impatient}.$(i)$ (note that the proof of Proposition \ref{prop:u-solution-impatient}.$(i)$ is valid for $\gam=1$). From \eqref{eq:ut_u} and Proposition \ref{prop:u-solution-impatient}.$(ii)$, it then follows that
	 \begin{align}
	 	\ut_\gam(y)&=\frac{1}{\del}\left[\frac{\gam}{1-\gam}\big(\psi(y)\big)^{1-\frac{1}{\gam}}+\frac{r+\rho-\del}{\rho}\big(\psi(y)-y\big)\right]
	 	- \frac{1}{\del(1-\gam)}\\
	 	&=\frac{1}{\del}\left[-\frac{\big(\psi(y)\big)^{1-\frac{1}{\gam}} - 1}{1-\frac{1}{\gam}}+\frac{r+\rho-\del}{\rho}\big(\psi(y)-y\big)\right],
	 \end{align}
	 for $0<y\le \frac{\al^{-\gam}}{1+\rho\xu}$. Taking the limit $\gam\to1$ then yields \eqref{eq:u-impatient-log}.
\end{proof}

With a similar argument, we obtain the following corollary of Proposition \ref{prop:u-solution-patient} for a patient individual with logarithmic utility function. Its proof is omitted since it is similar to the previous proof.

\begin{corollary}\label{cor:u-solution-patient-log}
	Assume $0<\del< r+\rho(1-\al)$. Define the constants $\psi_0:=\frac{\rho}{r+\rho-\del}\in(0,\al^{-\gam})$ and $y_0:=\frac{\del\psi_0}{r+\rho} \in \big(0,\frac{\al^{-1}}{1+\rho\xu} \big)$. There exist a constant $\ys \in \big(y_0,\frac{\al^{-1}}{1+\rho\xu} \big)$ and a strictly increasing function $y:(0,\al^{-1}]\to\big(0,\ys\big]$ satisfying
	\begin{align}\label{eq:y-FBP-log}
		\begin{cases}\displaystyle
			y'(\psi) = \frac{\frac{\rho}{r+\rho}\left(\frac{r+\rho-\del}{\rho} -\frac{1}{\psi}\right)y(\psi)}{y(\psi)-\frac{\del}{r+\rho}\psi};\quad 0<\psi\le\al^{-1},\\
			y\left(\al^{-1}\right) = \ys.
		\end{cases}
	\end{align}
	Furthermore, $\max\left(0, \psi-\frac{\rho}{r+\rho}\right) < y(\psi) < \frac{\del}{r+\rho}\psi$ for $0<\psi<\psi_0$, and $\frac{\del}{r+\rho}\psi < y(\psi) < \psi-\frac{\rho}{r+\rho}$ for $\psi_0<\psi<\al^{-1}$. Finally, 
	\begin{align}\label{eq:u_patient_sol1-log}
		u_{\log}(y)	= \frac{r+\rho(1-\al) - \del}{\del\rho}\big(\al^{-1} - \ys(1+\rho\xu)\big)
		\left(\frac{y}{\ys}\right)^{-\frac{\del}{r+\rho(1-\al)-\del}}
		-\xu y+\frac{1}{\del}\log(\al);\quad y>\ys,
	\end{align}
	and
	\begin{align}\label{eq:u_patient_sol2-log}
		u_{\log}(y) = -\frac{1}{\del}\log\big(\psi(y)\big)
		+\frac{r+\rho-\del}{\rho\del}\big(\psi(y)-y\big);\quad 0<y\le \ys.
	\end{align}\vspace{1ex}
	in which $\psi = \psi(y)$ is the $($strictly increasing$)$ inverse of $y = y(\psi)$.\qed
\end{corollary}

With $u_{\log}$ at hand, we may then obtain $V_{\log}$ by inverting the  as in Proposition 3.3. The verification argument is very similar to the proof of Theorem 3.1. We thus get the following result.
\begin{theorem}\label{prop:HJB-riskless-log}
	If $\del\ge r+\rho(1-\al))$, let $\yo=\ys=\frac{\al^{-1}}{1+\rho\xu}$ and let $\psi$ and $u_{\log}$ be as in Corollary  \ref{cor:u-solution-impatient-log}. If $0<\del< r+\rho(1-\al))$, let $\yo=+\infty$ and let $\ys$, $\psi$ and $u_{\log}$ be as in Corollary \ref{cor:u-solution-patient-log}. Furthermore, define $J:(-\infty, -\xu)\to(0,\yo)$ to be the inverse of $u'_{\log}$, that is, $u_{\log}'\big(J(\xi)\big)=\xi$ for $\xi<-\xu$. Then, the value function $V_{\log}$ in \eqref{eq:VF_log} is
	\begin{align}
		V_{\log}(x) =
		\begin{cases}
			u_{\log}\big(J(-x)\big) + x J(-x);&\quad x>\xu,\vspace{1ex}\\
			\displaystyle
			\lim_{x\to\xu^+} \Big(u_{\log}\big(J(-x)\big) + x J(-x)\Big);&\quad x=\xu.
		\end{cases}
	\end{align}
	Furthermore, define $\xs := -u_{\log}'(\ys) = \frac{\al^{-1}}{\rho\ys}-\frac{1}{\rho}$ and
	\begin{align}\label{eq:CS-sol-riskless-log}
		\cso(x) :=
		\begin{cases}
			\al; &\quad \xu\le x\le \xs,\vspace{1ex}\\
			\displaystyle
			\frac{1}{\psi\big(J(-x)\big)};&\quad x>\xs.
		\end{cases}
	\end{align}
	Then, an optimal consumption-to-habit policy is given by $\big\{\cso\big(X^*(t)\big) \big\}_{t\ge0}$, in which $X^* = \big\{X^*(t) \big\}_{t \ge 0}$ solves
	$\frac{\dd}{\dd t}X^*(t) = (r+\rho)X^*(t) - \big(1+\rho X^*(t)\big) \cso\big(X^*(t)\big)$, $t\ge0$, with $X^*(0) = x$.\qed
\end{theorem}

\subsection{Optimal consumption policy under constant income and borrowing limit}\label{sub:Income}

In this subsection, we investigate the effect of adding a fixed lifetime income and a fixed borrowing limit to our model. In the setting of Section \ref{sec:2}, assume that the individual receives a fixed income rate $\eta\ge 0$. Let $\Wh(t)$ denote the individual's wealth at time $t\ge0$. Given a consumption process $\{C(t)\}_{t\ge0}$, we then have
\begin{align}\label{eq:wealth-income}
	\frac{\dd \Wh(t)}{\dd t} &= r\,\Wh(t) - C(t) + \eta,
\end{align}
for $t\ge0$, with $\Wh(0)=\wh$ denoting the individual's initial wealth. Assume further that the individual can borrow against her future income. Since the present value of her future income at time $t$ is $\eta/r=\int_t^{+\infty} \eta\ee^{-r (s-t}\dd s$, it makes sense to set the borrowing limit to $\eta/r$. In other words, to avoid bankruptcy, we must have
\begin{align}\label{eq:no_bankruptcy_income}
	\Wh(t)\ge -\frac{\eta}{r}, \quad t\ge0.
\end{align}
Let us first see how the addition of income and borrowing limit changes the safe level given by Lemma \ref{lem:NoRuin}. To this end, consider a new process, which we call ``\emph{effective wealth}'', given by $W(t):=\Wh(t)+\eta/r$, $t\ge0$. From \eqref{eq:wealth-income}, it follows that $\{W(t)\}_{t\ge0}$ satisfy \eqref{eq:wealth} with $W(0)=w:=\wh+\eta/r$. Furthermore, the no-bankruptcy condition \eqref{eq:no_bankruptcy_income} yields that $W(t)\ge 0$ for $t\ge0$. Therefore, by the change-of-variable  $W(t):=\Wh(t)+\eta/r$, we recover the setting of Section \ref{sec:2}. In other words, all of our results are applicable to the above setting once we replace wealth $\Wh$ with effective wealth $W\equiv\Wh+\eta/r$. For instance, Lemma \ref{lem:NoRuin} yields the following corollary.
\begin{corollary}\label{cor:NoRuin-income}
	Assume that $C:\Rb_+\to \Rb_+$ is a measurable function satisfying \eqref{eq:Habit}, in which $\{Z(t)\}_{t\ge0}$ is given by \eqref{eq:Z}. Define the wealth process $\{\Wh(t)\}_{t\ge0}$ by \eqref{eq:wealth-income}. Then, $\Wh(t)>\eta/r$ for all $t\ge0$ if and only if
	\begin{align}\label{eq:NoRuin-incom}
		\Wh(t) \ge \xu Z(t) - \frac{\eta}{r},
	\end{align}
	for all $t\ge0$, with $\xu$ given by \eqref{eq:xu}.
\end{corollary}
As expected, the addition of income and borrowing limit reduces the safe level of wealth. We then define the set of \emph{admissible} investment and consumption policies as those that avoid bankruptcy while satisfying the individual's consumption habit-formation constraint.

\begin{definition}\label{def:Admissible0-income}
	Let $\widehat{\Ac}(\wh,z)$ be the set of all measurable functions $C:\Rb_+\to \Rb_+$ 
	such that $C(t) \ge \al Z(t)$ and $\Wh(t)\ge \xu Z(t)-\eta/r$ for all $t\ge0$, 
	in which $\Wh$ and $Z$ are given by \eqref{eq:wealth-income} and \eqref{eq:Z}, respectively. \qed
\end{definition}

Next, we derive the optimal consumption policy by considering the value function
\begin{align}
	\Vh(\wh,z) := \sup_{C\in\widehat{\Ac}(\wh,z)} 
	\int_{0}^{+\infty}\ee^{-\del t} \frac{1}{1-\gam}\left(\frac{C(t)}{Z(t)}\right)^{1-\gam}\, \dd t;\quad z>0, \wh\ge \xu z-\frac{\eta}{r}.
\end{align}
The following corollary of Theorem \ref{thm:VF-riskless} yields the solution of this stochastic control problem.

\begin{corollary}\label{corol:VF-riskless}
	Let $V$ and $\cso$ be as in Theorem 3.1. We have $\Vh(\wh,z) = V\left(\frac{\wh+\eta/r}{z}\right)$ for $z>0$ and $\wh\ge \xu z-\frac{\eta}{r}$. Furthermore, an optimal consumption policy is given by $C^*(t):=Z^*(t)\cso\left(\frac{\Wh^*(t)+\eta/r}{Z^*(t)}\right)$, $t\ge0$, in which $\big\{\Wh^*(t)\big\}_{t \ge 0}$ and $\big\{Z^*(t)\big\}_{t \ge 0}$ are the solution of the ODE system
	\begin{align}\label{eq:WZ-income}
		\begin{cases}
			\displaystyle
			\frac{\dd \Wh^*(t)}{\dd t} = r\,\Wh^*(t) - \cso\left(\frac{\Wh^*(t)+\eta/r}{Z^*(t)}\right) + \eta,\quad t>0\vspace{1ex}\\
			\displaystyle
			\frac{\dd Z^*(t)}{\dd t} = -\rho\left(Z^*(t) - \cso\left(\frac{\Wh^*(t)+\eta/r}{Z^*(t)}\right)\right),\quad t>0
		\end{cases}
	\end{align}
	for $t\ge0$, and with the initial conditions $\Wh^*(0) = \wh$ and $Z^*(0)=z$.\qed
\end{corollary}
\vspace{1em}

In particular, assuming a wealth $w$ and habit $z$, the addition of income and borrowing limit increases consumption rate from $z\cso(w/z)$ to $z\cso\big((w+\eta/r)/z\big)$. Impatient and patient consumption patterns still exist and are identified by whether or not $\del\ge r+\rho(1-\al)$. However, consumption levels are generally higher. For instance, a patient individual requires a smaller wealth to increase her consumption above it minimum $\al Z(t)$ (i.e. $W(t)>\xa Z(t)-\eta/r$ instead of $W(t)>\xa Z(t)$). Their long term consumption-to-habit ratio is unchanged $\lim_{t\to+\infty} C^*(t)/Z^*(t) = c_0=\frac{r+\rho-\del}{\rho}$, while their long term wealth is generally lower since $\lim_{t\to+\infty}\frac{\Wh^*(t)+\eta/r}{Z^*(t)} = x_0=\frac{r+\rho-\del}{\del\rho}$.

%
%

\section{Properties of the optimal solution}\label{sec:4}

We end this paper with a discussion of the behavior of the optimal consumption and wealth functions. To prepare for this discussion, observe that, by \eqref{eq:X-riskless}, if $c(t)=\frac{(r+\rho)X(t)}{1+\rho X(t)}$, then the agent's wealth-to-habit ratio remains fixed. Similarly, if $c(t)>\frac{(r+\rho)X(t)}{1+\rho X(t)}$ (resp.\ $<$), then the wealth-to-habit ratio decreases (resp.\ increases).

We can interpret the optimal policy function $\cso$ as follows. If $\del\ge r+\rho(1-\al)$, then the individual is ``impatient'' and, by Corollary \ref{cor:optimconsum_riskless}(ii), she wishes to consume more now rather than later, that is, she chooses a consumption-to-habit ratio of $\cso(X(t))> \frac{(r+\rho)X(t)}{1+\rho X(t)}$, which implies that her wealth-to-habit ratio decreases with the eventual limit (that is, as $t \to +\infty$) of $\xu$. The top plot of Figure \ref{fig:c0_impatient} illustrates this scenario.

Assume, on the other hand, that the individual is ``patient,'' meaning $0<\del<r+\rho(1-\al)$. Then, Corollary \ref{cor:optimconsum_riskless}(iii)-(v) implies that she ``aspires'' to achieve a wealth-to-habit ratio of $x_0:=\frac{r+\rho-\del}{\del\rho}$ and the consumption-to-habit ratio of $c_0:=\frac{r+\rho-\del}{\rho}$ in the following sense: If $X(t)<x_0$, then her optimal consumption-to-habit is $\cso(X(t))<\frac{(r+\rho)X(t)}{1+\rho X(t)}$.  In other words, because her wealth-to-habit ratio is low enough, it is optimal for her to consume less, which thereby increases that ratio.  Conversely, if $X(t)>x_0$, her optimal consumption-to-habit ratio is $\cso(X(t))>\frac{(r+\rho)X(t)}{1+\rho X(t)}$, which causes her wealth-to-habit ratio to decrease. Finally, if $X(t)=x_0$, then the optimal consumption-to-habit ratio equals $\cso(x_0)=c_0=\frac{(r+\rho)x_0}{1+\rho x_0}$, which maintains the level of wealth-to-habit ratio at $x_0$. In other words, the individual is content with the wealth-to-habit ratio of $x_0$ in this scenario. See the bottom plot of Figure \ref{fig:c0_impatient} for an illustration of this scenario.

Next, we illustrate the optimal consumption and wealth as a function of time. Figure \ref{fig:xcs_impatient} shows the sample paths for an impatient individual. The top left plot shows the optimal wealth-to-habit function $t\mapsto X^*(t)$, that is, the solution of \eqref{eq:XStar-riskless}. Note that $X^*$ is a decreasing function and approaches $\xu$ as $t \to +\infty$. The top right plot shows the corresponding consumption-to-habit path $t\mapsto \cs\big(X^*(t)\big)$, which decreases to $\al$ as $t \to +\infty$. The bottom plots show the corresponding optimal wealth paths $t\mapsto W^*(t)$ and $t\mapsto C^*(t)$ that are found by setting $c\equiv t\mapsto \cs\big(X^*(t)\big)$ in Proposition \ref{prop:Admiss}.

\begin{figure}[t]
	\centerline{
		\adjustbox{trim={0.0\width} {0.04\height} {0.2\width} {0.0\height},clip}
		{\includegraphics[scale=0.31, page=1]{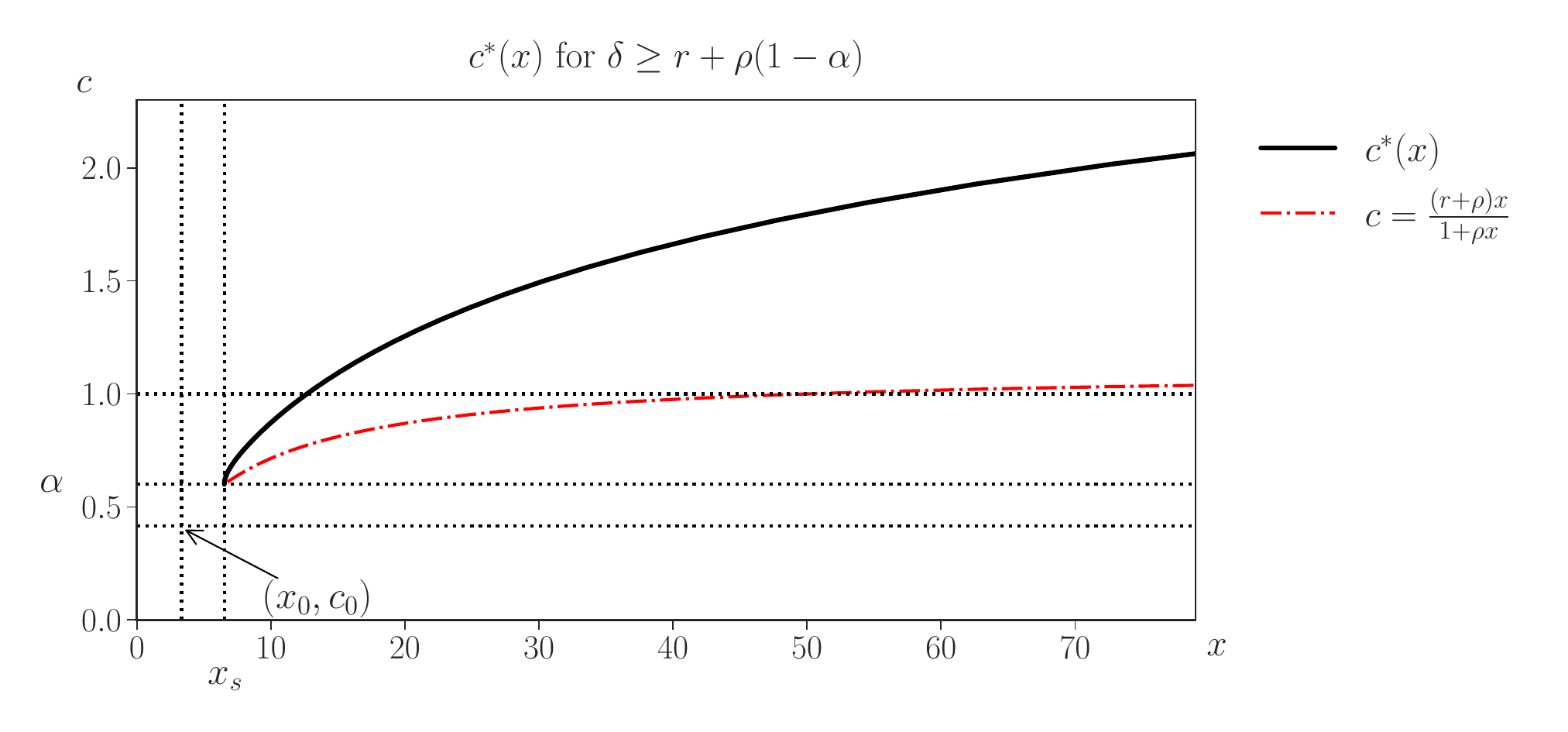}}
		\hspace{1em}
		\adjustbox{trim={0.0\width} {0.0\height} {0.0\width} {0.0\height},clip}
		{\includegraphics[scale=0.3, page=1]{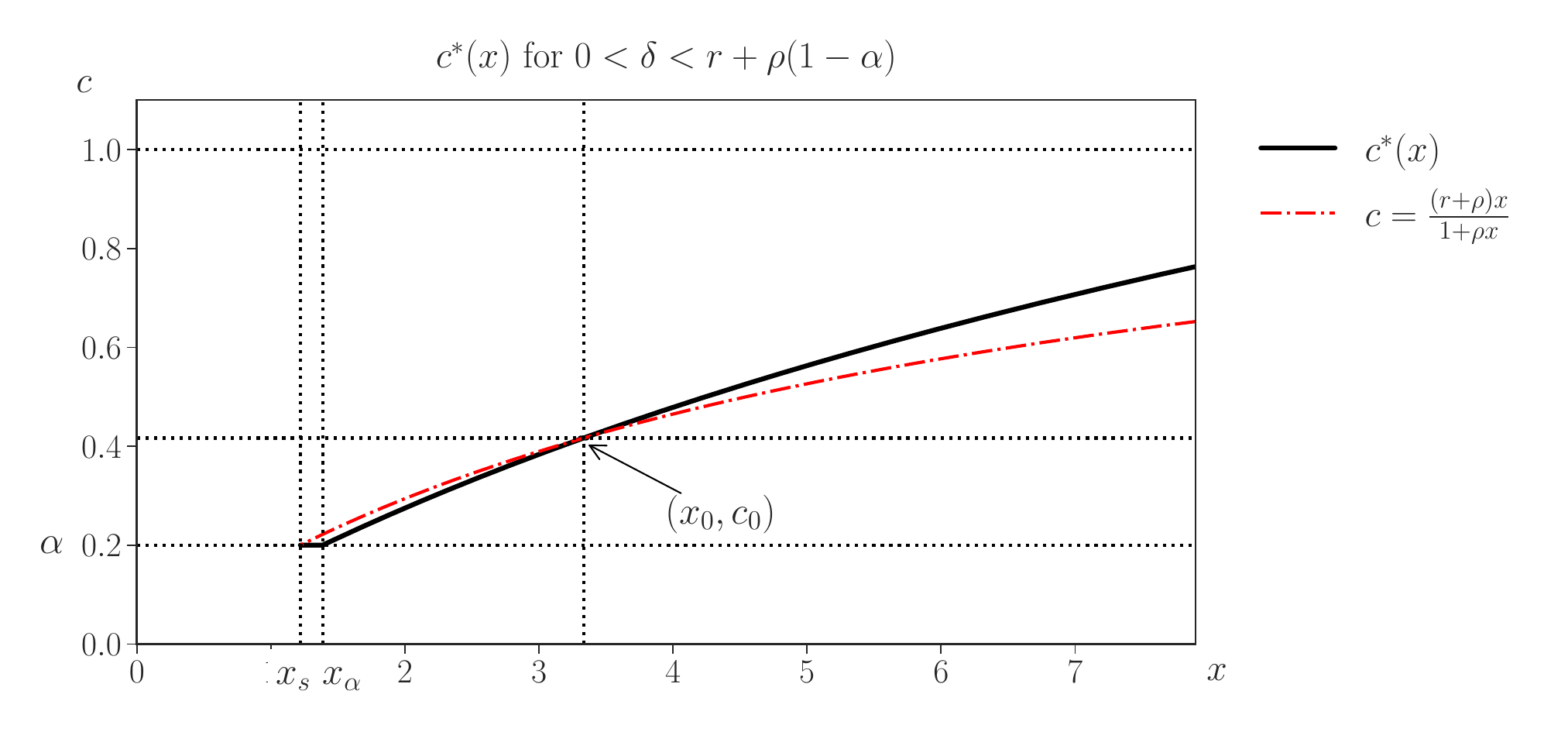}}
	}
	\caption{
		\textbf{Top:} The optimal consumption-to-habit function $\cso(x)$ (solid line)
		for an impatient individual, namely, one for whom $\del\ge r+\rho(1-\al)$.  In this case, $\cso(x)>\frac{(r+\rho)x}{1+\rho x}$ for all $x \ge \xu$ because an impatient person prefers to consume more now rather than later.  The effect is that it is optimal to consume in such a way that to the wealth-to-habit ratio continually decreases and eventually reaches the lowest possible ratio of $\xu$. 
		\textbf{Bottom:} The optimal consumption-to-habit function $\cso(x)$ (the solid line)
		for a patient individual, namely, one for whom $0<\del<r+\rho(1-\al)$. Note that, by \eqref{eq:X-riskless}, if $c(t)>\frac{(r+\rho)X(t)}{1+\rho X(t)}$ (resp.\ $<$), then the wealth-to-habit ratio decreases (resp.\ increases). Since $\cso(x)<\frac{(r+\rho)x}{1+\rho x}$ for $\xu\le x<x_0$, the optimal consumption causes the wealth-to-habit ratio to increase. If, on the other hand, $x>x_0$, then $\cso(x)>\frac{(r+\rho)x}{1+\rho x}$, which causes the wealth-to-habit ratio to decrease. In other words, the optimal consumption policy of a patient individual moves towards the consumption-to-habit ratio of $c_0$, which corresponds to a wealth-to-habit ratio of $x_0$. We use the following set of values for the parameters: $r=0.02$, $\rho=0.18$, $\del=0.125$, and $\gam=2$. We have chosen $\al=0.6$ for the case $0<\del<r+\rho(1-\al)$, and $\al=0.2$ for the case $\del>r+\rho(1-\al)$.
		\label{fig:c0_impatient}}
\end{figure}

\begin{figure}[p]
	\centerline{
		\adjustbox{trim={0.0\width} {0.0\height} {0.0\width} {0.0\height},clip}
		{\includegraphics[scale=0.3, page=1]{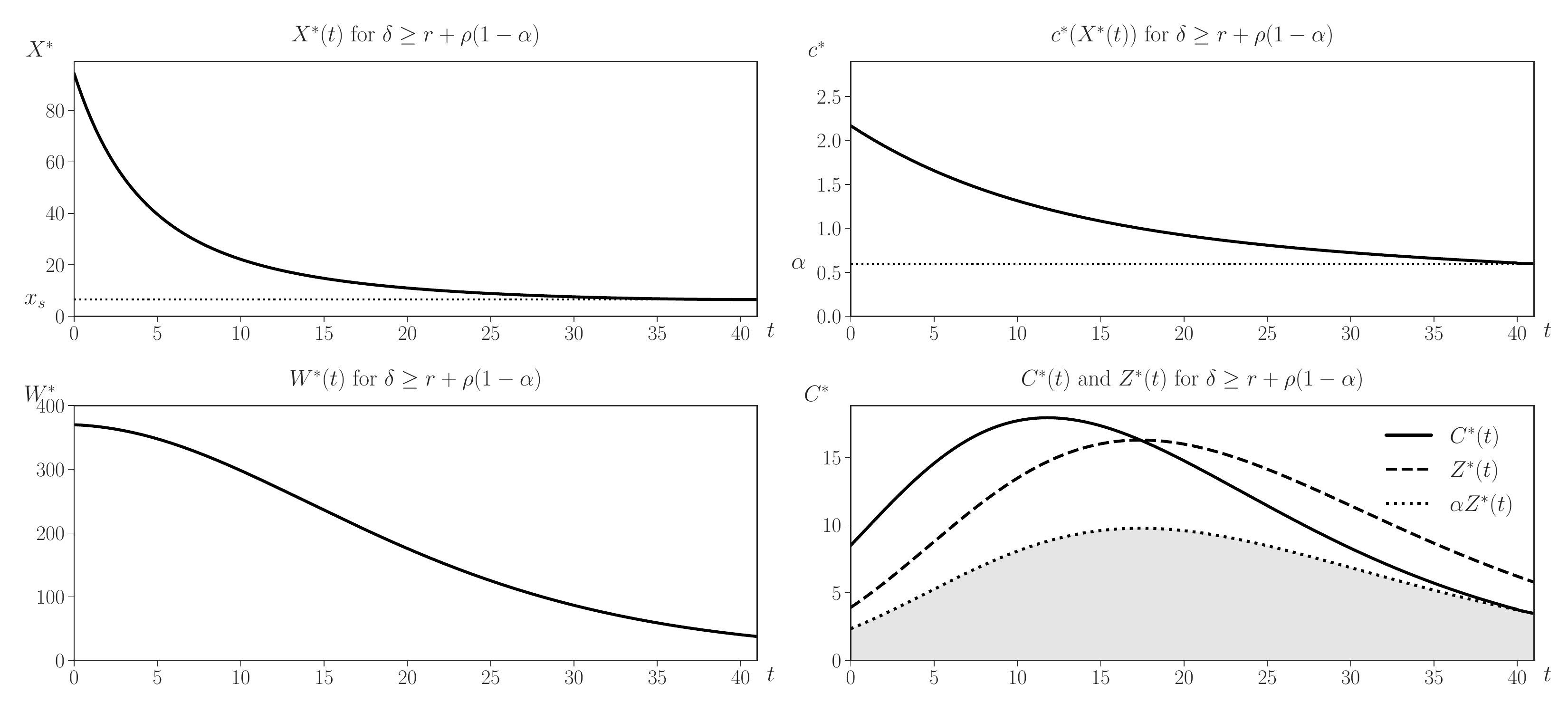}}
	}
	\caption{
		Sample paths of the optimal relative and absolute wealth and consumption for an impatient individual, that is, $\del>r+\rho(1-\al)$. \textbf{Top left:} The optimal wealth-to-habit ratio $X^*(t)$, $t\ge0$, is decreasing and approaches its minimum value $\xu$ as $t \to +\infty$. \textbf{Top right:} The consumption-to-habit ratio is $\cs\big(X^*(t)\big)$, $t\ge0$ is decreasing and approaches its minimum value $\al$ as $t \to +\infty$. \textbf{Bottom left:} Path of the optimal wealth $W^*$. \textbf{Bottom right:} Paths of the optimal consumption rate $C^*$ (solid line) and the corresponding habit $Z^*$ (the dashed line). Note that there is a consumption hump at about 15 years. The shaded region represent infeasible consumption rates, that is, values below $\al Z^*$. We have chosen $w=370$ and $z=3.92$. The remaining parameters are as in Figure \ref{fig:c0_impatient}.
		\label{fig:xcs_impatient}}
	\vspace{1em}
	
	\centerline{
		\adjustbox{trim={0.0\width} {0.0\height} {0.0\width} {0.0\height},clip}
		{\includegraphics[scale=0.3, page=1]{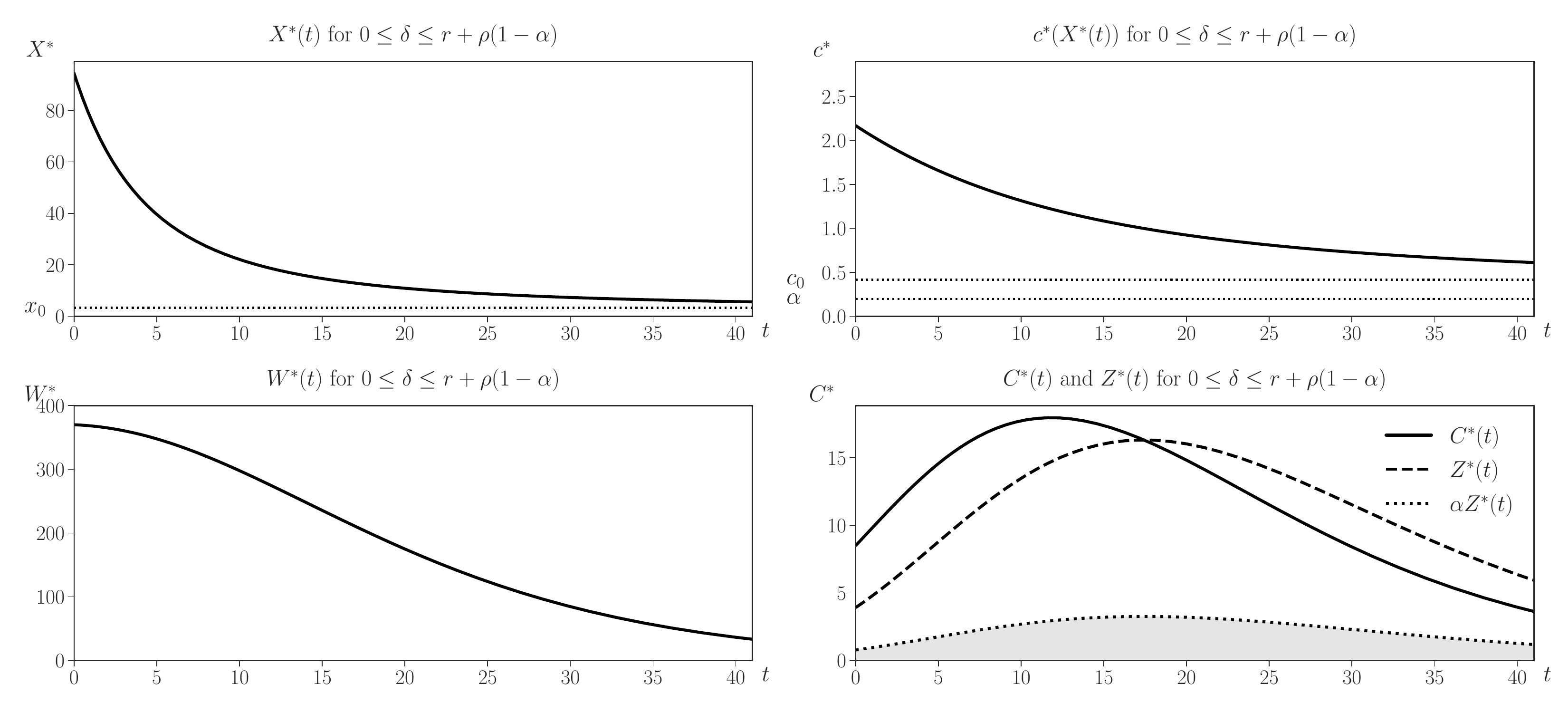}}
	}
	\caption{Sample paths of the optimal relative and absolute wealth and consumption for a patient individual, that is, $0<\del<r+\rho(1-\al)$, with an initial wealth-to-habit ratio $x>x_0:=(r+\rho-\del)/(\del\rho)$.
		\vspace{1em}
	\label{fig:xcs_patient1}}
\end{figure}

\begin{figure}[p]
	\centerline{
		\adjustbox{trim={0.0\width} {0.0\height} {0.0\width} {0.0\height},clip}
		{\includegraphics[scale=0.3, page=1]{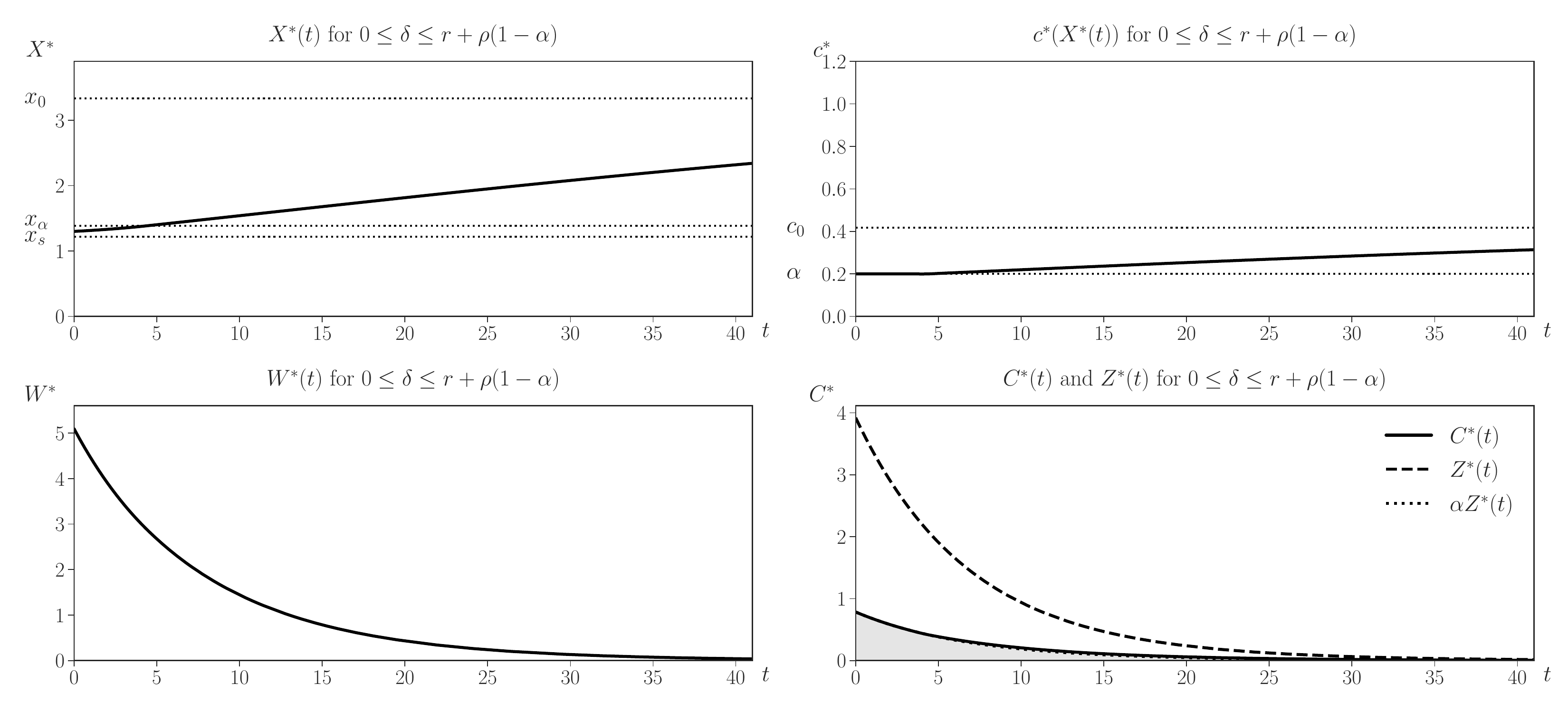}}
	}
	\caption{
		Counterpart of Figure \ref{fig:xcs_patient1} for an initial wealth-to-habit ratio $x<\xs$.
		\label{fig:xcs_patient2}}
	\vspace{1em}
	
	\centerline{
		\adjustbox{trim={0.0\width} {0.0\height} {0.0\width} {0.0\height},clip}
		{\includegraphics[scale=0.3, page=1]{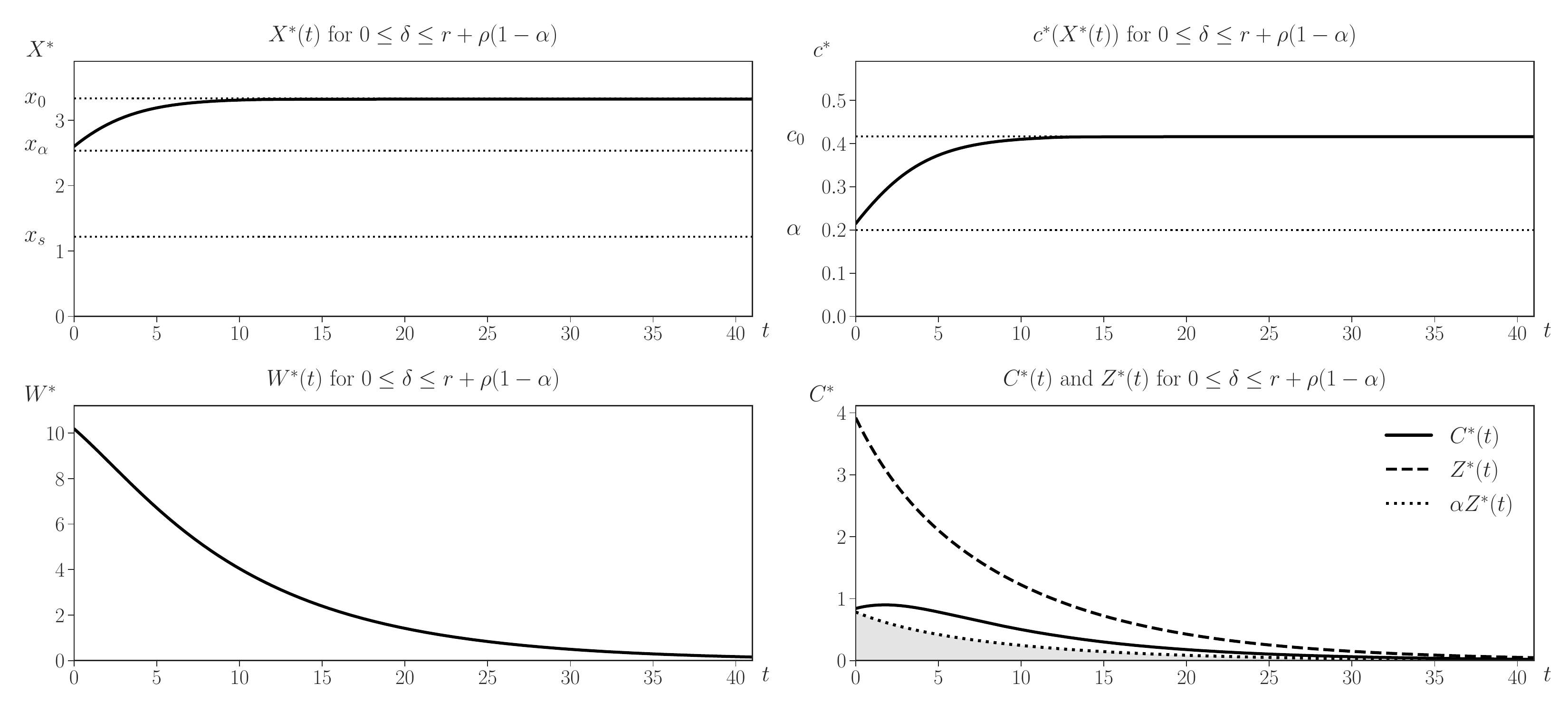}}
	}
	\caption{
		An example for presence of a consumption hump with low level of wealth-to-habit ratio, according to Proposition \ref{prop:inverseU}.(ii). As the bottom right plot indicates, there is a consumption hump at about 2 years. The values of the parameters are as follows: $r=0.02$, $\rho=0.18$, $\al=0.2$, $\del=0.125$, $\gam=0.05$, $z=3.92$, and $w=2.6 z=10.2$. For this case, $\xa=2.5371$ and $x_0=3.3333$. Therefore, $1+\frac{\del}{\gam}\frac{\xa-x_0}{1+\rho \xa} -\al = -0.567 < 0$. By numerically solving \eqref{eq:xHp}, we obtain $\xH'=2.90145$. Thus, the conditions of Proposition \ref{prop:inverseU}.(ii) hold as long as $w/z\in[\xa=2.5371, \xH'=2.90145)$, and we have chosen $w/z=2.6$.
		\vspace{1em}
	\label{fig:conshump_lowWealth}}
\end{figure}

\begin{figure}[p]
	\centerline{
		\adjustbox{trim={0.0\width} {0.0\height} {0.225\width} {0.0\height},clip}
		{\includegraphics[scale=0.3, page=1]{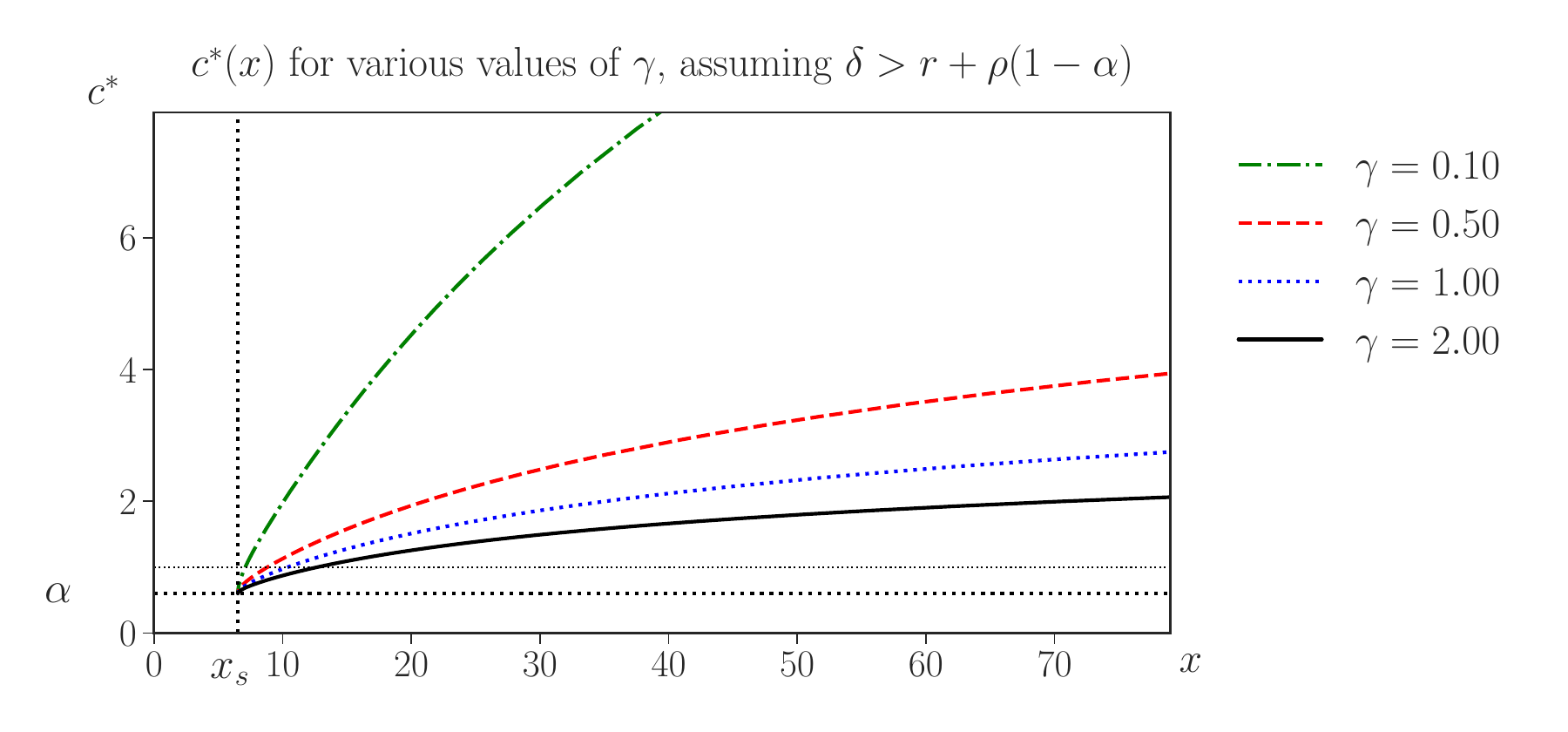}}
		\hspace{1em}
		\adjustbox{trim={0.0\width} {0.0\height} {0.0\width} {0.0\height},clip}
		{\includegraphics[scale=0.33, page=1]{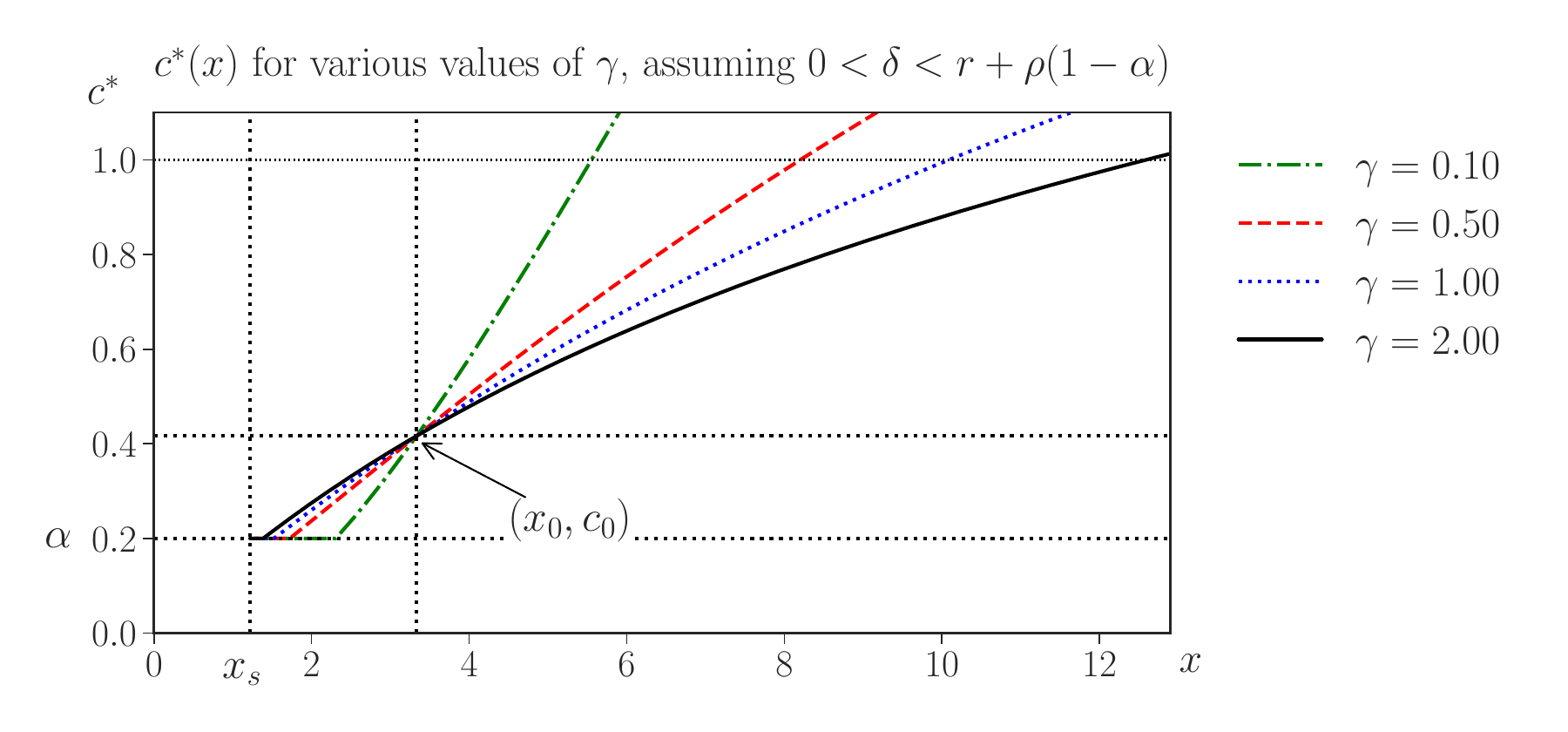}}
	}
	\caption{
		Sensitivity of the optimal consumption-to-habit function $\cso(x)$ to the risk aversion parameter $\gam$. The case $\gam=1$ corresponds to the logarithmic utility function as discussed in Subsection \ref{sub:Log}.
		\textbf{Top:} For impatient individuals (those with $\del>r+\rho(1-\al)$), higher risk aversion decreases optimal consumption at all levels of wealth-to-habit ratio.	\textbf{Bottom:}  For patient individuals (those with $0<\del<r+\rho(1-\al)$), higher risk aversion decreases (resp.\ increases) optimal consumption if wealth-to-habit ratio is above (resp.\ below) $x_0$. Note, that $\xa$ is decreasing in $\gam$. 
		\label{fig:gam_sens}}
	\vspace{1em}
\end{figure}

\begin{figure}[p]
	\centerline{
		\adjustbox{trim={0.0\width} {0.0\height} {0.0\width} {0.0\height},clip}
		{\includegraphics[scale=0.325, page=1]{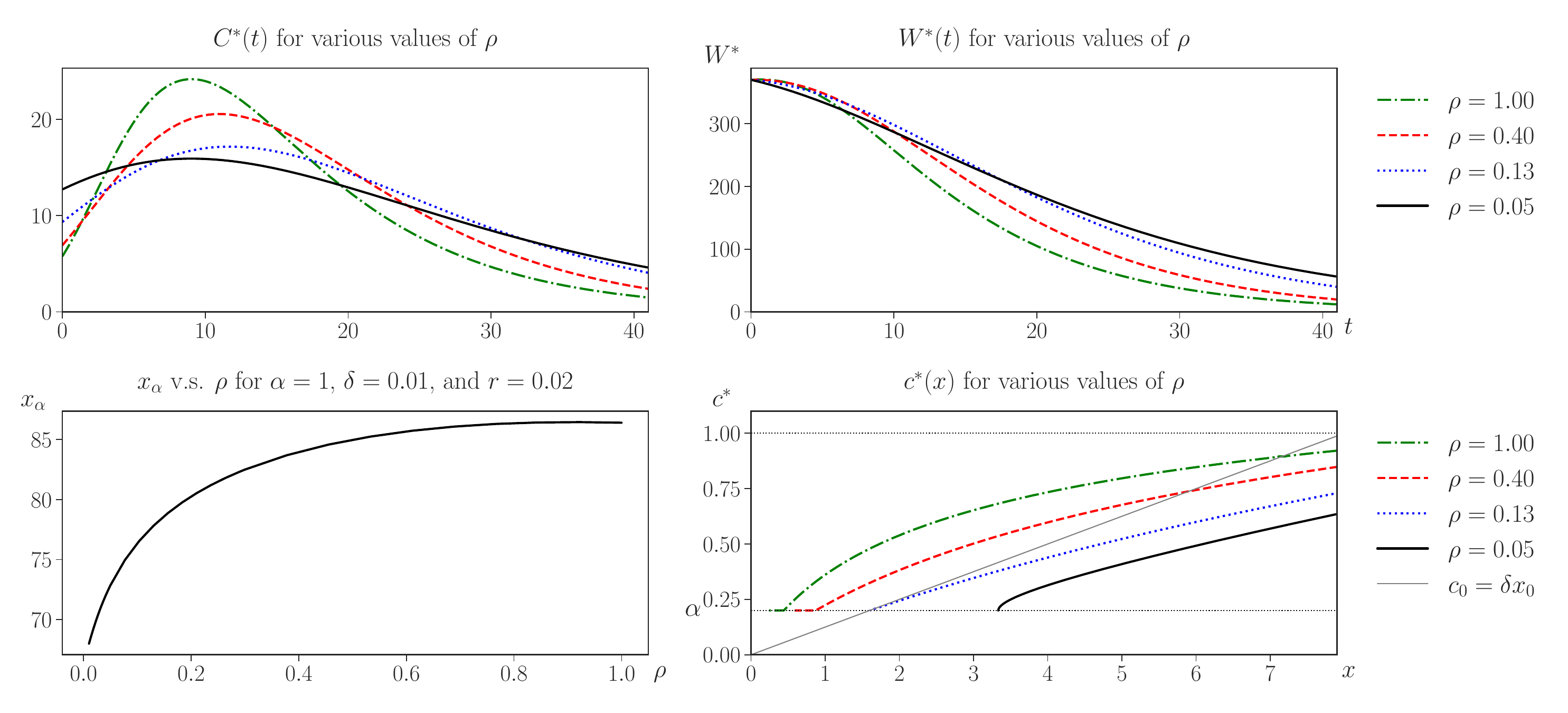}}
	}
	\caption{
		Sensitivity of the optimal consumption and wealth to the habit-formation parameter $\rho$.  \textbf{Top left:} Paths of optimal consumption $C^*(t)$ for various values of $\rho$. Higher $\rho$ amplify the consumption hump. \textbf{Top right:} The corresponding paths of of the optimal wealth $W^*(t)$. Higher $\rho$ leads to higher spending. \textbf{Bottom right:} The consumption-to-habit optimal feedback policy function $\cso(x)$ for various $\rho$. Note that the safe level $\xu=\al/(r+\rho(1-\al))$ is decreasing in $\rho$. Thus, the consumption function shifts to left as $\rho$ increases. \textbf{Bottom left:} Values of the free-boundary $\xs$ as a function of $\rho$ and assuming $\al=1$ and $\del<r$ (for a patient individual). Note that for this case, the safe level $\xu=1/r=50$ is independent of $\rho$. The free-boundary $\xs$ however depends on $\rho$ since $\rho$ is still present in the dynamics \eqref{eq:X-riskless}.
		\label{fig:rho_sense}}
\end{figure}

For a patient individual, the relative wealth and habit have two different regimes. If the initial wealth-to-habit ratio is sufficiently large, namely $x>x_0:=(r+\rho-\del)/(\del\rho)$, then the optimal wealth-to-habit ratio decreases to $x_0$ as $t \to +\infty$, while the optimal consumption-to-habit ratio decreases to $c_0:=(r+\rho-\del)/\rho$ as $t \to +\infty$. Figure \ref{fig:xcs_patient1} shows this scenario. If, on the hand, $x<x_0:=(r+\rho-\del)/(\del\rho)$, then the optimal wealth-to-habit (resp.\ consumption-to-habit) ratio increases to $x_0$ (resp.\ $c_0:=(r+\rho-\del)/\rho$) as $t \to +\infty$. In particular, if $x<\xs$, then the optimal consumption-to-habit ratio is kept at $\al$ while $X^*(t)<\xs$. Figure \ref{fig:xcs_patient2} illustrates this second scenario. The bottom plots of Figures \ref{fig:xcs_patient1} and \ref{fig:xcs_patient2} show the paths of the optimal (absolute) wealth and consumption obtained by Proposition \ref{prop:Admiss}.

The bottom right plots in Figures \ref{fig:xcs_impatient} and \ref{fig:xcs_patient1} show consumption humps for high levels of wealth-to-habit ratios according to Proposition \ref{prop:inverseU}.(i). Figure \ref{fig:conshump_lowWealth} illustrate a numerical example of a consumption hump for low level of wealth-to-habit ratio according to Proposition \ref{prop:inverseU}.(ii). For this case, we have chosen a low level of risk aversion $\gam=0.05$, as Condition (ii) requires it. Other values of the parameters are as follows: $r=0.02$, $\rho=0.18$, $\al=0.2$, $\del=0.125$, $z=2.253$, and $w=2.6 z$. For this case, we find that $\xa=2.5371$ and $x_0=3.3333$. Therefore, $1+\frac{\del}{\gam}\frac{\xa-x_0}{1+\rho \xa} -\al = -0.567 < 0$. Furthermore, we obtain that $\xH'=2.90145$ by numerically solving \eqref{eq:xHp}. Thus, the conditions of Proposition \ref{prop:inverseU}.(ii) holds as long as $w/z\in[\xa=2.5371, \xH'=2.90145)$. Note that we have chosen $w/z=2.6$.

Figure \ref{fig:gam_sens} shows the dependence of the optimal relative consumption policy on the risk aversion parameter $\gam$. The top plot indicates that, for impatient individuals, the optimal relative consumption decreases as $\gam$ increases. In other words, more risk averse impatient individuals optimally consume less. The bottom plot shows a different story for patient individuals. If their wealth-to-habit ratio is above $x_0$, the more risk averse they are, the less they consume. However, for wealth-to-habit ratio below $x_0$, the opposite is true: more risk aversion increases consumption. The bottom plot also shows that $\xa$ is decreasing in $\gam$. Note that, for both cases, we have included the logarithmic utility function (i.e. $\gam=1$), which was discussed in subsection \ref{sub:Log}.

Figure \ref{fig:rho_sense} shows dependence of the optimal consumption and wealth to the habit-formation parameter $\rho$ in \eqref{eq:Z}. As the top plots illustrates, increasing $\rho$ amplifies the consumption hump and increases spending. The bottom left plot illustrates that increasing $\rho$ shifts the optimal relative-consumption feedback function $\cso(x)$ to left. That is, for a given level of the wealth-to-habit ratio $x$, increasing $\rho$ increases the optimal consumption-to-habit $\cso(x)$. These effects are expected since increasing $\rho$ strengthens the individual's habit-formation, and all the aforementioned effects are a consequence of habit-formation. Note that for $\al=1$, the safe level becomes $\xu=1/r$, and is independent of $\rho$. The threshold $\xs$ for patient individuals (which in this case is when $\del<r$), however, still depends on $\rho$, as the bottom right plot of Figure \ref{fig:rho_sense} illustrates. The reason is that, even for $\al=1$, the parameter $\rho$ still effects the dynamics of $(X_t)_{t\ge0}$ in \eqref{eq:X}.

Next, we investigate how the optimal consumption-to-habit function $x\mapsto c^*(x)$ depends on the parameter $\al$, assuming other parameters are fixed. Figure \ref{fig:alpha_sens} illustrates this dependence. First, note that the domain of $\cs$ depends on $\al$. Specifically, by Lemma \ref{lem:NoRuin}, $\cs(x)$ is only defined for values of $x\ge\xu:= \al/(r+\rho(1-\al))$ with $\cs(\xu)=\al$. Thus, the graph of $x\mapsto c^*(x)$ starts at the point $(\xu, \al=(r+\rho)\xu/(1+\rho \xu))$. The function $\xu\mapsto (r+\rho)\xu/(1+\rho \xu)$ is represented by the red dashed-dotted line in Figure \ref{fig:alpha_sens}. Second, note that, depending on the values of $\del$, $\rho$, and $r$, we can identify the following three scenarios:
\begin{enumerate}
	\item[(i)] If $0<\del<r$, then $0<\del<r+\rho(1-\al)$ for all $\al\in(0,1)$. Thus, the individual is patient for all values of $\al$. The top plot of Figure \ref{fig:alpha_sens} illustrates this case.
	
	\item[(ii)] If $\del>r+\rho$, then $\del>r+\rho(1-\al)$ for all $\al\in(0,1)$. Thus, the individual is impatient for all values of $\al$. The bottom plot of Figure \ref{fig:alpha_sens} illustrates this case.
	
	\item[(iii)] If $r<\del<r+\rho$, then $\del<$ (resp.\ $>$) $r+\rho(1-\al)$ for $\al<$ (resp.\ $>$) $1-(\del-r)/\rho$. Thus, the individual is impatient when $\al$ is near 1 and patient when $\al$ is near 0. The middle plot of Figure \ref{fig:alpha_sens} illustrates this case.
\end{enumerate}

Figure \ref{fig:alpha_sens} highlights that the dependence of the optimal consumption-to-habit function to $\al$ is quite different between patient and impatient individuals. For impatient individuals, increasing $\al$ decreases $\cs(x)$. In contrast, $\cs(x)$ is a non-decreasing function of $\al$ for patient individuals. In particular, if $x>\xs$, then $\cs(x)$ does not change with a small change in $\al$. If, on the other hand, $x<\xs$, then $\cs(x)=\al$ is increasing in $\al$.

\begin{figure}[p]
	\centerline{
		\adjustbox{trim={0.0\width} {0.0\height} {0.0\width} {0.0\height},clip}
		{\includegraphics[scale=0.475, page=1]{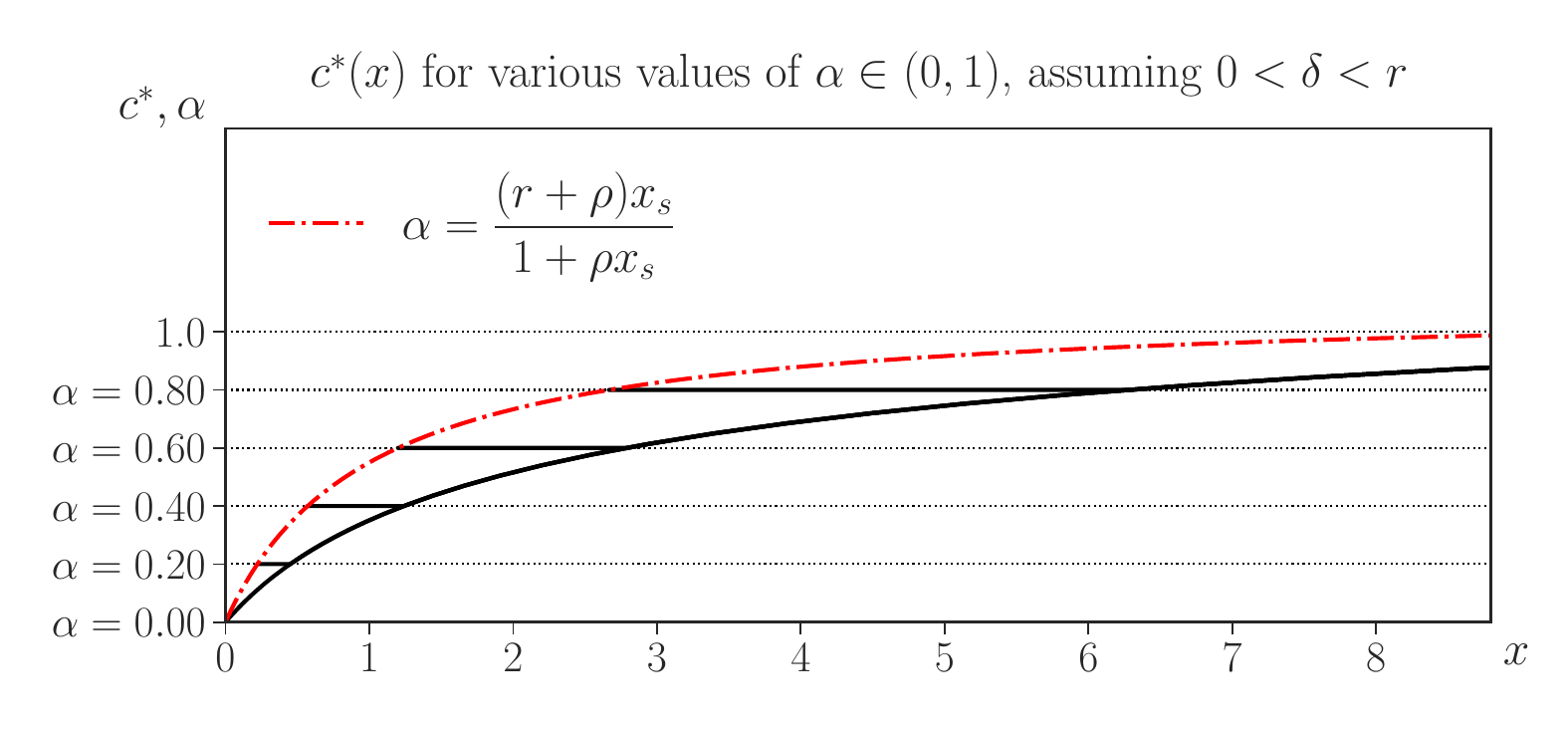}}
	}
	\centerline{
		\adjustbox{trim={0.0\width} {0.0\height} {0.0\width} {0.0\height},clip}
		{\includegraphics[scale=0.475, page=1]{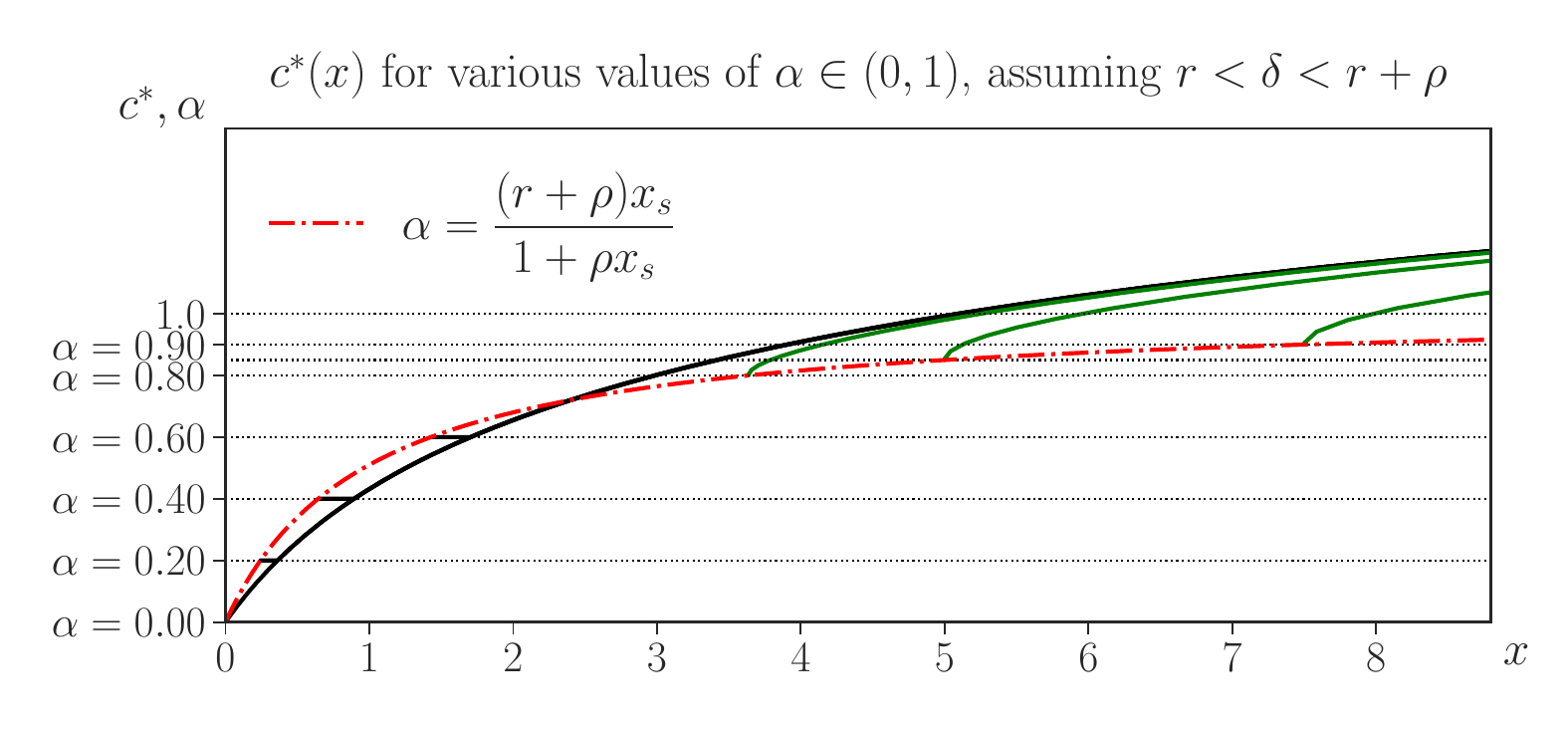}}
	}
	\centerline{
		\adjustbox{trim={0.0\width} {0.0\height} {0.0\width} {0.0\height},clip}
		{\includegraphics[scale=0.475, page=1]{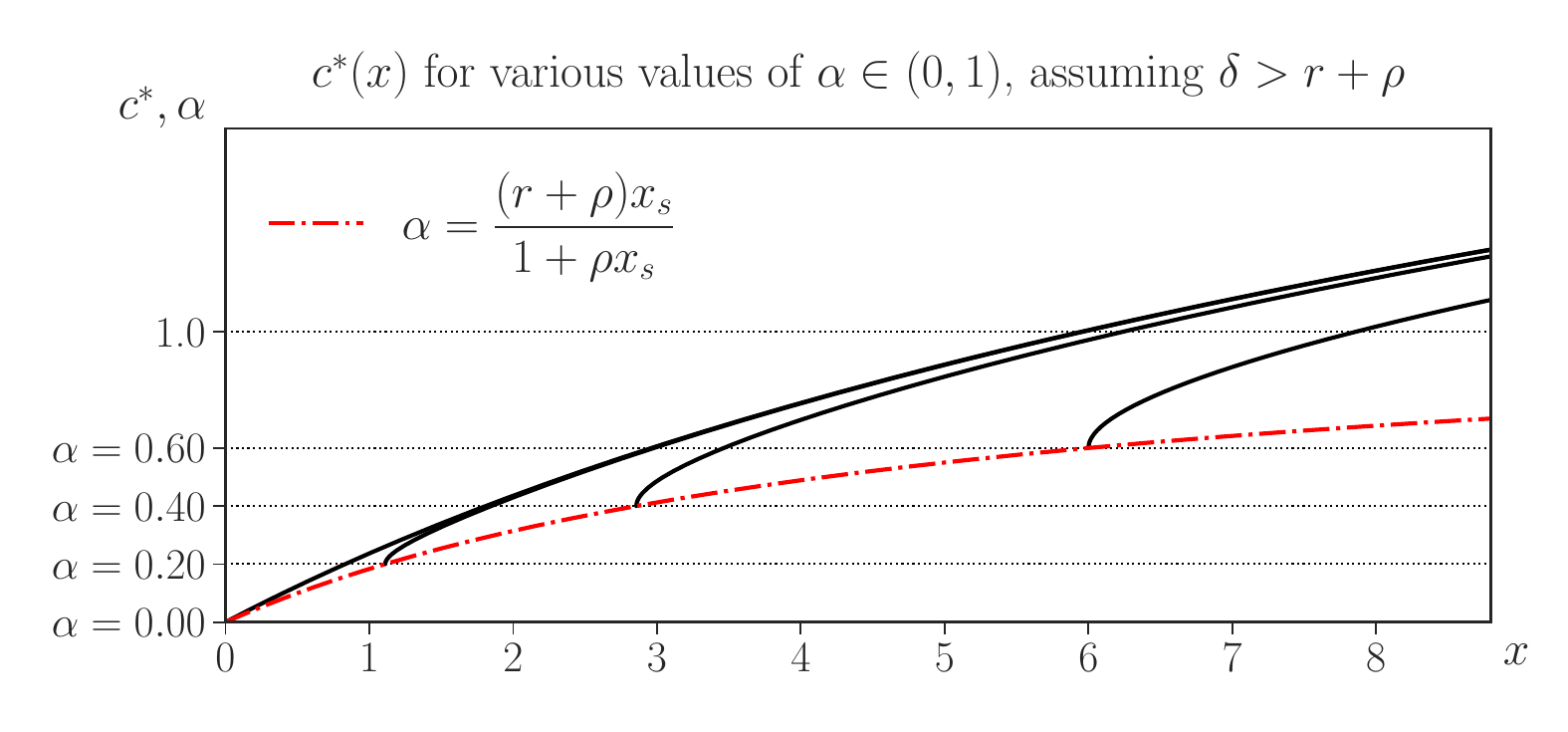}}
	}
	\caption{The optimal consumption-to-habit function $x\mapsto c^*(x)$ for various values of $\al\in(0,1)$ and with other parameters fixed. \textbf{Top:} For $0<\del<r$, the individual is patient for all values of $\al\in(0,1)$. \textbf{Middle:} For $r<\del<r+\rho$, the individual is patient (resp.\ impatient) for $\al$ near 0 (resp.\ 1). \textbf{Bottom:} For $\del>r+\rho$, the individual is impatient for all values of $\al\in(0,1)$. For patient individuals, the optimal wealth-to-habit threshold $\xs$ is increasing in $\al$. Furthermore, for patient individuals with only different $\al$, the optimal consumption-to-habit function $x\mapsto c^*(x)$ coincide for sufficiently large $x$. For impatient individuals, $c^*(x)$ is decreasing in $\al$. 
		\vspace{1em}
		\label{fig:alpha_sens}}
\end{figure}

\begin{figure}[p]
	\centerline{
		\adjustbox{trim={0.01\width} {0.5\height} {0.01\width} {0.0\height},clip}
		{\includegraphics[scale=0.35, page=1]{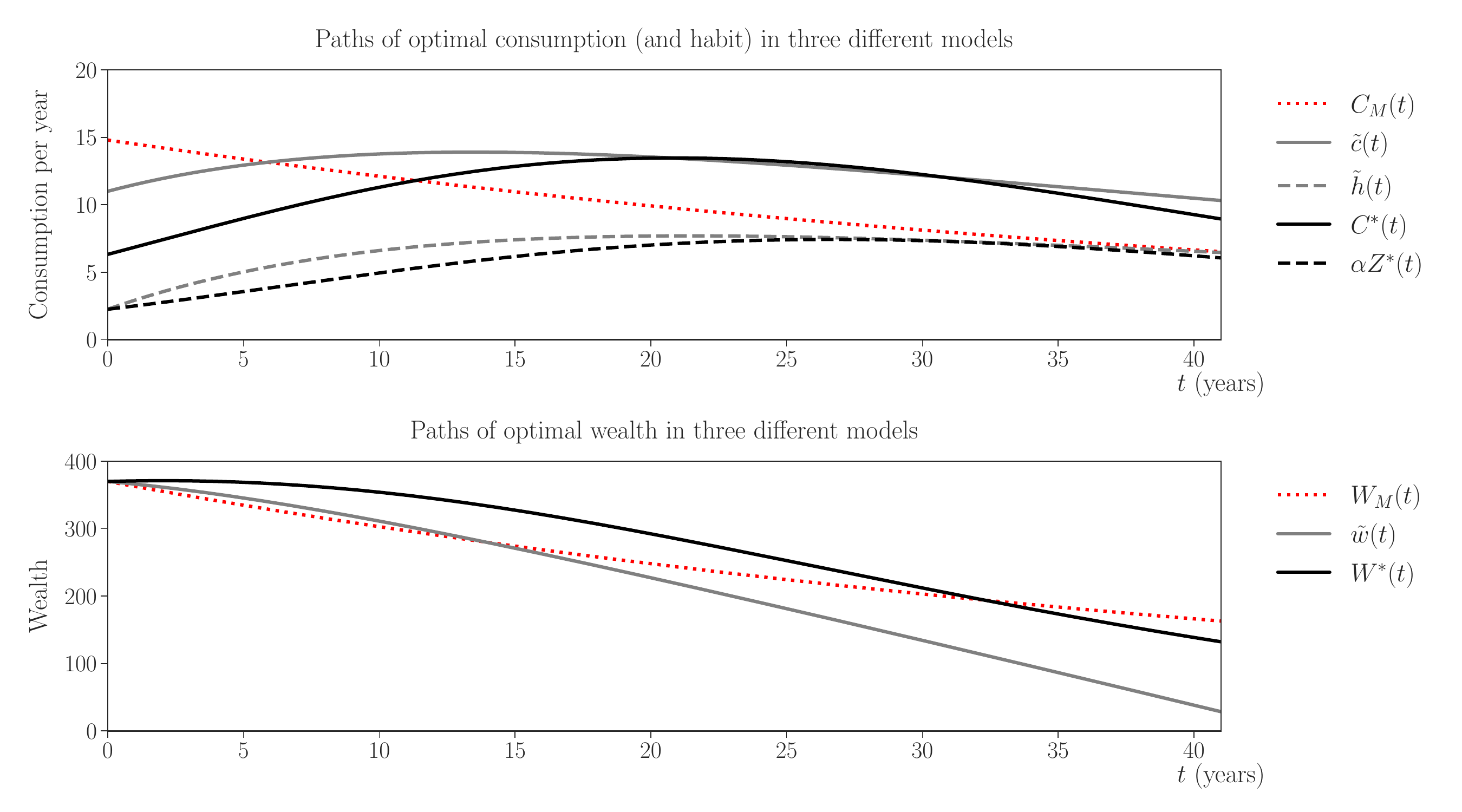}}
	}\vspace{1em}
	\centerline{
		\adjustbox{trim={0.01\width} {0.0\height} {0.01\width} {0.5\height},clip}
		{\includegraphics[scale=0.35, page=1]{xcs_impatient_Kraft.pdf}}
	}
	\caption{
		\textbf{Top:} The red doted line $C_M(t)$ is the optimal consumption in infinite horizon Merton's model with only riskless investments given by \eqref{eq:CW_Merton}). The gray line (respectively, the gray dashed line) is the optimal consumption $\ct(t)$ (respectively, optimal habit $\htt(t)$) in the infinite horizon model of \cite{KraftMunkSeirfriedWagner2017} given by \eqref{eq:ctht_Kraft}. The black line (respectively, the dashed black line) is the optimal consumption $C^*(t)$ (respectively, the optimal habit reference $\al Z^*(t)$). 
		\textbf{Bottom:} The corresponding paths of the optimal wealth in the three models.
		\cite{KraftMunkSeirfriedWagner2017}'s policy spends the most wealth after 40 years, while our policy spends almost the same amount of wealth as Merton's (although not at a constant rate as Merton's).
		\textbf{Parameter values:} $r=0.02$, $\rho=0.174$, $\al=0.575$, $\del=0.1$, $\gam=4$, $w=370$, and $z=3.92$. Except for values of $\al$ and $z$, these values are taken from Table 1 of \cite{KraftMunkSeirfriedWagner2017} for singles. As explained in Remark \ref{rem:Kraft}, the parameter $\al$ (respectively, initial habit $h_0$) in \cite{KraftMunkSeirfriedWagner2017} corresponds to $\al\rho$ (respectively $\al z$) in our model. Thus, the values $\al=0.1$ and $h_0=2.253$ in \cite{KraftMunkSeirfriedWagner2017} translate to $\al=\frac{0.1}{0.174}=0.575$ and $z=\frac{2.253}{0.174}=3.92$ in our model.
		\label{fig:Merton_Kraft}}
\end{figure}

We end this section by comparing the optimal paths of consumption and wealth in our model with two other models. The first model is the classical infinite-horizon consumption problem in \cite{Merton1969} with only risk-free assets. In this model, the value function is
\begin{align}
	V_{M}(w) := \sup_{C(\cdot)\in\widetilde{\Ac}_0} \int_{0}^{+\infty}\ee^{-\del t} \frac{\big(C(t)\big)^{1-\gam}-1}{1-\gam}\, \dd t;\quad w\ge 0, \gam\in(0,1)\cup(1,+\infty),
\end{align}
in which $\widetilde{\Ac}_0$ is the set of consumption policies that avoid bankruptcy (i.e. set $\al=\rho=0$ in Definition 2.1). It can be shown that the value function is
$
	V_M(w) = \left(\frac{\gam}{\del+r(\gam-1)}\right)^\gam \frac{w^{1-\gam}}{1-\gam} - \frac{1}{\del(1-\gam)}, w>0;
$ 
that the optimal consumption feedback function is
$
	c_M(w) = \frac{\del+r(\gam-1)}{\gam}\; w, w>0;
$ 
and that the optimal wealth $W_M(t)$ and consumption $C_M(t)$ are given by
\begin{align}\label{eq:CW_Merton}
	W_M(t) = w\ee^{\frac{r-\del}{\gam}\,t},\quad C_M(t)=\frac{\del+r(\gam-1)}{\gam}\,w\,\ee^{\frac{r-\del}{\gam}\,t},
\end{align}
for $t\ge0$, in which $w>0$ is the initial wealth. For this model, if $\del> r$, then the wealth and consumption are decreasing. This is impatient consumption, as the individual has larger $\del$ and prefers to consume more now than in the future. If $\del= r$, then the wealth and consumption are constant. If $\del< r$, then the wealth and consumption are increasing. This patient consumption. Note that to have a finite value function, we must have $\del>r(1-\gam)$, otherwise, the value function $V_M$ explodes.

The second model is the infinite horizon optimal consumption policy in \cite{KraftMunkSeirfriedWagner2017}, which uses the classical habit formation utility. For $T=+\infty$, equations (20) and (21) therein yield the following dynamics for the optimal consumption path $\ct(t)$ and habit $\htt(t)$
\begin{align}
	\begin{cases}
		\ct'(t)-\htt'(t)=-\kappa\big(\ct(t)-\htt(t)\big),\\
		\ct'(t)+(\beta-\alpha)\ct(t)-(\beta-\kappa)\big(\ct(t)-\htt(t)\big)=0,
	\end{cases}
\end{align}
for $t\ge0$ and with $\ct(0)=c_0$ and $\htt(0)=h_0$ the initial values of consumption and habit. Note that \cite{KraftMunkSeirfriedWagner2017} provide the feedback form of the optimal consumption policy in terms of wealth (see equation (11) therein), which can be used to obtain $c_0$. Solving these ODEs yields explicit formulas for the optimal consumption path $\ct(t)$ and habit $\htt(t)$, namely,
\begin{align}\label{eq:ctht_Kraft}
	\begin{cases}
		\ct(t)=c_0 e^{(\alpha-\beta)t} - \frac{\beta-\kappa}{\kappa+\alpha-\beta}(c_0-h_0)\left(e^{-\kappa t}-e^{(\alpha-\beta)t}\right),\\
		\htt(t)=\ct(t)-(c_0-h_0)e^{-\kappa t},
	\end{cases}
\end{align}
for $t\ge0$. The optimal wealth $\wt(t)$ satisfies $\wt'(t)=r\wt(t)-\ct(t)$, which yields
$
	\tilde{w}(t)= e^{r t}\left(w +\int_0^t e^{-rs} \tilde{c}(s) d s\right).
$

The top plot in Figure \ref{fig:Merton_Kraft} compares the paths of the optimal consumption in our model (the black lines), the \cite{Merton1969} model with only riskless investments (the red dotted line), and the infinite-horizon model in \cite{KraftMunkSeirfriedWagner2017} (the gray lines). We have used parameter values as in Table 1 of \cite{KraftMunkSeirfriedWagner2017} for singles (see the caption in Figure \ref{fig:Merton_Kraft} for details). Merton's policy has the highest initial consumption rate and the lowest rate after 40 years, and it is a monotonically decreasing policy (i.e. it has no hump). Both ours and \cite{KraftMunkSeirfriedWagner2017}'s consumption are hump-shaped, but, \cite{KraftMunkSeirfriedWagner2017}'s consumption rates are generally higher than ours. The dashed lines show consumption habit reference levels in \cite{KraftMunkSeirfriedWagner2017}'s (i.e. $\htt(t)$ in \eqref{eq:ctht_Kraft}) and our model (i.e. $\al Z^*(t)$), which more or less match between the two model. Finally, the bottom plot in Figure \ref{fig:Merton_Kraft} shows the corresponding paths of the optimal wealth in the three models. \cite{KraftMunkSeirfriedWagner2017}'s policy spends almost all wealth after 40 years. Merton's policy spends wealth at a constant rate, with almost half of wealth remaining at the end. Our policy spends almost the same amount of wealth as Merton's, although not at a constant rate (more wealth is spend at the second half of the time period).

\section{Conclusion}\label{sec:5}
We considered an optimal consumption model for an individual who is unwilling to consume below a certain proportion $\alpha\in(0,1]$ of her consumption habit, in which $\al$ controls the degree of addictiveness. 
Assuming a risk-free market, we formulated and solved a deterministic control problem to maximize the discounted CRRA utility of the individual's consumption-to-habit process subject to the habit-formation constraint. We derived the optimal consumption policies explicitly in terms of the solution of a nonlinear free-boundary problem, which we analyzed in detail. Impatient individuals (or, equivalently, those with more addictive habits)  always consume above the minimum rate; thus, they eventually attain the minimum wealth-to-habit ratio.  Patient individuals (or, equivalently, those with less addictive habits) consume at the minimum rate if their wealth-to-habit ratio is below a threshold, and above it otherwise.  By consuming patiently, these individuals maintain a wealth-to-habit ratio that is greater than the minimum acceptable level. Additionally, we proved that the optimal consumption path is hump-shaped if the initial wealth-to-habit ratio is either: (1) larger than a high threshold; or (2) below a low threshold and the agent is more risk seeking (that is, less risk averse).

This paper complements \cite{AngoshtariBayraktarYoung2021} where we considered a similar model with the agent investing in a risky asset as well as the risk-free asset. The results presented herein for is not a special case of our other paper, however. The analysis in the stochastic case relies on randomness of the model and does not extend to the deterministic case considered here. Furthermore, there are structural differences between the consumption policies in the two settings. For instance, in the random setting, the individual consumes patiently (i.e. $\xa>\xu$) for all values of the parameters, while impatient consumption (i.e. $\xa=\xu$) can be optimal in the deterministic setting.

One way to extend our model is to allow bankruptcy by adapting the objective 
$
\int_0^{\tau_0} \frac{1}{1-\gam}\left(\frac{C(t)}{Z(t)}\right)^{1-\gam}\ee^{-\delta\,t}\,\dd t
$ 
in which $\tau_0$ is the time of bankruptcy (i.e. the first time $t$ where $W(t)=0$). Along the same direction, it will be interesting to consider a habit-formation constraint in the optimal dividend problem similar to how \cite{AlbrecherAzcueMuler2022} adapted the drawdown constraint to this problem. Another future research direction is to extend the setting of Subsection \ref{sub:Income} by considering non-constant income schedule. For instance, one may consider a (possibly random) retirement time $\tau_r$ such that the individual receives income $\eta>0$ over $[0,\tau_r]$ and zero thereafter.

%
%

\appendix

%
%

\section{Proof of Lemma \ref{lem:NoRuin}}\label{app:NoRuin}

The following lemma establishes a lower bound for the consumption habit process and is of use in later arguments.

\begin{lemma}\label{lem:Z-Lbound}
	Let $C = \{C(t)\}_{t \ge 0}$ be a consumption process satisfying \eqref{eq:Habit}, in which $Z$ is given by \eqref{eq:Z}. We, then, have
	\begin{align}\label{eq:Z-Lbound}
		Z(t)\ge Z(s) \ee^{-\rho(1-\al)(t-s)},
	\end{align}
	for all $0\le s \le t$. In particular, $Z(t) \ge z \ee^{-\rho(1-\al)t}$ for all $t\ge0$.
\end{lemma}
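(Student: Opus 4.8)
The plan is to reduce the habit constraint to a linear differential inequality for $Z$ and then integrate it. First I would record that, since $z>0$ and $C\ge 0$, the explicit formula \eqref{eq:Z} gives $Z(t)>0$ for every $t\ge 0$, and that $t\mapsto Z(t)$ is locally absolutely continuous, so that the differential form \eqref{eq:Z2} holds for a.e.\ $t\ge 0$. Substituting the habit constraint \eqref{eq:Habit} into \eqref{eq:Z2} then yields
\[
Z'(t) = -\rho\big(Z(t)-C(t)\big) \ \ge\ -\rho(1-\al)\,Z(t)
\]
for a.e.\ $t\ge 0$.

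Next I would multiply through by the positive, locally absolutely continuous integrating factor $\ee^{\rho(1-\al)t}$, obtaining $\frac{\dd}{\dd t}\big(\ee^{\rho(1-\al)t}Z(t)\big)\ge 0$ for a.e.\ $t$. Since the left-hand side is the derivative of a locally absolutely continuous function, the map $t\mapsto \ee^{\rho(1-\al)t}Z(t)$ is nondecreasing on $[0,\infty)$; evaluating at $s\le t$ gives $\ee^{\rho(1-\al)s}Z(s)\le \ee^{\rho(1-\al)t}Z(t)$, which rearranges exactly to \eqref{eq:Z-Lbound}. The final assertion follows by taking $s=0$ and using $Z(0)=z$.

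Alternatively, and to sidestep any differentiability question, one can argue purely with the integral form: setting $g(t):=\ee^{\rho t}Z(t)$, formula \eqref{eq:Z} gives the identity $g(t)=g(s)+\int_s^t\rho\,\ee^{\rho u}C(u)\,\dd u$, and the constraint \eqref{eq:Habit} turns this into $g(t)\ge g(s)+\al\rho\int_s^t g(u)\,\dd u$; a one-sided Gr\"onwall argument (comparing $g$ with the solution of the associated integral equation) then yields $g(t)\ge g(s)\,\ee^{\al\rho(t-s)}$, which is again \eqref{eq:Z-Lbound}. There is no genuine obstacle in this lemma; the only point deserving a moment's care is the measure-theoretic regularity of $Z$ for a merely measurable $C$ — which is precisely why I would keep the second, purely integral, variant in reserve as the safe route.
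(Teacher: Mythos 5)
Your proof is correct, but it proceeds differently from the paper's. The paper does not use a differential inequality or a Gr\"onwall lemma; instead it explicitly constructs the comparison habit process $\Zt$ obtained by following $C$ up to time $s$ and then consuming at the minimal admissible rate $\al\Zt$ thereafter, solves the resulting linear ODE to get $\Zt(t)=Z(s)\ee^{-\rho(1-\al)(t-s)}$, and proves $Z\ge\Zt$ by a monotone Picard-type iteration in integral form (a decreasing sequence $Z^{(k)}$ whose limit solves the same integral equation as $\Zt$, invoking only monotone convergence). Your first route --- observing that \eqref{eq:Z2} and \eqref{eq:Habit} give $Z'(t)\ge-\rho(1-\al)Z(t)$ a.e.\ and integrating with the factor $\ee^{\rho(1-\al)t}$ --- is shorter and perfectly valid once $C$ is locally integrable (which is implicitly assumed throughout the paper so that \eqref{eq:wealth} and \eqref{eq:Z} make sense, so $Z$ is indeed locally absolutely continuous); your second, purely integral route is essentially a repackaging of the paper's iteration as a one-sided Gr\"onwall bound, and your caution about regularity is appropriately handled by it. What the paper's construction buys, beyond being self-contained, is that it identifies the lower bound as the habit path generated by the minimal-consumption policy, a fact that is reused verbatim in the proof of Lemma \ref{lem:NoRuin}; your integrating-factor argument yields the same inequality, and the equality case under $C=\al Z$ is immediate from \eqref{eq:Z2} anyway, so nothing essential is lost.
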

\begin{proof}
	For a fixed $s\ge0$, consider the consumption process $\Ct = \{\Ct(t)\}_{t\ge0}$, that coincides with $C$ over $[0, s)$, followed by consuming at the lowest rate allowed by \eqref{eq:Habit}.  In other words, $\Ct(t) = C(t)$ for $0\le t < s$, and $\Ct(t) = \al \Zt(t)$ for $t \ge s$. Here, $\{\Zt(t)\}_{t \ge 0}$ is the consumption habit process corresponding to $\Ct$, which satisfies
	\begin{align}\label{eq:Zt}
		\begin{cases}
			\dd \Zt(t) = -\rho\big(\Zt(t) - \Ct(t)\big)\dd t;\quad t \ge 0,\\
			\Zt(0)=z.
		\end{cases}
	\end{align}
	By definition, $\Zt(t) = Z(t)$ for $0\le t < s$. For $t \ge s$, on the other hand, \eqref{eq:Zt} yields
	\begin{align}\label{eq:Zt-IVP}
		\begin{cases}
			\frac{\dd \Zt(t)}{\dd t} = -\rho(1-\al) \Zt(t);\quad t \ge s,\\
			\Zt(s)=Z(s).
		\end{cases}
	\end{align}
	The solution to this initial-value problem is $\Zt(t) = Z(s) \ee^{-\rho(1-\al)(t-s)}$.
	
	
	Next, we prove \eqref{eq:Z-Lbound}, that is, $Z(t) \ge \Zt(t)$ for $t\ge s$. For $k\in\{1,2,\dots\}$, define the process $Z^{(k)}$ by the recursive equation
	\begin{align}\label{eq:Ztk}
		\ee^{\rho t}Z^{(k)}(t) := \ee^{\rho s} Z(s) + \int_s^t \al\rho\ee^{\rho u} Z^{(k-1)}(u) \dd u;\quad t\ge s,
	\end{align}
	in which we have defined $Z^{(0)}(t):=Z(t)$ for $t\ge s$. Note, also, that
	\begin{align}\label{eq:ZtZs}
		\ee^{\rho t} Z(t) = \ee^{\rho s} Z(s) + \int_s^t \rho\ee^{\rho u} C(u) \dd u;\quad t\ge s,
	\end{align}
	by \eqref{eq:Z}. From \eqref{eq:Habit}, \eqref{eq:Ztk} (for $k=1$), and \eqref{eq:ZtZs}, we obtain $Z(t)\ge Z^{(1)}(t)$ for $t\ge s$. By using \eqref{eq:Ztk}, we deduce $Z^{(k)}(t)\ge Z^{(k+1)}(t)$, for $k\in\{1,2,\dots\}$. Furthermore, by definition, $Z^{(k)}(t)\ge 0$ for $t\ge s$ and $k\in\{1,2,\dots\}$. It, then, follows from the monotone convergence theorem that there exists a process $Z^{(+\infty)}$ such that, for $t\ge s$, $Z^{(+\infty)}(t) = \lim_{k\to\infty} Z^{(k)}(t)\le Z(t)$, and
	\begin{align}
		\ee^{\rho t}Z^{(+\infty)}(t) = \ee^{\rho s} Z(s) + \int_s^t \al\rho\ee^{\rho u} Z^{(+\infty)}(u) \dd u;\quad t\ge s.
	\end{align}
	The last integral equation is equivalent to \eqref{eq:Zt-IVP}. Therefore, $\Zt(t) = Z^{(+\infty)}(t) \le Z(t)$ for $t\ge s$, which proves \eqref{eq:Z-Lbound}. The last statement of the lemma follows trivially by setting $s=0$ in \eqref{eq:Z-Lbound}.
\end{proof}\vspace{1em}
 
\noindent\textbf{Proof of Lemma \ref{lem:NoRuin}:} Because $z>0$, it follows from Lemma \ref{lem:Z-Lbound} that $Z(t)>0$ for $t \ge 0$. Condition \eqref{eq:NoRuin}, then, implies that $W(t)>0$ for all $t\ge0$ (recall that we assumed $\al \in (0, 1]$). To show the reverse statement, assume that, at time $t\ge0$, the individual's wealth is $W(t)$ and her consumption habit is $Z(t)$. Assume that, thereafter, she consumes at the lowest rate, that is, $C(s)=\al Z(s)$ for $s\ge t$. From the proof of Lemma \ref{lem:Z-Lbound}, it follows that the consumption habit equals the lower bound in \eqref{eq:Z-Lbound}, that is, $Z(s)=Z(t) \ee^{-\rho(1-\al)(s-t)}$, for $s\ge t$. Note that we have exchanged the role of $t$ and $s$. From \eqref{eq:wealth}, we obtain
\begin{align}
	W(s) = W(t) + \int_t^s \left(r W(u) - \al Z(t) \ee^{-\rho(1-\al)(u-t)}\right)\dd u;\quad s\ge t,
\end{align}
which yields
\begin{align}
	W(s) = \ee^{r(s-t)} \left(W(t) - \frac{\al Z(t)}{r+\rho(1-\al)}\right) + \frac{\al Z(t)}{r+\rho(1-\al)} \ee^{-\rho(1-\al)(s-t)};\quad s\ge t.
\end{align}
The first term on the right dominates the second term as $s\to +\infty$. Thus,  if \eqref{eq:NoRuin} holds, the individual can avoid bankruptcy by setting $C(s) = \al Z(s)$ for all $s\ge t$. If \eqref{eq:NoRuin} does not hold, any consumption and investment policy leads to bankruptcy in finite time.

%
%

\section{Proof of Proposition \ref{prop:u-solution-impatient}}\label{app:u-solution-impatient}

The following lemma is used in the proof of Proposition \ref{prop:u-solution-impatient}.

\begin{lemma}\label{lem:u_is_convex}
	Let $\del\ge r+\rho(1-\al)$, and let $y$ be the solution of \eqref{eq:y-ODE-impatient}.  We, then, have
	\begin{align}\label{eq:y_lowerbound_imp}
		y(\psi)> \psi-\frac{\rho}{r+\rho}\psi^{1-\frac{1}{\gam}};\quad 0<\psi<\al^{-\gam}.
	\end{align}
\end{lemma}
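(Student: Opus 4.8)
The plan is a one–dimensional ODE barrier (comparison) argument: I will show that $g(\psi):=\psi-\frac{\rho}{r+\rho}\psi^{1-1/\gam}$ is a \emph{strict lower barrier} for $y$ when \eqref{eq:y-ODE-impatient} is integrated backward from $\psi=\al^{-\gam}$. Write \eqref{eq:y-ODE-impatient} as $y'(\psi)=F\big(\psi,y(\psi)\big)$. The first computation is the algebraic identity
\[
g(\psi)-\frac{\del}{r+\rho}\,\psi\;=\;\frac{\rho\psi}{r+\rho}\left(\frac{r+\rho-\del}{\rho}-\psi^{-1/\gam}\right),
\]
which shows that evaluating $F(\psi,\cdot)$ at $y=g(\psi)$ makes the factor $\frac{\rho}{r+\rho}\big(\frac{r+\rho-\del}{\rho}-\psi^{-1/\gam}\big)$ (present in the numerator) cancel against the denominator, so that $F\big(\psi,g(\psi)\big)=g(\psi)/\psi$. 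This cancellation is legitimate for $0<\psi<\al^{-\gam}$, since $\del\ge r+\rho(1-\al)$ forces $\frac{r+\rho-\del}{\rho}\le\al<\psi^{-1/\gam}$, i.e.\ that factor is strictly negative (and, incidentally, the denominator of $F$ is then nonzero). On the other hand $g'(\psi)-g(\psi)/\psi=\frac{\rho}{\gam(r+\rho)}\psi^{-1/\gam}>0$, and combining the two facts gives
\[
F\big(\psi,g(\psi)\big)\;=\;\frac{g(\psi)}{\psi}\;<\;g'(\psi),\qquad 0<\psi<\al^{-\gam}\,;
\]
at every point of its graph, $g$ has strictly larger slope than the slope prescribed by the ODE.

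Next I would match the endpoints: using $1+\rho\xu=(r+\rho)/\big(r+\rho(1-\al)\big)$ one gets $y(\al^{-\gam})=\al^{-\gam}/(1+\rho\xu)=\al^{-\gam}\,\frac{r+\rho(1-\al)}{r+\rho}=g(\al^{-\gam})$, so $h:=y-g$ vanishes at $\al^{-\gam}$. Suppose, for contradiction, that $h(\psi_0)\le0$ for some $\psi_0\in(0,\al^{-\gam})$, and set $\psi^*:=\sup\{\psi\in(0,\al^{-\gam}):h(\psi)\le0\}$. Once one knows that $h>0$ on a left neighbourhood of $\al^{-\gam}$, it follows that $\psi^*<\al^{-\gam}$, and continuity then gives $h(\psi^*)=0$ with $h>0$ on $(\psi^*,\al^{-\gam})$; hence $h'(\psi^*)\ge0$. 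But $y(\psi^*)=g(\psi^*)$, so $h'(\psi^*)=F\big(\psi^*,g(\psi^*)\big)-g'(\psi^*)=g(\psi^*)/\psi^*-g'(\psi^*)<0$ by the second display — a contradiction. Therefore $h>0$ on $(0,\al^{-\gam})$, which is the assertion. In the case $\del>r+\rho(1-\al)$ the right-hand side of \eqref{eq:y-ODE-impatient} is regular at $\psi=\al^{-\gam}$, and a short computation gives $y'(\al^{-\gam})=\frac{r+\rho(1-\al)}{r+\rho}$, whence $h'(\al^{-\gam})=-\frac{\rho\al}{\gam(r+\rho)}<0$, so the required left-neighbourhood claim holds.

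The only delicate point — and what I expect to be the main obstacle — is the degenerate case $\del=r+\rho(1-\al)$, where $F$ has the indeterminate form $0/0$ at $\psi=\al^{-\gam}$. There I would expand the ODE near $\al^{-\gam}$ (which is in any case needed for ``the solution of \eqref{eq:y-ODE-impatient}'' to be well posed) and show that the one-sided derivative $L:=y'(\al^{-\gam})$ satisfies the quadratic $L^2-\frac{\del}{r+\rho}L-\frac{\rho\al\del}{\gam(r+\rho)^2}=0$; its positive root obeys $L<g'(\al^{-\gam})=\frac{\del}{r+\rho}+\frac{\rho\al}{\gam(r+\rho)}$ (after squaring, the required inequality collapses to $0<4\rho^2\al^2/\gam^2$), so again $h'(\al^{-\gam})<0$ and the barrier argument goes through verbatim. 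Everything apart from this endpoint analysis is elementary algebra.
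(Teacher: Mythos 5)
Your proof is correct and follows essentially the same route as the paper: the paper uses the identical barrier $w(\psi)=\psi-\frac{\rho}{r+\rho}\psi^{1-\frac{1}{\gam}}$, computes its strictly positive defect along the ODE (the paper states the defect as $\frac{1}{\gam}\psi^{-\frac{1}{\gam}}$, which omits the harmless factor $\frac{\rho}{r+\rho}$ that your computation correctly retains), matches the terminal values at $\psi=\al^{-\gam}$, and then simply cites the comparison theorem for first-order ODEs. Your first-crossing contradiction and the endpoint analysis (the slope computation for $\del>r+\rho(1-\al)$ and the L'H\^{o}pital/quadratic expansion in the degenerate case $\del=r+\rho(1-\al)$) merely make that cited comparison step explicit, so the two arguments coincide in substance, with yours being somewhat more careful at the singular endpoint.
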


\begin{proof}
	Define the function $w(\psi) := \psi-\frac{\rho}{r+\rho}\psi^{1-\frac{1}{\gam}}$ for $0<\psi\le\al^{-\gam}$. We want to show that $w(\psi)<y(\psi)$ for $0<\psi<\al^{-\gam}$. Let $\Pc$ be the defect of the ODE in \eqref{eq:y-ODE-impatient}, that is, $\Pc \phi(y)=\phi'(y)-f\big(y,\phi(y)\big)$, in which $f(y,\phi)$ is given by \eqref{eq:RHS-PSI-ODE} and $\phi(y)$ is an arbitrary function such that $\big(y,\phi(y)\big)$ is in the domain of $f$. We have
	\begin{align*}
		\Pc w(\psi) &= w'(\psi) - f\big(\psi, w(\psi)\big)
		= 1 - \frac{\rho}{r+\rho}\left(1-\frac{1}{\gam}\right)\psi^{-\frac{1}{\gam}}
		- \frac{\frac{\rho}{r+\rho}\left(\frac{r+\rho-\del}{\rho} -\psi^{-\frac{1}{\gam}}\right)\left(\psi-\frac{\rho}{r+\rho}\psi^{1-\frac{1}{\gam}}\right)}{\psi-\frac{\rho}{r+\rho}\psi^{1-\frac{1}{\gam}}-\frac{\del}{r+\rho}\psi}\\
		&= 1 - \frac{\rho}{r+\rho}\left(1-\frac{1}{\gam}\right)\psi^{-\frac{1}{\gam}} - 1 + \frac{\rho}{r+\rho}\psi^{-\frac{1}{\gam}} = \frac{1}{\gam} \, \psi^{-\frac{1}{\gam}} >0 = \Pc y(\psi),
	\end{align*}
	for $0<\psi< \al^{-\gam}$.  Furthermore, $w(\al^{-\gam}) = \al^{-\gam}\left(1-\frac{\rho}{r+\rho}\al\right)=\frac{\al^{-\gam}}{1+\rho\xu}=y(\al^{-\gam})$. Inequality \eqref{eq:y_lowerbound_imp}, then, follows from the comparison theorem for first-order ODEs.
\end{proof}
\vspace{1em}

\noindent\textbf{Proof of Proposition \ref{prop:u-solution-impatient}:} $(i)$ Let us first analyze the sign of the right side of the differential equation in \eqref{eq:y-ODE-impatient} for values of $y$ and $\psi$ in the region $0<y<\frac{\al^{-\gam}}{1+\rho\xu}$ and $0<\psi\le\al^{-\gam}$. To this end, define
\begin{align}\label{eq:RHS-PSI-ODE}
	f(\psi,y) := \frac{\frac{\rho}{r+\rho}\left(\frac{r+\rho-\del}{\rho} -\psi^{-\frac{1}{\gam}}\right)y}{y-\frac{\del}{r+\rho}\psi};\quad 0<\psi\le\al^{-\gam}, \, 0<y<\frac{\al^{-\gam}}{1+\rho\xu}.
\end{align}
Since $\del\ge r+\rho(1-\al)$ and $0<\psi\le\al^{-\gam}$, we have
\begin{align}\label{eq:yODE-denom}
	\frac{r+\rho-\del}{\rho}\le \al \le \psi^{-\frac{1}{\gam}}.
\end{align}
Thus, the numerator of $f$ is non-positive, and it follows that $f$ is non-negative in its domain if and only if $y\le \frac{\del}{r+\rho}\psi$. Thus, we look for a solution of \eqref{eq:y-ODE-impatient} in the domain
\begin{align}\label{eq:DC_eps}
	\Dc_\eps = \left\{(\psi,y): 0<\psi<\al^{-\gam}+\eps, \, 0<y<\frac{\del\psi}{r+\rho} \right\},
\end{align}
for $\eps > 0$.   The shaded regions in Figure \ref{fig:y_imp} represent the limiting domain $\Dc_0$. Consider the case $\del>r+\rho(1-\al)$ (the right plot in Figure \ref{fig:y_imp}). In this case, for a sufficiently small $\eps$, $f$ is positive and locally Lipschitz (with respect to y) in $\Dc_\eps$. Since the terminal value $\big(\al^{-\gam}, \frac{\al^{-\gam}}{1+\rho\xu} \big)$ is in $\Dc_\eps$, it follows that \eqref{eq:y-ODE-impatient} has a unique solution that extends to the left of $\Dc_\eps$. However, by the comparison theorem for first-order ODEs, we must have $0<y(\psi)<\del\psi/(r+\rho)$ for $0<\psi<\al^{-\gam}$. Thus, \eqref{eq:y-ODE-impatient} has a unique increasing solution satisfying $0<y(\psi)<\del\psi/(r+\rho)$ for $0<\psi<\al^{-\gam}$. Finally, we obtain the result for the case $\del=r+\rho(1-\al)$ (see the left plot of Figure \ref{fig:y_imp}) by letting $\del\to \big(r+\rho(1-\al)\big)^+$ and by using continuous dependence of the solution of \eqref{eq:y-ODE-impatient} with respect to $\del$ for the case $\del>r+\rho(1-\al)$.\vspace{1ex}

%
%

\begin{figure}[t]
	\centerline{
		\adjustbox{trim={0.0\width} {0.0\height} {0.0\width} {0.0\height},clip}
		{\includegraphics[scale=0.35, page=1]{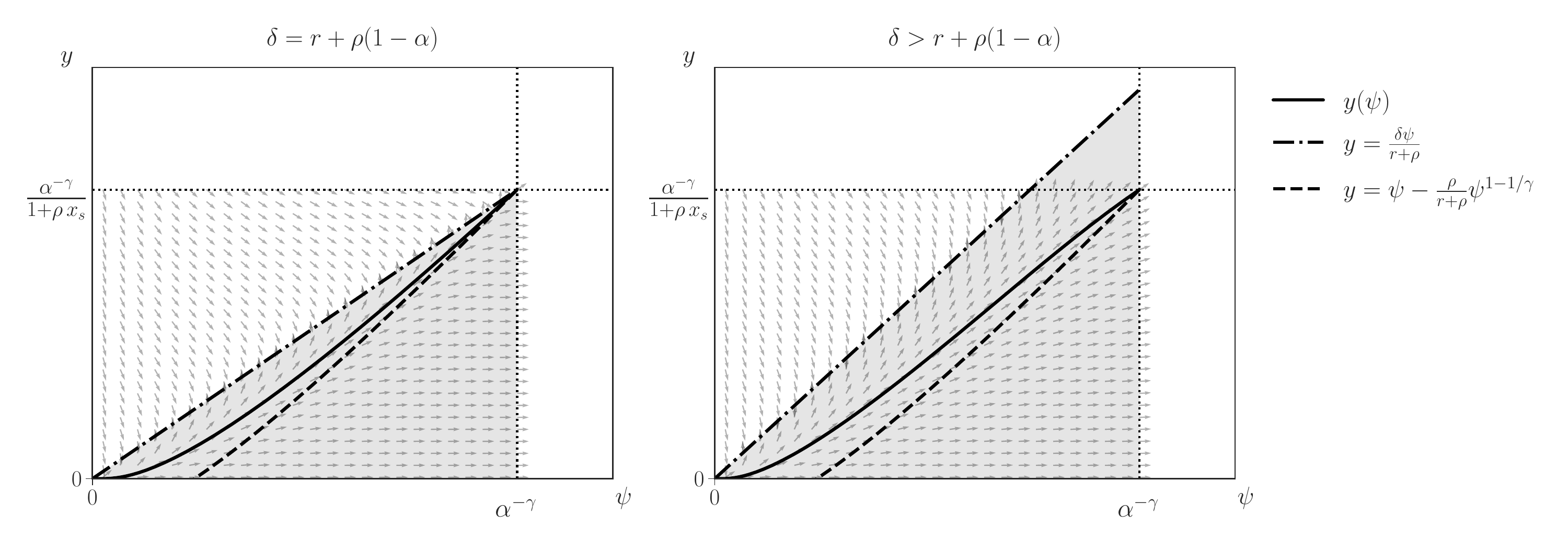}}
	}
	\caption{The direction fields and the solutions $y = y(\psi)$ of \eqref{eq:y-ODE-impatient} for the cases $\del>r+\rho(1-\al)$ (on the right) and $\del=r+\rho(1-\al)$ (on the left). The shaded areas are the domain $\Dc_0$ defined in \eqref{eq:DC_eps} where the right side of the differential equation in \eqref{eq:y-ODE-impatient} is positive. Note that any integral curve of the differential equation in the shaded regions approaches $(0,0)$.
		\vspace{1em}
		\label{fig:y_imp}}
\end{figure}

\noindent$(ii)$ Set $\yo=\ys = \frac{\al^{-\gam}}{1+\rho\xu}$. The FBP \eqref{eq:FBPDual2-riskless}--\eqref{eq:FBPDual4-riskless}, then, reduces to the terminal-value problem
\begin{align}\label{eq:u-TVP}
	\begin{cases}\displaystyle
		\big(r+\rho -\del\big)y u'(y) + \del u(y) = \frac{\gam}{1-\gam}\big(y-\rho y u'(y)\big)^{1-\frac{1}{\gam}};
		\qquad 0< y \le \ys,\vspace{1ex} \\
		u(\ys)=\dfrac{1}{\del}\left(\dfrac{\gam}{1-\gam}\al^{1-\gam}+(r+\rho-\del)\ys\xu\right) ~~ \Big\{\Longleftrightarrow~~u'(\ys)=-\xu\Big\}.
	\end{cases}
\end{align}
Let $\psi$ equal the inverse of $y$ obtained in (i); then, $\psi$ solves the terminal-value problem
\begin{align}\label{eq:psi-TVP}
	\begin{cases}\displaystyle
		\psi'(y) = \frac{y-\frac{\del}{r+\rho}\psi(y)}{\frac{\rho}{r+\rho}\left(\frac{r+\rho-\del}{\rho} -\big(\psi(y)\big)^{-\frac{1}{\gam}}\right)y};\quad 0<y\le\ys,\\
		\psi\left(\ys\right) = \al^{-\gam},
	\end{cases}
\end{align}
and it is straightforward to show that $u$ defined by
\begin{align}\label{eq:u-impatient2}
	u(y) = \frac{1}{\del}\left(\frac{\gam}{1-\gam}\al^{1-\gam}+(r+\rho-\del)\ys\xu\right)-\int_y^{\ys} \left(\frac{1}{\rho}-\frac{\psi(y')}{\rho y'}\right)\dd y'.
\end{align}
solves \eqref{eq:u-TVP}.

Next, we show that $u$ in \eqref{eq:u-impatient2} can be represented as in \eqref{eq:u-impatient}. To that end, define the operator
\begin{align}\label{eq:F}
	F(y) = \del u(y) - \frac{\gam}{1-\gam}\big(\psi(y)\big)^{1-\frac{1}{\gam}}-\frac{r+\rho-\del}{\rho}\big(\psi(y)-y\big);\quad 0<y\le\ys,
\end{align}
in which $u$ is given in \eqref{eq:u-impatient2}.  We want to show that $F(y)=0$ for $0<y\le \ys$. From \eqref{eq:u-impatient2} and the boundary condition in \eqref{eq:psi-TVP}, we have $F(\ys)=0$. Thus, it suffices to show that $F'(y)=0$ for $0<y<\ys$, which we demonstrate as follows:
\begin{align}\label{eq:Fcalcs}
	\begin{split}
		F'(y)&=\del u'(y) + \big(\psi(y)\big)^{-\frac{1}{\gam}}\psi'(y) -\frac{r+\rho-\del}{\rho}\big(\psi'(y)-1\big)\\
		&=\del\left(\frac{1}{\rho} - \frac{\psi(y)}{\rho y}\right) + \left(\big(\psi(y)\big)^{-\frac{1}{\gam}}-\frac{r+\rho-\del}{\rho}\right)\psi'(y)
		+\frac{r+\rho-\del}{\rho}\\
		&=\del\left(\frac{1}{\rho} - \frac{\psi(y)}{\rho y}\right)
		- \frac{y-\frac{\del}{r+\rho}\psi(y)}{\frac{\rho}{r+\rho}y}
		+\frac{r+\rho-\del}{\rho}=0,
	\end{split}
\end{align}
in which we used \eqref{eq:u-impatient2} to get the second equation and \eqref{eq:psi-TVP} to get the third equation.

We, now, complete the proof of Proposition \ref{prop:u-solution-impatient} by showing that $u = u(y)$ is decreasing and strictly convex with respect to $y \in (0, \ys]$. By \eqref{eq:u-impatient2}, we have $u'(y) = \frac{1}{\rho} - \frac{\psi(y)}{\rho y}$. By using \eqref{eq:psi-TVP}, we obtain that, for $0<y<\ys$,
\begin{align}
	u''(y) &= \frac{1}{\rho y}\left(\frac{\psi(y)}{y}-\psi'(y)\right)
	=\frac{1}{\rho y}\left(\frac{\psi(y)}{y}-\frac{y-\frac{\del}{r+\rho}\psi(y)}{\frac{\rho}{r+\rho}\left(\frac{r+\rho-\del}{\rho} -\big(\psi(y)\big)^{-\frac{1}{\gam}}\right)y}\right)\\
	&=\frac{(r+\rho)\left(\psi(y)-\frac{\rho}{r+\rho}\big(\psi(y)\big)^{1-\frac{1}{\gam}}-y\right)}{\rho^2 y^2\left(\frac{r+\rho-\del}{\rho} -\big(\psi(y)\big)^{-\frac{1}{\gam}}\right)}.
\end{align}
Note that, by \eqref{eq:yODE-denom}, we have $\frac{r+\rho-\del}{\rho} -\big(\psi(y)\big)^{-\frac{1}{\gam}}< 0$. Furthermore, it follows from Lemma \ref{lem:u_is_convex} that $\psi(y)-\frac{\rho}{r+\rho}\big(\psi(y)\big)^{1-\frac{1}{\gam}}-y < 0$. See Figure \ref{fig:y_imp} for an illustration. Thus, $u''(y)> 0$ for $0<y<\ys$. Finally, $u'(y)<0$ for $0<y<\ys$, since $u(y)$ is convex and that $u'(\ys)=-\xu<0$ by \eqref{eq:u-TVP}.

%
%

\section{Proof of Proposition \ref{prop:u-solution-patient}}\label{app:u-solution-patient}

The following lemma is used in the proof of Proposition \ref{prop:u-solution-patient}. Its proof is similar to the proof of Lemma \ref{lem:u_is_convex} and is, thus, omitted.

\begin{lemma}\label{lem:y_bounds}
	Let $\del < r+\rho(1-\al)$, and let $y$ be as defined in \eqref{eq:y_patient}.  For $0<\psi<\psi_0$, we have
	\begin{align}\label{eq:yminus_bounds}
		\max\left(0, \psi-\frac{\rho}{r+\rho}\psi^{1-\frac{1}{\gam}}\right) < y(\psi) < \frac{\del}{r+\rho}\psi,
		\intertext{and, for $\psi > \psi_0$, we have}\label{eq:yplus_bounds}
		\frac{\del}{r+\rho}\psi < y(\psi) < \psi-\frac{\rho}{r+\rho}\psi^{1-\frac{1}{\gam}}.
	\end{align}
\end{lemma}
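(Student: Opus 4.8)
The plan is to mirror the proof of Lemma~\ref{lem:u_is_convex}: exhibit explicit barrier functions and invoke the comparison theorem for first-order ODEs, working separately on the intervals $(0,\psi_0)$ and $(\psi_0,\al^{-\gam})$, on which the right-hand side of the ODE~\eqref{eq:y-FBP} behaves differently. Write $f(\psi,y)$ for that right-hand side (the same $f$ as in~\eqref{eq:RHS-PSI-ODE}) and set
\begin{align*}
	w_0(\psi):=\frac{\del}{r+\rho}\,\psi,\qquad w_-(\psi):=\psi-\frac{\rho}{r+\rho}\,\psi^{1-\frac1\gam}.
\end{align*}
Three structural facts organize everything. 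First, $\psi_0$ is exactly the point at which the factor $\tfrac{r+\rho-\del}{\rho}-\psi^{-1/\gam}$ in the numerator of $f$ changes sign, being negative on $(0,\psi_0)$ and positive on $(\psi_0,\al^{-\gam})$. Second, $w_0$ is exactly the locus where the denominator $y-\tfrac{\del}{r+\rho}\psi$ of $f$ vanishes, hence a singular curve of~\eqref{eq:y-FBP} that no integral curve can cross. Third, $w_-(\psi_0)=w_0(\psi_0)=y_0$, where $y_0:=\tfrac{\del\psi_0}{r+\rho}$, because $w_-(\psi_0)-w_0(\psi_0)$ is the denominator of $f$ along $w_-$ at $\psi_0$, namely $\tfrac{\rho}{r+\rho}\psi_0\big(\tfrac{r+\rho-\del}{\rho}-\psi_0^{-1/\gam}\big)=0$; moreover a one-line computation gives $w_0'(\psi_0)<w_-'(\psi_0)$, consistent with $w_-<w_0$ on $(0,\psi_0)$ and $w_->w_0$ on $(\psi_0,\al^{-\gam})$.

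First I would dispose of the $w_0$-bounds, which follow from the strict monotonicity of $y$ asserted in Proposition~\ref{prop:u-solution-patient}$(i)$ together with the sign of $f$. On $(0,\psi_0)$ the numerator of $f$ is negative, so $y>w_0$ would force $y'<0$, contradicting monotonicity; hence $y<w_0$ there. On $(\psi_0,\al^{-\gam})$ the numerator of $f$ is positive, so $y<w_0$ would again force $y'<0$; hence $y>w_0$ there — this dichotomy is precisely the subtlety flagged in the footnote to Proposition~\ref{prop:u-solution-patient}$(i)$. By continuity it then follows that $y(\psi_0)=y_0$, so $y$ threads the singular point $(\psi_0,y_0)$. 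Together with $y>0$, the bound $y<w_0$ (resp.\ $y>w_0$) is half of what is claimed, and $y>0$ supplies the easy half of $y>\max(0,w_-)$ once the $w_-$-bound is in hand.

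For the remaining $w_-$-bounds I would compute the defect $\Pc w_-(\psi):=w_-'(\psi)-f\big(\psi,w_-(\psi)\big)$. The algebra is identical to that in the proof of Lemma~\ref{lem:u_is_convex} and gives $\Pc w_-(\psi)>0=\Pc y(\psi)$ for $\psi\ne\psi_0$, so $w_-$ is a strict supersolution of~\eqref{eq:y-FBP} on each of $(0,\psi_0)$ and $(\psi_0,\al^{-\gam})$, where $f$ is continuous (it is singular only along $w_0$, and on $(0,\psi_0)$ both $w_-$ and $y$ lie below $w_0$, while on $(\psi_0,\al^{-\gam})$ both lie above it). Anchoring at $\psi_0$, where the barrier and the solution agree by the third structural fact, and running the comparison leftward on $(0,\psi_0)$ and rightward on $(\psi_0,\al^{-\gam})$, the strict-supersolution comparison yields $y>w_-$ on $(0,\psi_0)$ and $y<w_-$ on $(\psi_0,\al^{-\gam})$; together with the $w_0$-bounds this is exactly the statement of the lemma. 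I expect the genuinely delicate point to be the confluence point $(\psi_0,y_0)$: there $f$ is not Lipschitz — $\partial_y f$ blows up along the curves approaching $(\psi_0,y_0)$ — so one cannot simply quote the comparison theorem across $\psi_0$. To seed the two one-sided comparisons one must control the slope $y'(\psi_0)$ of the selected solution, showing it lies strictly between $w_0'(\psi_0)$ and $w_-'(\psi_0)$; this comes out of the local analysis of the planar vector field at the singular point carried out in the proof of Proposition~\ref{prop:u-solution-patient}$(i)$, or, alternatively, one shifts $w_-$ by $\pm\eps$, compares away from $\psi_0$, and lets $\eps\to0^+$.
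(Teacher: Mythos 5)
Your proposal is correct and is essentially the proof the paper has in mind (it omits it as ``similar to the proof of Lemma~\ref{lem:u_is_convex}''): the same barrier $\psi-\frac{\rho}{r+\rho}\psi^{1-\frac{1}{\gam}}$, the same defect computation giving a strict supersolution, a comparison anchored at the confluence point $(\psi_0,y_0)$ on each side, and the sign-of-$f$ argument for the $\frac{\del}{r+\rho}\psi$ bounds. One caution: the monotonicity of $y$ and any control of $y'(\psi_0)$ must be drawn from the construction \eqref{eq:y_patient} itself (where $y_\pm$ are built as increasing limits of the $\eps$-shifted problems), not from the conclusions or the differentiability analysis in Proposition~\ref{prop:u-solution-patient}$(i)$, since those invoke this lemma; your $\eps$-shift-and-pass-to-the-limit variant is the non-circular way to handle the singular point $\psi_0$.
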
\vspace{1ex}

\noindent\textbf{Proof of Proposition \ref{prop:u-solution-patient}:} $(i)$	We, first, analyze the sign of the right side of the differential equation in \eqref{eq:y-FBP}, that is, the sign of the function $f(\psi, y)$ of \eqref{eq:RHS-PSI-ODE} for $y>0$ and $0<\psi\le\al^{-\gam}$. Because $0<\del<r+\rho(1-\al)$, we have $\frac{r+\rho-\del}{\rho}-\psi^{-\frac{1}{\gam}}<0$ for $0<\psi<\psi_0$ and $\frac{r+\rho-\del}{\rho}-\psi^{-\frac{1}{\gam}}>0$ for $\psi_0<\psi<\al^{-\gam}$. It follows that
\begin{align}
	f(\psi, y) >0 \quad \Longleftrightarrow\quad (\psi, y) \in \Dc_- \cup \Dc_+,
\end{align}
in which we have defined
\begin{align}\label{eq:D_PM}
	\Dc_- = \left\{(\psi,y): 0<\psi<\psi_0,\, 0<y<\frac{\del\psi}{r+\rho} \right\},\text{ and }
	\Dc_+ = \left\{(\psi,y): \psi_0<\psi<\al^{-\gam}, \, y>\frac{\del\psi}{r+\rho} \right\}.
\end{align}
The shaded region in Figure \ref{fig:y_pat} represents the domain $\Dc_- \cup \Dc_+$.  It follows that any increasing solution of the differential equation in \eqref{eq:y-FBP} over the interval $[0,\al^{-\gam}]$ must pass through the point $(\psi_0, y_0)$. This point, however, is a singularity of the differential equation. Indeed, there are two integral curves passing through $(\psi_0, y_0)$, with one an increasing function of $\psi$ and the other a decreasing function. Here, we are interested in the increasing curve, and we construct it in the following paragraph.

Since $f$ is locally Lipschitz in $\Dc_-$, for $\eps$ in a right neighborhood of $0$, the terminal-value problem
\begin{align}\label{eq:y-TVP}
	\begin{cases}\displaystyle
		y'(\psi) = f\big(\psi, y(\psi)\big);\quad 0<\psi\le\psi_0,\\
		y(\psi_0) = y_0-\eps,
	\end{cases}
\end{align}
has a unique increasing solution that continuously depends on $\eps$. By taking the limit $\eps\to 0^+$, we obtain an increasing left solution $y_-(\psi)$ for $0<\psi<\psi_0$, such that $\lim_{\psi\to\psi_0^-}y_-(\psi)=y_0$. By applying a similar procedure to 
\begin{align}\label{eq:y-IVP}
	\begin{cases}\displaystyle
		y'(\psi) = f\big(\psi, y(\psi)\big);\quad \psi\ge\psi_0,\\
		y(\psi_0) = y_0+\eps,
	\end{cases}
\end{align}
we obtain an increasing right solution solution $y_+(\psi)$ for $\psi > \psi_0$, such that $\lim_{\psi\to\psi_0^+}y_+(\psi)=y_0$. Then, to get a solution over the whole domain, we define $y$ by
\begin{align}\label{eq:y_patient}
	y(\psi) =
	\begin{cases}
		y_-(\psi); &\quad 0<\psi<\psi_0,\\
		y_0; &\quad \psi=\psi_0,\\
		y_+(\psi); &\quad y>\psi_0.
	\end{cases}
\end{align}

%
%

\begin{figure}[t]
	\centerline{
		\adjustbox{trim={0.0\width} {0.0\height} {0.0\width} {0.0\height},clip}
		{\includegraphics[scale=0.35, page=1]{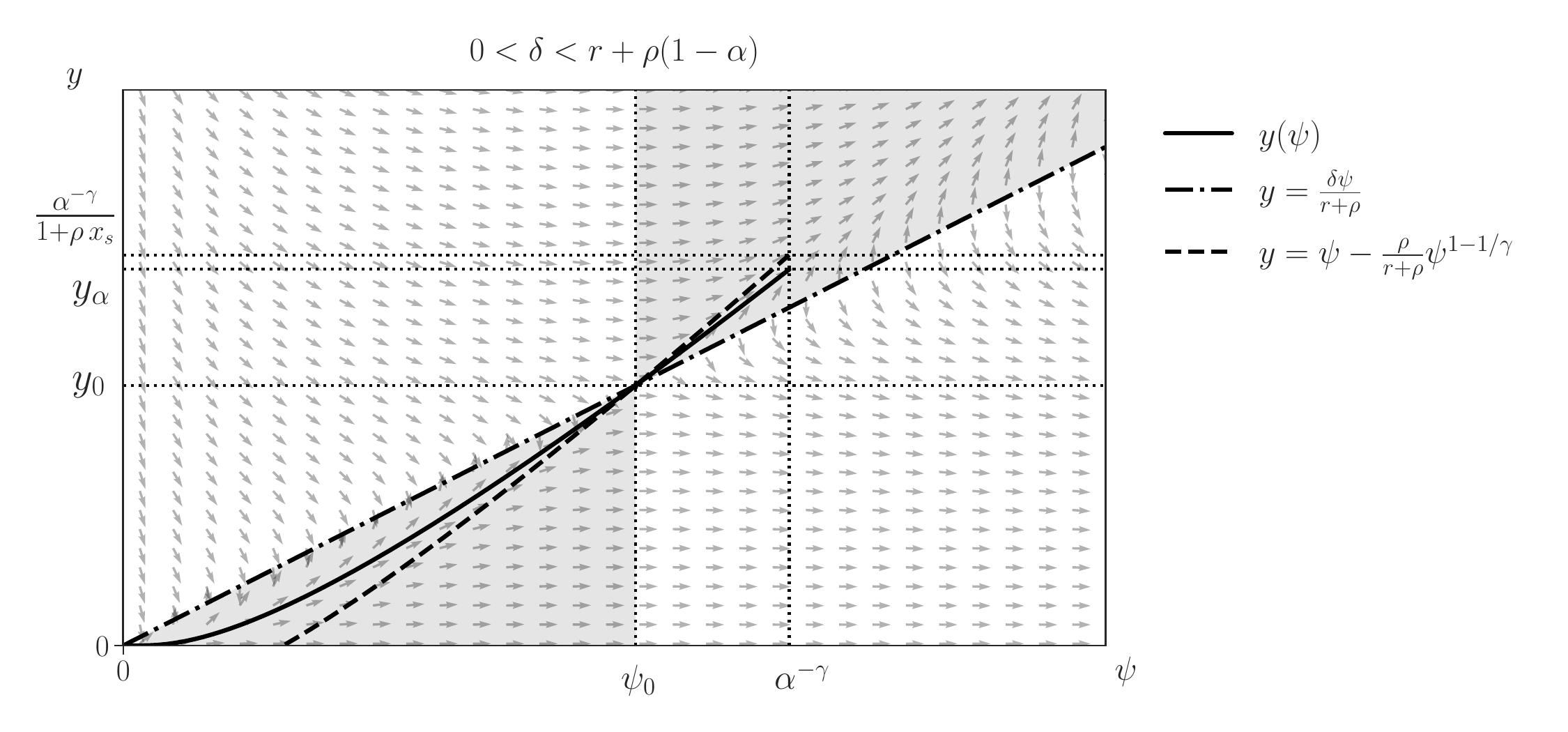}}
	}
	\caption{The direction field and the solution $y = y(\psi)$ of \eqref{eq:y-FBP}. The shaded area is the domain $\Dc_-\cup\Dc_+$ defined in \eqref{eq:D_PM} where the right side of the differential equation in \eqref{eq:y-FBP} is positive. Note that any increasing integral curve over $[0,\al^{-\gam}]$ has to pass through the point $(\psi_0, y_0)$.
		\vspace{1em}
		\label{fig:y_pat}}
\end{figure}

To show that \eqref{eq:y_patient} defines an increasing solution of the differential equation in \eqref{eq:y-FBP} over the interval $(0,\al^{-\gam}]$, it only remains to show: (a) $y_+(\psi)$ is defined for all $\psi \in [\psi_0, \al^{-\gam}]$, and (b) $y$ is differentiable at $\psi_0$.


Statement (a) directly follows from \eqref{eq:yplus_bounds}. To prove statement (b), first note that, by Lemma \ref{lem:y_bounds}, the left- and the right-derivatives $y_-'(\psi_0)$ and $y_+'(\psi_0)$ exist.  Because $y_\pm(\cdot)$ satisfy $y'_\pm(\psi)=f\big(\psi,y_\pm(\psi)\big)$, we have by L'H\^{o}pital's rule
\begin{align}
	\frac{(r+\rho)y_\pm'(\psi_0)}{\rho y_0} &= \lim_{\psi\to\psi^\pm}\frac{(r+\rho)y_\pm'(\psi)}{\rho y(\psi)}
	= \lim_{\psi\to\psi^\pm}\frac{\frac{r+\rho-\del}{\rho} - \psi^{-\frac{1}{\gam}}}{y_\pm(\psi) - \frac{\del}{r+\rho}\psi}
	=\lim_{\psi\to\psi^\pm}\frac{\frac{1}{\gam} \psi^{-1-\frac{1}{\gam}}}{y_\pm'(\psi) - \frac{\del}{r+\rho}} = \frac{\frac{1}{\gam} \psi_0^{-1-\frac{1}{\gam}}}{y_\pm'(\psi_0) - \frac{\del}{r+\rho}},
\end{align}
which, in turn, yields that $y_\pm'(\psi_0)$ satisfy the quadratic equation:
\begin{align}
	y_\pm^{\prime 2} - \frac{\del}{r+\rho} y_\pm' - \frac{\rho y_0}{(r+\rho)\gam}\psi_0^{-1-\frac{1}{\gam}} = 0.
\end{align}
This quadratic equation only has one positive solution. Therefore, we must have $y'_-(\psi_0)=y'_+(\psi_0)$, and $y$ defined by \eqref{eq:y_patient} is differentiable at $\psi_0$.

Finally, to obtain an increasing solution of the FBP \eqref{eq:y-FBP}, we set $\ys=y(\al^{-\gam})$, in which $y$ is given in \eqref{eq:y_patient}. The bounds on $\ys$ and $y$ directly follow from Lemma \ref{lem:y_bounds}.\vspace{1em}

\noindent$(ii)$ For $\yo=+\infty$, FBP \eqref{eq:FBPDual1-riskless}--\eqref{eq:FBPDual4-riskless} becomes,
\begin{align}\label{eq:u-FPB2}
	\begin{cases}
		\displaystyle
		\big(r+\rho(1-\al)-\del\big)y u'(y) + \del u(y) = \frac{\al^{1-\gam}}{1-\gam} -\al y;
		\qquad y>\ys,\vspace{1ex}\\
		\displaystyle
		\big(r+\rho -\del\big)y u'(y) + \del u(y) = \frac{\gam}{1-\gam}\Big(y-\rho y u'(y)\Big)^{1-\frac{1}{\gam}};
		\qquad 0< y \le \ys,\vspace{1ex}\\
		\displaystyle
		\lim_{y\to+\infty} u'(y) = -\xu,\vspace{1ex}\\
		\ys-\rho \ys u'(\ys) = \al^{-\gam},
	\end{cases}
\end{align}
in which $\ys>0$ is unknown. The general solution of the first differential equation in \eqref{eq:u-FPB2} is
\begin{align}\label{eq:linearODE}
	u(y) = C y^{-\frac{\del}{r+\rho(1-\al)-\del}} - \xu y + \frac{\al^{1-\gam}}{\del(1-\gam)};\quad y>\ys,
\end{align}
in which $C$ is an arbitrary constant to be determined. Because $0<\del<r+\rho(1-\al)$, we have $u'(y)+\xu = -\frac{C\del }{r+\rho(1-\al)-\del} y^{-\frac{r+\rho(1-\al)}{r+\rho(1-\al)-\del}}\to 0$ as $y\to+\infty$. Thus, the first boundary condition in \eqref{eq:u-FPB2} holds regardless of the value of $C$. The second boundary condition yields that $C = \frac{r+\rho(1-\al) - \del}{\del\rho}\big(\al^{-\gam} - \ys(1+\rho\xu)\big)(\ys)^{\del/(r+\rho(1-\al)-\del)}$. By substituting this value of $C$ in \eqref{eq:linearODE}, we obtain that $u$ equals the expression in \eqref{eq:u_patient_sol1} for $y>\ys$.	Therefore, \eqref{eq:u-FPB2} reduces to the following terminal-value problem:
\begin{align}\label{eq:u-FBP3}
	\begin{cases}
		\displaystyle
		\big(r+\rho -\del\big)y u'(y) + \del u(y) = \frac{\gam}{1-\gam}\big(y-\rho y u'(y)\big)^{1-\frac{1}{\gam}};
		\qquad 0< y \le \ys,\vspace{1ex}\\
		\ys-\rho \ys u'(\ys) = \al^{-\gam},\vspace{1ex}\\
		\left\{\Longleftrightarrow~~u(\ys) = \frac{r+\rho(1-\al) - \del}{\del\rho}\big(\al^{-\gam} - \ys(1+\rho\xu)\big)
		-\xu y+\frac{\al^{1-\gam}}{\del(1-\gam)}\right\}.
	\end{cases}
\end{align}
Let $\psi$ equal the inverse of $y$ obtained in (i); then, $\psi$ solves the terminal-value problem	
\begin{align}\label{eq:psi-FBP}
	\begin{cases}\displaystyle
		\psi'(y) = \frac{y-\frac{\del}{r+\rho}\psi(y)}{\frac{\rho}{r+\rho}\left(\frac{r+\rho-\del}{\rho} -\big(\psi(y)\big)^{-\frac{1}{\gam}}\right)y};\quad 0<y\le\ys,\\
		\psi\left(\ys\right) = \al^{-\gam},
	\end{cases}
\end{align}
and it is straightforward to show that $u$ defined on $(0, \ys]$ by
\begin{align}\label{eq:u_patient_sol2-aux}
	u(y) = &\frac{r+\rho(1-\al) - \del}{\del\rho}\big(\al^{-\gam} - \ys(1+\rho\xu)\big)
	-\xu \ys\\
	&+\frac{\al^{1-\gam}}{\del(1-\gam)}-\int_y^{\ys} \left(\frac{1}{\rho}-\frac{\psi(y')}{\rho y'}\right)\dd y',
\end{align}
solves \eqref{eq:u-FBP3}.

Furthermore, by defining $F(y)$ as in \eqref{eq:F} and by repeating \eqref{eq:Fcalcs}, one can show that $F\equiv0$ and, therefore, \eqref{eq:u_patient_sol2-aux} is equivalent to \eqref{eq:u_patient_sol2}. Hence, we have established that a solution of the FBP \eqref{eq:FBPDual1-riskless}--\eqref{eq:FBPDual4-riskless} is given by $\yo=+\infty$, $\ys$ as in part (i), and $u$ given by \eqref{eq:u_patient_sol1} and \eqref{eq:u_patient_sol2}.

It only remains to show that $u = u(y)$ is decreasing and strictly convex with respect to $y > 0$. For $y\ge\ys$, these properties readily follow by differentiating \eqref{eq:u_patient_sol1}.  Consider the case $0<y<\ys$.  By \eqref{eq:u_patient_sol2-aux}, we have $u'(y) = \frac{1}{\rho} - \frac{\psi(y)}{\rho y}$. By using \eqref{eq:psi-FBP}, we obtain that, for $0<y<\ys$,
\begin{align}
	u''(y) &= \frac{1}{\rho y}\left(\frac{\psi(y)}{y}-\psi'(y)\right)
	=\frac{1}{\rho y}\left(\frac{\psi(y)}{y}-\frac{y-\frac{\del}{r+\rho}\psi(y)}{\frac{\rho}{r+\rho}\left(\frac{r+\rho-\del}{\rho} -\big(\psi(y)\big)^{-\frac{1}{\gam}}\right)y}\right)\\
	&=\frac{(r+\rho)\left(\psi(y)-\frac{\rho}{r+\rho}\big(\psi(y)\big)^{1-\frac{1}{\gam}}-y\right)}{\rho^2 y^2\left(\psi_0^{-\frac{1}{\gam}} -\big(\psi(y)\big)^{-\frac{1}{\gam}}\right)}>0.
\end{align}
To obtain the last inequality, consider the cases $0<y<y_0$ and $y_0<y<\ys$ separately and apply Lemma \ref{lem:y_bounds}. Finally, $u'(y)<0$ for $0<y<\ys$, because $u(y)$ is convex and $u'(\ys)<0$, which one can see by differentiating \eqref{eq:u_patient_sol1}.

%
%
\section{Auxiliary lemmas for the proof of Theorem \ref{thm:VF-riskless}}\label{app:Transversality-riskless}

The following lemma proves the so-called \emph{transversality property} of the solution of the HJB equation and is used in the first step of the proof of Theorem \ref{thm:VF-riskless}.

\begin{lemma}\label{lem:Transversality-riskless}
	Let $c(\cdot)\in \Ac_0$, and let $\{X(t)\}_{t \ge 0}$ be the corresponding wealth-to-habit process given by \eqref{eq:X-riskless}. We, then, have
	\begin{align}\label{eq:transversality-riskless}
		\lim_{T\to+\infty}\ee^{-\del T} \, \vpt\big(X(T)\big) = 0.
	\end{align}
\end{lemma}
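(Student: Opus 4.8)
The plan is to sandwich $\ee^{-\del T}v\big(X(T)\big)$ between two expressions that both vanish as $T\to+\infty$. The two facts I would use are that (i) any admissible wealth-to-habit process grows at most exponentially, and (ii) $v$ grows at most logarithmically; since a logarithm of an exponential is linear in $T$ and $\del>0$, the discount factor then wins. For (i), let $c(\cdot)\in\Ac_0$ with $X$ given by \eqref{eq:X-riskless}; by admissibility $X(t)\ge\xu>0$ and $c(t)\ge\al$, so $\big(1+\rho X(t)\big)c(t)\ge\al\big(1+\rho X(t)\big)\ge\rho\al X(t)$, and hence from \eqref{eq:X-riskless}
\begin{align*}
X(t)\ \le\ x+\int_0^t\big(r+\rho(1-\al)\big)X(u)\,\dd u .
\end{align*}
Gronwall's inequality then gives $\xu\le X(t)\le x\,\ee^{(r+\rho(1-\al))t}$ for all $t\ge0$. (Equivalently, one may combine $W(t)\le w\,\ee^{rt}$, immediate from \eqref{eq:wealth}, with the lower bound $Z(t)\ge z\,\ee^{-\rho(1-\al)t}$ of Lemma \ref{lem:Z-Lbound}.)

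For (ii), \eqref{eq:VE-riskless} together with the free-boundary condition $(1+\rho\xs)v'(\xs)=\al^{-\gamma}$ in \eqref{eq:FBP-riskless} gives $v'(x)\le\al^{-\gamma}/(1+\rho x)$ for every $x\ge\xs$; integrating from $\xs$, and using that $v$ is increasing (Proposition \ref{prop:HJB-riskless}) to handle the compact range $\xu\le x\le\xs$, one obtains
\begin{align*}
v(\xu)\ \le\ v(x)\ \le\ v(\xs)+\frac{\al^{-\gamma}}{\rho}\ln(1+\rho x),\qquad x\ge\xu,
\end{align*}
the left inequality again being monotonicity of $v$. Combining the two displays, for all $T$ large enough that $\rho x\,\ee^{(r+\rho(1-\al))T}\ge1$ we get
\begin{align*}
v(\xu)\ \le\ v\big(X(T)\big)\ \le\ v(\xs)+\frac{\al^{-\gamma}}{\rho}\ln\!\big(2\rho x\big)+\frac{\al^{-\gamma}\big(r+\rho(1-\al)\big)}{\rho}\,T ,
\end{align*}
i.e. $v\big(X(T)\big)$ is bounded below by a constant and above by an affine function of $T$. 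Since $\del=\deltat+\lam>0$, both $\ee^{-\del T}v(\xu)$ and $\ee^{-\del T}$ times that affine function tend to $0$, so the squeeze theorem yields \eqref{eq:transversality-riskless}.

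I do not expect a genuine obstacle here. The one point that must not be overlooked is that the upper bound on $v$ in the second step has to hold on the whole half-line $[\xu,+\infty)$ — hence the separate, trivial treatment of the compact interval $[\xu,\xs]$ via monotonicity — and that it is essential to keep the factor $1/(1+\rho x)$ rather than just $v'(x)\le\al^{-\gamma}$: a merely linear bound $v(x)\le a+bx$ would only deliver \eqref{eq:transversality-riskless} when $\del>r+\rho(1-\al)$, which fails precisely for patient agents.
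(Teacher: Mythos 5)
Your proof is correct and follows essentially the same route as the paper's: an exponential Gronwall bound $X(T)\le x\,\ee^{(r+\rho(1-\al))T}$ combined with the gradient bound $v'(x)\le\al^{-\gam}/(1+\rho x)$ beyond $\xs$ from \eqref{eq:VE-riskless}, so that $v\big(X(T)\big)$ grows at most linearly in $T$ and the discount factor wins. The only difference is cosmetic: by integrating the gradient bound once to get $v(x)\le v(\xs)+\frac{\al^{-\gam}}{\rho}\ln(1+\rho x)$ on all of $[\xu,+\infty)$ and using the finite constant $v(\xu)$ as a lower bound, you handle in one stroke the cases ($\gam>1$, and $x=\xu$) that the paper treats separately.
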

\begin{proof}
	We have the following two trivial cases:
	\begin{itemize}
		\item $\gam>1$: In this case, $\frac{\al^{1-\gam}}{\delta(1-\gam)}\le \vpt\big(x\big) <0$, and \eqref{eq:transversality-riskless} immediately follows.
		\item $x=\xu$: In this case the only admissible consumption-to-habit process is $c(t)=\al$, for all $t\ge0$. The corresponding wealth-to-habit process is $X(t)=\xu$, for all $t\ge0$, which clearly satisfies \eqref{eq:transversality-riskless}.
	\end{itemize}
	Let us consider the nontrivial case in which $0<\gam<1$ and $x>\xu$. Since $c(t)\ge \al$ for $t\ge 0$, from \eqref{eq:X-riskless} we obtain
	\begin{align}
		X(T) = x + \int_0^t\Big[(r+\rho)X(u) - \big(1+\rho X(u)\big) c(u)\Big]\dd u \le x - \al T + \int_0^T \rt\,X(u)\dd u;\quad T\ge 0,
	\end{align}
	in which we have defined $\rt=r+\rho(1-\al)>0$.	Gronwall's inequality (for example, see \cite{Walter1970}, page 14) yields
	\begin{align}
		X(T) \le \xu + (x-\xu)\ee^{\rt\,T},\quad T\ge 0.
	\end{align}
	Define
	\begin{align}
		t_0 :=
		\begin{cases}
			0;&\quad x\ge \xs,\\
			\displaystyle\frac{1}{\rt}\log\left(\frac{\xs - \xu}{x-\xu}\right);&\quad \xu< x < \xs.
		\end{cases}
	\end{align}
	Note that $\xu + (x-\xu)\ee^{\rt\,t} \le \xs$ for $t<t_0$, and that $\xu + (x-\xu)\ee^{\rt\,t} > \xs$ for $t>t_0$. Since $\vpt(\cdot)$ is increasing and $0<\gam<1$, we have that, for $T\ge t_0$,
	\begin{align}
		0< \frac{\al^{1-\gam}}{\del(1-\gam)} = \vpt(\xu)
		\le \vpt \big(X(T)\big),
	\end{align}
	and
	\begin{align}
		\vpt\big(X(T)\big) &\le \vpt\Big(\xu + (x-\xu)\ee^{\rt T}\Big) = \vpt(x) + \int_0^T \vpt'\Big(\xu + (x-\xu)\ee^{\rt \,t}\Big) \, \rt\,(x-\xu)\ee^{\rt\,t}\, \dd t\\*
		&\le \vpt(x) + \int_0^{t_0} \vpt'\Big(\xu + (x-\xu)\ee^{\rt \,t}\Big) \, \rt\,(x-\xu)\ee^{\rt\,t}\, \dd t 
		+ \int_{t_0}^T \frac{\al^{-\gam}\rt\,(x-\xu)\ee^{\rt\,t}}{1 + \rho \xu + \rho (x-\xu)\ee^{\rt\,t}}\, \dd t\\*
		&= \vpt(x) + \int_0^{t_0} \vpt'\Big(\xu + (x-\xu)\ee^{\rt \,t}\Big) \, \rt\,(x-\xu)\ee^{\rt\,t}\, \dd t + \frac{\al^{-\gam}}{\rho}\, \log\left(\frac{1+\rho \xu + \rho (x-\xu)\ee^{\rt\,T}}{1+\rho \xu + \rho (x-\xu)\ee^{\rt\,t_0}}\right).
	\end{align}
 
	To get the second inequality, we used
	\begin{align}
		0<\vpt'\Big(\xu + (x-\xu)\ee^{\rt \,t}\Big)<\frac{\al^{-\gam}}{1+\rho\xu + \rho(x-\xu)\ee^{\rt \,t}};\quad t>t_0,
	\end{align}
	which follows from \eqref{eq:VE-riskless} and the fact that for $t>t_0$, one has $\xu + (x-\xu)\ee^{\rt \,t}>\xs$. Finally, we obtain that 
	\begin{align}
		0<\ee^{-\del T} \, \vpt\big(X(T)\big) \le \ee^{-\del T} \, \vpt(x)
		&+ \ee^{-\del T}\int_0^{t_0} \vpt'\Big(\xu + (x-\xu)\ee^{\rt \,u}\Big) \, \rt\,(x-\xu)\ee^{\rt\,u}\, \dd u\\*
		&+ \frac{\al^{-\gam} \ee^{-\del T}}{\rho}\, \log\left(\frac{1+\rho \xu + \rho (x-\xu)\ee^{\rt\,T}}{1+\rho \xu + \rho (x-\xu)\ee^{\rt\,t_0}}\right),
	\end{align}
	for $T\ge t_0$ and letting $T\to\infty$ yields \eqref{eq:transversality-riskless}.
\end{proof}

The following lemma is used in the second step of the proof of Theorem \ref{thm:VF-riskless}.

\begin{lemma}\label{lem:CS-comparison}
	Let $\cso$ be as in \eqref{eq:CS-sol-riskless}, and define $x_0:=\frac{r+\rho-\del}{\del\rho}$ and $c_0:=\frac{r+\rho-\del}{\rho}$. The following statements hold$:$
	\begin{enumerate}
		\item[$(i)$] If $\del\ge r+\rho(1-\al)$, then $\cso(x)>\frac{(r+\rho)x}{1+\rho x}$ for $x>\xu$.
		\item[$(ii)$] If $0< \del < r+\rho(1-\al)$, then
		\begin{itemize}
			\item $\cso(x)<\frac{(r+\rho)x}{1+\rho x}$ for $\xu\le x < x_0$.
			\item $\cso(x_0)=\frac{(r+\rho)x_0}{1+\rho x_0} = c_0$.
			\item $\cso(x)>\frac{(r+\rho)x}{1+\rho x}$ for $x>x_0$.
		\end{itemize}
	\end{enumerate}
\end{lemma}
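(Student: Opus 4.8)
The plan is to transport both inequalities through the Legendre parametrization and thereby reduce them to the one–variable estimates already established for $y=y(\psi)$. For $x>\xs$ put $y:=J(-x)\in(0,\ys)$ and $\psi:=\psi(y)$. By construction $u'(y)=\tfrac1\rho-\tfrac{\psi(y)}{\rho y}$ on $(0,\ys]$ (see \eqref{eq:u-impatient2} in the impatient case and \eqref{eq:u_patient_sol2-aux} in the patient case), so $x=-u'(y)$ gives $1+\rho x=\psi/y$, whence
\begin{equation*}
\frac{(r+\rho)x}{1+\rho x}=\frac{(r+\rho)(\psi-y)}{\rho\,\psi}
\qquad\text{and}\qquad
\cso(x)=\big(\psi(J(-x))\big)^{-1/\gam}=\psi^{-1/\gam}.
\end{equation*}
Multiplying the comparison of these two (positive) quantities by $\rho\psi/(r+\rho)>0$ yields the equivalence
\begin{equation*}
\cso(x)\ \gtrless\ \frac{(r+\rho)x}{1+\rho x}
\qquad\Longleftrightarrow\qquad
y(\psi)\ \gtrless\ \psi-\tfrac{\rho}{r+\rho}\,\psi^{1-1/\gam},\qquad x>\xs .
\end{equation*}
Since $u'$, $J$, and $\psi(\cdot)$ are strictly increasing, $\psi(\ys)=\al^{-\gam}$, and $\lim_{y\to0^+}\psi(y)=0$ (this last fact is established inside the proof of Proposition~\ref{prop:HJB-riskless}), the map $x\mapsto\psi$ is a decreasing bijection of $(\xs,\infty)$ onto $(0,\al^{-\gam})$.

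For Part~$(i)$: by Proposition~\ref{prop:HJB-riskless} we have $\xs=\xu$, so the equivalence above covers all $x>\xu$ with $\psi$ running over $(0,\al^{-\gam})$, and the claimed inequality $\cso(x)>\tfrac{(r+\rho)x}{1+\rho x}$ is then exactly $y(\psi)>\psi-\tfrac{\rho}{r+\rho}\psi^{1-1/\gam}$ on $(0,\al^{-\gam})$, which is Lemma~\ref{lem:u_is_convex}. So Part~$(i)$ is immediate.

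For Part~$(ii)$ I would first locate $x_0$ in the $\psi$–coordinate: $x=x_0$ iff $1+\rho x_0=\tfrac{r+\rho}{\del}$, i.e.\ iff $y=\tfrac{\del}{r+\rho}\psi$, and by Proposition~\ref{prop:u-solution-patient}$(i)$ — where $y(\psi)-\tfrac{\del}{r+\rho}\psi$ is negative on $(0,\psi_0)$, vanishes at $\psi_0$, and is positive on $(\psi_0,\al^{-\gam})$ — this occurs exactly at $\psi=\psi_0$. Hence, via the decreasing correspondence above, $\xs<x<x_0\Leftrightarrow\psi_0<\psi<\al^{-\gam}$, $x=x_0\Leftrightarrow\psi=\psi_0$, and $x>x_0\Leftrightarrow0<\psi<\psi_0$. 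I then treat four ranges. On $\xu\le x\le\xs$ one has $\cso(x)=\al$, whereas $g(x):=\tfrac{(r+\rho)x}{1+\rho x}$ is strictly increasing with $g(\xu)=\al$ (a one-line computation using $1+\rho\xu=\tfrac{r+\rho}{r+\rho(1-\al)}$), so $\cso(x)<g(x)$ for $\xu<x\le\xs$, with equality $\cso(\xu)=g(\xu)=\al$ at the left endpoint. For $\xs<x<x_0$ (i.e.\ $\psi_0<\psi<\al^{-\gam}$), Lemma~\ref{lem:y_bounds} gives $y(\psi)<\psi-\tfrac{\rho}{r+\rho}\psi^{1-1/\gam}$, so $\cso(x)<g(x)$. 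At $x=x_0$ (i.e.\ $\psi=\psi_0$), $\cso(x_0)=\psi_0^{-1/\gam}=\tfrac{r+\rho-\del}{\rho}=c_0$ from the definition of $\psi_0$, while $g(x_0)=c_0$ by substitution, so equality holds. For $x>x_0$ (i.e.\ $0<\psi<\psi_0$), Lemma~\ref{lem:y_bounds} gives $y(\psi)>\psi-\tfrac{\rho}{r+\rho}\psi^{1-1/\gam}$, so $\cso(x)>g(x)$. This exhausts all cases.

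Once the parametrization is in place the remaining work is essentially bookkeeping, and I do not anticipate any serious obstacle: the points needing care are the translation dictionary between $x$ and $\psi$ (strict monotonicity of $J$, $u'$, $\psi(\cdot)$, together with the endpoint matchings $\xs\leftrightarrow\al^{-\gam}$, $x_0\leftrightarrow\psi_0$, $\infty\leftrightarrow0$) and the two elementary identities $g(\xu)=\al$ and $g(x_0)=c_0$, since the genuine analytic content — the two-sided bounds on $y(\psi)$ — is already delivered by Lemmas~\ref{lem:u_is_convex} and~\ref{lem:y_bounds}. The only caveat is that $\cso(\xu)=\al=g(\xu)$, so in $(ii)$ the inequality degenerates to equality at $x=\xu$; this is immaterial for the uses of the lemma in the proof of Theorem~\ref{thm:VF-riskless}, which invoke it only for $x>\xu$.
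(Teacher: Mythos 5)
Your proof is correct and takes essentially the same route as the paper's: the Legendre dictionary $x=-u'(y)$, $u'(y)=\frac{1}{\rho}-\frac{\psi(y)}{\rho y}$ (so $1+\rho x=\psi/y$) reduces both parts to the bounds on $y(\psi)$ from Lemma \ref{lem:u_is_convex} and Lemma \ref{lem:y_bounds}, exactly as in the paper's Appendix D, with your treatment of the region $\xu\le x\le\xs$ and the identification $x_0\leftrightarrow\psi_0$ merely spelling out what the paper leaves as ``parallel.'' Your endpoint caveat is also accurate: since $\cso(\xu)=\al=\frac{(r+\rho)\xu}{1+\rho\xu}$, the first inequality of part (ii) holds only with equality at $x=\xu$ (consistent with $X^*\equiv\xu$ in Corollary \ref{cor:optimconsum_riskless}(i)), so the strict claim should be read on $\xu<x<x_0$; this is immaterial for every application of the lemma in the paper.
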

\begin{proof}
	$(i)$ For $0<y<\ys$, Lemma \ref{lem:u_is_convex} yields that $y> \psi(y)-\frac{\rho}{r+\rho}\psi(y)^{1-\frac{1}{\gam}}$. Therefore,
	\begin{align}
		\psi(y)^{-\frac{1}{\gam}}> \frac{r+\rho}{\rho}\left(1-\frac{y}{\psi(y)}\right)
		=-\frac{r+\rho}{\rho}\,\frac{y-\psi(y)}{\psi(y)} = - \frac{(r+\rho)u'(y)}{1 - \rho u'(y)};\quad 0<y<\ys,
	\end{align}
	in which the last equality follows from $\psi(y) = y - \rho y u'(y)$, as can be seen from the proof of Proposition \ref{prop:u-solution-impatient}(ii).  Finally, we obtain statement (i) by substituting $y=J(-x)$ for $x>\xu$.
	
	$(ii)$ The proof is parallel to the proof of part (i) but uses the bounds given in Proposition \ref{prop:u-solution-patient}(i).
\end{proof}

%
%
\section{Proof of Proposition \ref{prop:inverseU}}\label{app:inverseU}

We need the following lemma.

\begin{lemma}\label{lem:f}
	For $t\ge0$, we have $\frac{\dd C^{*}(t)}{\dd t}= - Z^*(t) f\big(X^*(t)\big)$, in which
	\begin{align}\label{eq:f}
		f(x):=
		\begin{cases}
			\rho\al(1-\al); &\quad \xu\le x<\xa,\\
			\rho \cs(x)\left[1+\frac{\del}{\gam}\frac{x-x_0}{1+\rho x} -\cs(x)\right]; &\quad x\ge \xa,
		\end{cases}
	\end{align}
	with $x_0=\frac{r+\rho-\del}{\del\rho}$ as in \eqref{eq:x0c0}.
\end{lemma}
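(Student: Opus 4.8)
The plan is to differentiate $C^*(t) = \cs\big(X^*(t)\big)\,Z^*(t)$ directly, using the two defining ODEs \eqref{eq:XStar-riskless} and \eqref{eq:ZSTAR2}, and then to simplify the result on each of the two regimes $\{X^*(t)<\xa\}$ and $\{X^*(t)\ge\xa\}$ separately. By the product rule,
\begin{align}
	\frac{\dd C^*(t)}{\dd t}
	= \cs'\big(X^*(t)\big)\,\frac{\dd X^*(t)}{\dd t}\,Z^*(t)
	+ \cs\big(X^*(t)\big)\,\frac{\dd Z^*(t)}{\dd t},
\end{align}
so substituting $\frac{\dd X^*}{\dd t} = (r+\rho)X^* - (1+\rho X^*)\cs(X^*)$ and $\frac{\dd Z^*}{\dd t} = -\rho Z^*\big(1-\cs(X^*)\big)$ and factoring out $-Z^*(t)$ reduces the claim to the pointwise identity
\begin{align}
	f(x) = -\cs'(x)\Big[(r+\rho)x - (1+\rho x)\cs(x)\Big] + \rho\,\cs(x)\big(1-\cs(x)\big),\qquad x\ge\xu.
\end{align}
On the region $\xu\le x<\xa$ this is immediate: there $\cs(x)\equiv\al$, hence $\cs'(x)=0$, and the right-hand side collapses to $\rho\al(1-\al)$, matching the first branch of \eqref{eq:f}.

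The substantive case is $x>\xa$, where $\cs(x) = \big(\psi(J(-x))\big)^{-1/\gam}$ with $J$ the inverse of $u'$ from Proposition \ref{prop:HJB-riskless}. The approach here is to compute $\cs'(x)$ via the chain rule through $\psi$ and $J$, then use the ODEs \eqref{eq:psi-TVP}/\eqref{eq:psi-FBP} for $\psi$ together with the identity $\psi(y) = y - \rho y u'(y)$ (established in the proof of Proposition \ref{prop:u-solution-impatient}(ii), and likewise in the patient case). Writing $y = J(-x)$ so that $u'(y) = -x$, differentiating gives $\dfrac{\dd y}{\dd x} = -\dfrac{1}{u''(y)}$, and from $\psi = y(1-\rho u'(y))$ one gets the key relations $1+\rho x = \psi(y)/y$ and $c := \cs(x) = \psi(y)^{-1/\gam}$, i.e. $\psi(y) = c^{-\gam}$. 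A cleaner route, which I expect to be the smoothest, is to differentiate the \emph{free-boundary ODE} \eqref{eq:FBPDual2-riskless} itself — or equivalently to work with the representation $x = -u'(y)$ and the relation $c^{1-\gamma}$-type terms — but most directly: from $c = \psi(y)^{-1/\gamma}$ we have $\dfrac{\dd c}{\dd y} = -\tfrac1\gamma \psi^{-1/\gamma-1}\psi'(y)$, and $\dfrac{\dd c}{\dd x} = \dfrac{\dd c}{\dd y}\Big/\dfrac{\dd x}{\dd y}$ with $\dfrac{\dd x}{\dd y} = -u''(y) = -\tfrac{1}{\rho y}\big(\tfrac{\psi(y)}{y}-\psi'(y)\big)$ (computed in the proofs of Propositions \ref{prop:u-solution-impatient} and \ref{prop:u-solution-patient}). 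Plugging these into the pointwise identity above and using $\psi'(y)$ from \eqref{eq:psi-TVP}, together with $1+\rho x = \psi/y$ and $x - x_0 = -\tfrac{1}{\rho}\big(1 - \tfrac{r+\rho-\del}{\del\rho}\cdot\tfrac{\rho}{\text{(stuff)}}\big)$-style bookkeeping, should collapse everything to $\rho c\big[1 + \tfrac{\del}{\gam}\tfrac{x-x_0}{1+\rho x} - c\big]$.

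I expect the main obstacle to be the algebraic bookkeeping in the second regime: one must carefully convert between the $y$-variable (where the ODEs for $u$ and $\psi$ live) and the $x$-variable (where $f$ is expressed), keeping track of the substitutions $u'(y) = -x$, $1+\rho x = \psi(y)/y$, $\cs(x) = \psi(y)^{-1/\gamma}$, and $x_0 = (r+\rho-\del)/(\del\rho)$, and then recognizing that the $\psi^{-1/\gamma}$ terms coming from $\psi'(y)$ reorganize precisely into the factor $\tfrac{\del}{\gam}\tfrac{x-x_0}{1+\rho x}$. A useful sanity check along the way is continuity at $x=\xa$: the second branch evaluated at $x=\xa$ gives $\rho\al\big[1+\tfrac{\del}{\gam}\tfrac{\xa-x_0}{1+\rho\xa}-\al\big]$, which need not equal $\rho\al(1-\al)$ (indeed $f$ may jump at $\xa$, consistent with $\cs$ having a corner there), so no contradiction arises — but the sign of this jump is exactly what drives the hump analysis in Proposition \ref{prop:inverseU}, so getting the constant $\tfrac{\del}{\gam}\tfrac{\xa-x_0}{1+\rho\xa}$ right is the crux.
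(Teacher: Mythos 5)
Your proposal is correct, and its first half is exactly the paper's argument: the product rule applied to $C^*=\cs(X^*)Z^*$ together with \eqref{eq:XStar-riskless} and \eqref{eq:ZSTAR2} reduces the lemma to the pointwise identity $f(x)=c^{*\prime}(x)(1+\rho x)\bigl(\cs(x)-\frac{(r+\rho)x}{1+\rho x}\bigr)+\rho\,\cs(x)\bigl(1-\cs(x)\bigr)$, and the branch $\xu\le x<\xa$ follows from $\cs\equiv\al$ there. Where you genuinely differ is the second branch. The paper stays in the primal variable: it uses the first-order condition $\vpt'(x)=\cs(x)^{-\gam}/(1+\rho x)$, differentiates it and the second equation of \eqref{eq:FBP-riskless}, and eliminates $\vpt',\vpt''$ to obtain the ODE \eqref{eq:cSTAR-ODE}, namely $\bigl(\cs(x)-\frac{(r+\rho)x}{1+\rho x}\bigr)c^{*\prime}(x)=\frac{\del\rho}{\gam}\frac{x-x_0}{(1+\rho x)^2}\cs(x)$, which substituted into the identity above gives the second branch of \eqref{eq:f}. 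You instead pass to the dual variable $y=J(-x)$ and compute through $\psi$, $u'$, $u''$ and \eqref{eq:psi-TVP}. That route does close, but as written you have deferred precisely the decisive algebra (``should collapse'', and the placeholder expression for $x-x_0$), which is the crux of the lemma; for the record it closes as follows. With $c=\cs(x)=\psi^{-1/\gam}$, $1+\rho x=\psi/y$, $x-x_0=\frac{\del\psi-(r+\rho)y}{\del\rho y}$, and the abbreviations $A:=(r+\rho)(\psi-y)-\rho\psi^{1-\frac1\gam}$, $B:=r+\rho-\del-\rho\psi^{-\frac1\gam}$, equation \eqref{eq:psi-TVP} gives $\psi'(y)=\frac{(r+\rho)y-\del\psi}{yB}$, hence $u''(y)=\frac{A}{\rho y^{2}B}$ and $c-\frac{(r+\rho)x}{1+\rho x}=-\frac{A}{\rho\psi}$, so
\begin{align}
	c^{*\prime}(x)\,(1+\rho x)\Bigl(c-\tfrac{(r+\rho)x}{1+\rho x}\Bigr)
	=\frac{\tfrac1\gam\,\psi^{-\frac1\gam-1}\,\psi'(y)}{u''(y)}\cdot\frac{\psi}{y}\cdot\Bigl(-\frac{A}{\rho\psi}\Bigr)
	=\frac{\del\psi-(r+\rho)y}{\gam\,\psi}\,\psi^{-\frac1\gam}
	=\frac{\del\rho}{\gam}\,\frac{x-x_0}{1+\rho x}\,c,
\end{align}
which is \eqref{eq:cSTAR-ODE} multiplied by $1+\rho x$, and the second branch follows. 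The trade-off: the paper's primal derivation is shorter and needs nothing beyond \eqref{eq:FBP-riskless} and \eqref{eq:vp_cs}; your dual derivation works directly with the explicitly constructed objects $\psi$ and $u$ of Propositions \ref{prop:u-solution-impatient}--\ref{prop:u-solution-patient}, at the cost of the change-of-variables bookkeeping and of invoking strict convexity of $u$ (so that $\dd x/\dd y=-u''(y)\neq 0$) to justify the chain rule through $J$. Your side remark that $f$ may jump at $\xa$ (the corner of $\cs$) is consistent with the paper's piecewise definition and does not affect the argument.
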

\begin{proof}
	By \eqref{eq:XStar-riskless} and \eqref{eq:ZSTAR2}, we have
	\begin{align}\label{eq:CSTAR_prime2}
		\frac{\dd C^{*}(t)}{\dd t}= Z^*(t)\cs^\prime\big(X^{*}(t)\big)
		\frac{\dd}{\dd t}X^*(t)+\cs\big(X^{*}(t)\big)\frac{\dd Z^*(t)}{\dd t}
		=- Z^*(t) f\big(X^*(t)\big);\quad t\ge0,
	\end{align}
	in which
	\begin{align}\label{eq:f1}
		f(x) := c^{*\prime}(x)(1+\rho x)\left(\cs(x) - \frac{(r+\rho)x}{1+\rho x}\right) + \rho \cs(x)\big(1-\cs(x)\big),
	\end{align}
	for $x\ge \xu$. It follows from \eqref{eq:CS-sol-riskless} that
	\begin{align}\label{eq:f2}
		f(x)\equiv\rho\al(1-\al);\quad \xu\le x<\xa.
	\end{align}
	For $x\ge\xa$, by \eqref{eq:cs-cand-riskless} and \eqref{eq:VE-riskless}, we have
	\begin{align}\label{eq:vp_cs}
		v'(x) = \frac{\cs(x)^{-\gam}}{1+\rho x}.
	\end{align}
	Differentiating with respect to $x$ yields
	\begin{align}\label{eq:vpp_cs}
		v''(x) = -\frac{\cs(x)^{-\gam}}{1+\rho x}\left(\frac{\gam}{\cs(x)} \cs'(x)+\frac{\rho}{1+\rho x}\right).
	\end{align}
	Furthermore,
	\begin{align}
		- (r+\rho) x \vpt'(x) + \del \vpt(x) = \frac{\gam}{1-\gam}\cs(x)^{1-\gam},
	\end{align}
	by \eqref{eq:cs-cand-riskless} and \eqref{eq:FBP-riskless}.
	By differentiating with respect to $x$ and then eliminating $v'(x)$ and $v''(x)$ via \eqref{eq:vp_cs} and \eqref{eq:vpp_cs}, we then obtain
	\begin{align}\label{eq:cSTAR-ODE}
		\left(\cs(x) - \frac{(r+\rho)x}{1+\rho x}\right) \cs'(x) = \frac{\del\rho}{\gam}\frac{x - x_0}{(1+\rho x)^2}\cs(x).
	\end{align}
	By substituting for $\cs'(x)$ from the last equation into \eqref{eq:f}, it follows that
	\begin{align}
		f(x)
		&= \rho \cs(x)\left[1+\frac{\del}{\gam}\frac{x-x_0}{1+\rho x} -\cs(x)\right];
		\quad x\ge \xa.\qedhere
	\end{align}
\end{proof}
\vspace{1ex}

To prove Proposition \ref{prop:inverseU}, we first show the sufficiency of Conditions (i) and (ii); then, we show the necessity of those conditions.\vspace{1ex}

\noindent\textbf{Sufficiency of Condition $(i)$:} Assume that $\del>r$ and let $x> \max\{\xu,x_0\}$. By \eqref{eq:f}, we have $f(x) = \rho \cs(x) g(x)$, in which
\begin{align}\label{eq:g}
	g(x) = 1+\frac{\del}{\gam}\frac{x-x_0}{1+\rho x} -\cs(x).
\end{align}
By \eqref{eq:cSTAR-ODE}, we have
\begin{align}\label{eq:g-prime}
	g'(x) 
	=\frac{\del}{\gam}\frac{1+\rho x_0}{(1+\rho x)^2} - \cs'(x)
	= \frac{\del\left(\frac{(1+\rho x_0)(r+\rho)x}{1+\rho x}+(1+\rho x)\cs(x)\right)}
	{\gam(1+\rho x)^2\left(\frac{(r+\rho)x}{1+\rho x}-\cs(x)\right)}.
\end{align}
Since $x>x_0$, Lemma \ref{lem:CS-comparison} yields $\cs(x)>\frac{(r+\rho)x}{1+\rho x}$. Thus, $g'(x)<0$ by \eqref{eq:g-prime}. Furthermore, $g(x_0) = 1-c_0 = (\del - r)/\rho >0$ and $\lim_{x\to+\infty} g(x) = \frac{\del}{\gam\rho} - \lim_{x\to+\infty} \cs(x) = -\infty$. It follows that there exists a unique $\xH>x_0$ satisfying \eqref{eq:xH} such that $g(x)>0$ (resp.\ $g(x)<0$) if $x\in(x_0, \xH)$ (resp.\ $x>\xH$).

Now, assume $X^*(0)=\frac{w}{z}>\xH$. By Corollary \ref{cor:optimconsum_riskless}, $X^*(t)$ is a decreasing process such that $\lim_{t\to+\infty} X^*(t) = x_0<\xH$.  Because $X^*(t)$ is continuous and decreasing, there is a unique $\tau_h>0$ such that $X^*(\tau_h)=\xH$, $X^*(t)>\xH$ for $t\in[0,\tau_h)$, and $x_0<X^*(t)<\xH$ for $t>\tau_h$.  From Lemma \ref{lem:f}, it follows that
\begin{align}
	\frac{\dd}{\dd t} C^*(t) = - Z^*(t)\rho \cs\big(X^*(t)\big) g\big(X^*(t)\big) >0,
\end{align}
for $t\in[0,\tau_h)$, and
\begin{align}
	\frac{\dd}{\dd t} C^*(t) = - Z^*(t)\rho \cs\big(X^*(t)\big) g\big(X^*(t)\big) <0,
\end{align}
for $t>\tau_h$. In particular, the graph of $t\mapsto C^*(t)$ is hump-shaped and attains its maximum at $\tau_h$, as claimed.\vspace{1ex}

\noindent\textbf{Sufficiency of Condition $(ii)$:} Assume $r<\del<r+\rho(1-\al)$, and note that  $\xu<\xa<x_0$ by Proposition \ref{prop:HJB-riskless}. Let $x\in(\xa, x_0)$, and define $g(x)$ by \eqref{eq:g}. From Lemma \ref{lem:CS-comparison}, we have $\cs(x)<\frac{(r+\rho)x}{1+\rho x}$. Thus, \eqref{eq:g-prime} yields $g'(x)>0$. From $g(x_0)= (\del - r)/\rho >0$ and $g(\xa)<0$ (by Condition (ii)), it follows that there exists a unique constant $\xH'\in(\xa, x_0)$ satisfying \eqref{eq:xHp} such that $g(x)>0$ (resp.\ $g(x)<0$) if $x\in(\xH', x_0)$ (resp.\ $x\in(\xa, \xH')$).

Next, assume that $X^*(0)=\frac{w}{z}\in(\xa,\xH')$. Since $X^*(0)<x_0$, Corollary \ref{cor:optimconsum_riskless} yields that $X^*(t)$ is increasing and $\lim_{t\to+\infty} X^*(t) = x_0>\xH'$. Since $X^*(t)$ is continuous and increasing, there is a unique $\tau_h'>0$ such that $X^*(\tau_h')=\xH'$, $\xa<X^*(t)<\xH'$ for $t\in[0,\tau_h')$, and $\xH'<X^*(t)<x_0$ for $t>\tau_h'$. From Lemma \ref{lem:f}, it follows that
\begin{align}
	\frac{\dd}{\dd t} C^*(t) = - Z^*(t)\rho \cs\big(X^*(t)\big) g\big(X^*(t)\big) >0,
\end{align}
for $t\in[0,\tau_h')$, and
\begin{align}
	\frac{\dd}{\dd t} C^*(t) = - Z^*(t)\rho \cs\big(X^*(t)\big) g\big(X^*(t)\big) <0,
\end{align}
for $t>\tau_h'$. In particular, the graph of $t\mapsto C^*(t)$ is hump-shaped and attains its maximum at $\tau_h'$, as claimed.\vspace{1ex}

\noindent\textbf{Necessity of Conditions $(i)$ and $(ii)$:} By Proposition \ref{prop:HJB-riskless}, $\xa=\xu\ge x_0$ $($resp.\ $\xu<\xa<x_0)$ if $\del\ge r+\rho(1-\al)$ $($resp.\ $0<\del <r+\rho(1-\al))$. Therefore, Conditions $(i)$ and $(ii)$ are false if and only if one of the following six scenarios is true: 
\begin{enumerate}
	\item[(a)] $r\ge \del$,
	
	\item[(b)] $\del\ge r+\rho(1-\al)$ and $\xu\le w/z \le \xH$,
	
	\item[(c)] $r <\del<r+\rho(1-\al)$ and $x_0\le w/z \le \xH$,
	
	\item[(d)] $r <\del<r+\rho(1-\al)$ and $\xu\le w/z \le \xa$,
	
	\item[(e)] $r <\del<r+\rho(1-\al)$, $1+\frac{\del}{\gam}\frac{\xa-x_0}{1+\rho \xa} - \al < 0$, and $\xH'\le w/z < x_0$,
	
	\item[(f)] $r <\del<r+\rho(1-\al)$, $1+\frac{\del}{\gam}\frac{\xa-x_0}{1+\rho \xa} -\al \ge 0$.
\end{enumerate}
Thus, to show the necessity of Conditions (i) and (ii) for the presence of a consumption hump, it suffices to show that $t\mapsto C^*(t)$ is not hump-shaped in scenarios (a)-(f) above.

Under scenario (a), we have $\del\le r\le r+\rho(1-\al)$. From Corollary \ref{cor:optimconsum_riskless}, we know $\lim_{t\to+\infty}X^*(t)= x_0>\xa$ and $\cs(x_0)=c_0:=(r+\rho-\del)/\rho$. Therefore, $\lim_{t\to+\infty} f\big(X^*(t)\big) = \rho c_0 (1-c_0) = c_0 (\del-r)\le 0$ by \eqref{eq:f}. From Lemma \ref{lem:f}, it follows that there exists a constant $T$ such that $\frac{\dd C^{*}(t)}{\dd t}\ge 0$ for $t\ge T$. Since $t\mapsto C^*(t)$ is asymptotically non-decreasing, it cannot be hump-shaped.

In scenarios $(b)$ and $(c)$, we can apply the argument used for the proof of sufficiency of Condition (i) to conclude that $t\mapsto C^*(t)$ is decreasing and, thus, not hump-shaped.

In scenario (d), $f(x)=\rho\al(1-\al)>0$ by \eqref{eq:f}. Lemma \eqref{lem:f} then yields that $\frac{\dd}{\dd t} C^{*}(0)<0$. Thus, $t\mapsto C^*(t)$ is initially decreasing and cannot be hump-shaped.

Finally, in scenarios (e) and (f), we can apply the argument used for the proof of sufficiency of Condition (ii) to conclude that $t\mapsto C^*(t)$ is decreasing and, thus, not hump-shaped.

To end the proof of Proposition \ref{prop:inverseU}, it only remains to show its last statement. The first part of the statement is clear. The second part (that is, Conditions $(ii)$ fails if $\gam > 1-\frac{\del-r}{\rho(1-\al)}$) follows from the inequality
\begin{align}
	1+\frac{\del}{\gam}\frac{\xa-x_0}{1+\rho \xa} - \al
	>1+\frac{\del}{\gam}\frac{\xu-x_0}{1+\rho \xu} - \al
	=\frac{1-\al}{\gam}\left(\gam - 1+\frac{\del-r}{\rho(1-\al)}\right).
\end{align}
To get the inequality, we used $\xa>\xs$ and that $\frac{x-x_0}{1+\rho x}$ is increasing in $x$. To get the equality, we used the definitions $x_0=\frac{r+\rho-\del}{\del \rho}$ and $\xu=\frac{\al}{r+\rho(1-\al)}$.
\end{document}